\newtheorem{thm}{Theorem}[section]
\newtheorem{lem}[thm]{Lemma}
\newtheorem{defi}[thm]{Definition}
\newtheorem{rem}[thm]{Remark}
\newtheorem{prop}[thm]{Proposition}
\newtheorem{cor}[thm]{Corollary}
\newtheorem*{cor-non}{Corollary}
\newcommand{\End}{\mathrm{End}}
\newcommand{\Hom}{\mathrm{Hom}}
\newcommand{\Fr}{\mathrm{Fr}}
\newcommand{\GL}{\mathrm{GL}}
\newcommand{\minim}{\mathrm{min}}
\newcommand{\Katz}{\mathrm{Katz}}
\newcommand{\et}{\mathrm{et}}
\newcommand{\Ver}{\mathrm{Ver}}
\DeclareMathOperator{\Norm}{N}
\DeclareMathOperator{\Spec}{Spec}
\DeclareMathOperator{\Image}{im}
\DeclareMathOperator{\Gal}{Gal}
\DeclareMathOperator{\Frob}{Frob}
\DeclareMathOperator{\KS}{KS}
\DeclareMathOperator{\tr}{tr}
\newcommand{\ilim}[1]{\underset{#1}{\underrightarrow{\lim} \;}}
\newcommand{\diam}[1]{{\langle #1 \rangle}}
\newcommand{\cA}{\mathcal{A}}
\newcommand{\cB}{\mathcal{B}}
\newcommand{\cC}{\mathcal{C}}
\newcommand{\cF}{\mathcal{F}}
\newcommand{\cI}{\mathcal{I}}
\newcommand{\cO}{\mathcal{O}}
\newcommand{\cX}{\mathcal{X}}
\newcommand{\rG}{\mathrm{G}}
\newcommand{\rH}{\mathrm{H}}
\newcommand{\rI}{\mathrm{I}}
\newcommand{\rN}{\mathrm{N}}
\newcommand{\fa}{\mathfrak{a}}
\newcommand{\fb}{\mathfrak{b}}
\newcommand{\fc}{\mathfrak{c}}
\newcommand{\fd}{\mathfrak{d}}
\newcommand{\fm}{\mathfrak{m}}
\newcommand{\fn}{\mathfrak{n}}
\newcommand{\fo}{\mathfrak{o}}
\newcommand{\fp}{\mathfrak{p}}
\newcommand{\fq}{\mathfrak{q}}
\newcommand{\fr}{\mathfrak{r}}
\newcommand{\CC}{\mathbb{C}}
\newcommand{\FF}{\mathbb{F}}
\newcommand{\GG}{\mathbb{G}}
\newcommand{\NN}{\mathbb{N}}
\newcommand{\QQ}{\mathbb{Q}}
\newcommand{\TT}{\mathbb{T}}
\newcommand{\ZZ}{\mathbb{Z}}
\newcommand{\Qbar}{\overline{\QQ}}
\newcommand{\Fbar}{\overline{\FF}}
\newcommand{\sigmabar}{{\overline{\sigma}}}
\newcommand{\Sigmabar}{{\overline{\Sigma}}}
\newcommand{\rhobar}{{\overline{\rho}}}
\newcommand{\tautilde}{{\widetilde{\tau}}}
\newcommand{\ps}{{\mathrm{ps}}}
\newcommand{\ord}{{\mathrm{ord}}}
\newcommand{\cusp}{{\mathrm{cusp}}}
\renewcommand{\ord}{{\mathrm{ord}}}
\newcommand{\kum}{{\mathrm{Kum}}}
\newcommand{\DP}{{\mathrm{DP}}}
\newcommand{\Ra}{{\mathrm{Ra}}}
\newcommand{\AG}{{\mathrm{AG}}}
\newcommand{\RX}{{\mathrm{RX}}}
\date{\today}
\begin{document}

\begin{abstract}
We prove that the Galois pseudo-representation valued in the mod $p^n$ cuspidal Hecke algebra for $\GL(2)$ over a totally real number field $F$, of parallel weight $1$  and level prime to $p$,  is unramified at any place above~$p$. The same is true for the non-cuspidal Hecke algebra at  places above~$p$ whose ramification index is not 
divisible by  $p-1$.  A novel geometric ingredient, which is also of independent interest, is the construction and study, in the case when $p$ ramifies in $F$, of generalised $\Theta$-operators using Reduzzi--Xiao's generalised Hasse invariants, including especially an injectivity criterion in terms of minimal weights.
\end{abstract}

\subjclass[2010]{Primary: 11F80; Secondary: 11F25, 11F33, 11F41, 11G18, 14G35}

\title{Unramifiedness of weight $1$ Hilbert Hecke algebras}
\author{Shaunak V. Deo}
\address{Department of Mathematics, Indian Institute of Science, Bangalore 560012, India}
\email{shaunakdeo@iisc.ac.in}
\author{Mladen Dimitrov}
\address{University of Lille, CNRS, UMR 8524 -- Laboratoire Paul Painlev\'e, 59000 Lille, France.}
\email{mladen.dimitrov@gmail.com }
\author{Gabor Wiese}
\address{University of Luxembourg, Department of Mathematics, Maison du nombre, 6~avenue de la Fonte, L-4364 Esch-sur-Alzette, Luxembourg }
\email{gabor.wiese@uni.lu}
\maketitle

\footnotesize
\tableofcontents
\normalsize

\section*{Introduction}

Starting with Wiles \cite{Wiles} and Taylor--Wiles \cite{TaylorWiles}, $R=\TT$ theorems have been developed and taken a role as cornerstones in number theory. They provide both the existence of Galois representations with values in Hecke algebras satisfying prescribed local properties and modularity lifting theorems.
The state of $R=\TT$ theorems for $2$-dimensional  representations in residual characteristic~$p$ of the absolute Galois group $\rG_\QQ$ of $\QQ$ and Hecke algebras acting on elliptic modular forms is quite satisfactory. In particular, the notoriously difficult case of Galois representations that are unramified at an odd prime~$p$ 
has been settled by ground-breaking work of Calegari and Geraghty \cite{CaGe}, in which they show that  those correspond to modular forms of weight $1$. 
More precisely, given an odd irreducible representation $\bar\rho: \rG_\QQ \to \GL_2(\Fbar_p)$ unramified outside 
a finite set  of places $S$ not containing $p$, they show that 
\[R^S_{\QQ,\bar\rho}\xrightarrow{\sim} \TT^{(1)}_{\bar\rho},\]
where $R^S_{\QQ,\bar\rho}$ is the universal deformation ring parametrising deformations of $\bar\rho$ 
which are unramified outside $S$ and $\TT^{(1)}_{\bar\rho}$ is 
the local component at $\bar\rho$ of a weight $1$ Hecke algebra of a certain level prime to~$p$.

In this article, we address the corresponding question for parallel weight $1$ Hilbert modular forms over  a totally real field~$F$ of degree $d= [F:\QQ] \geqslant 2$ and  ring of integers $\fo$. 
We focus on the construction of the Galois (pseudo-)representation with values in the parallel weight $1$ Hecke algebra with $p$-power torsion coefficients and proving its local ramification properties.
In particular, given a finite set $S$ of places in $F$ relatively prime to~$p$ and a totally odd irreducible representation $\bar\rho: \rG_F \to \GL_2(\Fbar_p)$ unramified outside $S$ we show that there exists a surjective homomorphism 
\[R^S_{F,\bar\rho}\twoheadrightarrow \TT^{(1)}_{\bar\rho},\]
where $R^S_{F,\bar\rho}$ is the universal deformation ring parametrising deformations of $\bar\rho$ 
which are unramified outside $S$  and $\TT^{(1)}_{\bar\rho}$ is 
the local component at $\bar\rho$ of a weight $1$ Hecke algebra of a certain level prime to $p$ (see Corollary~\ref{cor:RT} for a precise statement). 

Let  $M_\kappa(\fn,R)$ be the $R$-module of   Hilbert modular forms of 
parallel weight $\kappa\geqslant  1$ and prime to $p$ level $\fn$ over a $\ZZ_p$-algebra $R$, as in Definition~\ref{defi:HMFWiles}.
This $R$-module is equipped with a commuting family of Hecke operators $T_\fq$ as well as with diamond operators $\langle \fq \rangle$ for all  primes $\fq$ of~$F$ not dividing~$\fn$.
Let $K/\QQ_p$ be a finite extension containing the images of all embeddings of $F$ in $\Qbar_p$, and let  $\cO$ be its 
valuation ring, $\varpi$ a uniformiser and $\FF = \cO/\varpi$ its residue field.
We put $M_\kappa(\fn ,K/\cO) = \ilim{n} M_\kappa(\fn ,\cO/\varpi^n)$ and  define the parallel weight $1$ Hecke algebra 
\[ \TT^{(1)}  =\Image\big(\cO[T_\fq, \diam{\fq}]_{\fq \nmid \fn p} \to \End_\cO(M_1(\fn ,K/\cO))\big),\]
as well as its cuspidal quotient $\TT^{(1)}_\cusp$ acting faithfully on the submodule of parallel weight $1$  cuspforms. 
We can now state the main results of this article. Let $p\fo = \prod_{\fp \mid p} \fp^{e_\fp}$ 
with $e_\fp\geqslant 1$. We emphasise that there is no restriction on the ramification of~$p$ in~$F$.

\begin{thm}\label{thm:main}
There exists  a $\TT^{(1)}$-valued pseudo-representation $P^{(1)}$ of $\rG_F$ of degree $2$ which is unramified at all primes $\fq$ not dividing $\fn p$ and satisfies $P^{(1)}(\Frob_\fq)=(T_\fq, \diam{\fq})$.

Moreover, if $p-1$ does not divide $e_\fp$ for some $\fp \mid p$, then $P^{(1)}$ is also unramified at $\fp$ and satisfies
$P^{(1)}(\Frob_\fp)=(T_\fp, \diam{\fp})$, in particular $T_{\fp}\in \TT^{(1)}$.  

Finally, the pseudo-representation $P_\cusp^{(1)}$ obtained after composing $P^{(1)}$ with the natural surjection 
$\TT^{(1)}\to \TT_\cusp^{(1)}$ is unramified at all  $\fp \mid p$  and satisfies $P_\cusp^{(1)}(\Frob_\fp)=(T_\fp, \diam{\fp})$. 
\end{thm}

The strategy of the proof is  based on the {\em doubling} method developed in \cite{Wi},  further simplified and conceptualised in \cite{DiWi} and  \cite{CaGe}. The parallel weight $1$ Hilbert modular forms over $\cO/\varpi^n$ can be mapped into some higher weight in two ways, per prime $\fp$ dividing $p$, either by multiplication by a suitable power of the total Hasse invariant, or  by applying a $V$-operator. 
That  \emph{doubling} map is used by Calegari--Specter in  \cite{CaSp} to prove an analogue of Theorem~\ref{thm:main} 
when $F=\QQ$, for which they successfully develop the notion of a $p$-{\em ordinary}  pseudo-representation. 
In that case, one knows by a result of Katz that the doubling map is  injective. 
Furthermore,  the existence of the  Hecke operator $T_p$ acting on weight $1$ modular forms and the knowledge of its precise effect on the $q$-expansion (both due to Gross) allow one to show that the image of the doubling map is contained in the $p$-ordinary part of the higher weight space.

The existence of an optimally integral Hecke operator $T_\fp$ acting on  parallel weight $1$ Hilbert modular forms with arbitrary coefficients having the desired effect on their $q$-expansions (see \cite{DiKS} improving on and correcting previous works such as \cite{ERX} and \cite{DiWi}) allows us to adapt  the overall   Calegari--Specter strategy to the Hilbert modular setting, while slightly generalising and clarifying some aspects of their arguments (see  \S\ref{s:pseudo-CS}), the main challenge being to prove the injectivity of the doubling map. Note that the 
simple calculation  in \cite{DiWi} showing  that injectivity after restriction to an eigenspace is  insufficient as the Hecke algebra modulo $p$ need not be semi-simple. 
Instead, we observe that the  injectivity of the doubling map would follow from the injectivity of a certain generalised $\Theta$-operator, introduced in the  foundational work \cite{AG} of Andreatta--Goren for Hilbert modular forms in characteristic $p$ defined over the Deligne--Pappas moduli space. When $p$ is unramified in $F$, the theory of partial $\Theta$-operators was also developed by Diamond--Sasaki  \cite{DS} in a more general setting using a slightly different approach from that of \cite{AG}.
However, when $p$ is ramified in $F$, the results of \cite{DS} do not apply, while those of \cite{AG} are not sufficiently precise for our purposes, as the Hilbert modular forms defined over the Deligne--Pappas model `miss' some weights, and as a consequence the injectivity result of the latter paper is not optimal. 
In order to tackle this problem, we go back to the root of the problem and work with the Pappas--Rapoport moduli space, which does not miss any weight.

Capitalising on the theory of generalised Hasse invariants developed by Reduzzi--Xiao \cite{RX}
in this context, we carefully  revisit  \cite{AG} and  develop in \S\ref{sec:theta} the needed theory of generalised  $\Theta$-operators  over the Pappas--Rapoport moduli space and  prove a refined injectivity criterion in terms of the minimal weights. In particular, we show that the generalised $\Theta$-operators are indeed injective on parallel weight $1$  Hilbert modular forms provided their weight is minimal at~$\fp$.
By the recent works \cite{DK,DK2} of Diamond and Kassaei (see \S\ref{sec:min}) weight $1$ Hilbert modular forms having `non-minimal'
weight at~$\fp$ could only possibly exist when $p-1$  divides $e_\fp$, and are products of forms of partial weight $0$  at~$\fp$ with generalised  Hasse invariants. 

In order to show the vanishing of the space of Katz cuspforms of partial weight $0$ at~$\fp$, and thus complete the proof of 
the second and third parts of the Theorem, in \S\ref{sec:partial-frob} we construct a  partial Frobenius endomorphism $\Phi_{\fp^e}$ of this space and show that it is simultaneously injective and nilpotent.
Our construction is inspired by the one in  Diamond--Sasaki \cite[\S9.8]{DS} in the case when $p$ is unramified in $F$.  
We also compute its effect on $q$-expansions, which is crucially used in our proof and,  in order to avoid having to switch between different cusps, we only study the partial Frobenius operator of an  appropriate power of $\fp$, rather than of $\fp$ itself. 

In the language of linear representations, we prove the following result, which can be seen as a first step towards an $R=\TT$ theorem.

\begin{cor-non}[Corollary~\ref{cor:RT}]
For every non-Eisenstein maximal ideal  $\fm$ of $\TT^{(1)}$ (see Definition \ref{def:eis}) there exists a   representation
\[\rho_\fm: \rG_F \to \GL_2(\TT^{(1)}_{\fm}),\]
unramified at all primes $\fq$ not dividing $\fn$ such that  $\tr(\rho_\fm(\Frob_\fq)) =T_\fq$ and   $\det(\rho_\fm(\Frob_\fq)) = \diam{\fq}$. 
\end{cor-non}

We believe that our modest  contribution to the theory of generalised $\Theta$-operators in the setting of the Pappas--Rapoport splitting model
is worthwhile on its own, beyond the application to our main theorem.
On our way to the injectivity criterion, we also explore some related themes, such as the relation between Hilbert modular forms defined over the Pappas--Rapoport model with those defined over the Deligne--Pappas model, and the $q$-expansion and vanishing loci of the generalised Hasse invariants defined by Reduzzi--Xiao. 
We hope that it bridges the gap between many existing references in the literature and also clarifies some important aspects of the theory of mod $p$ Hilbert modular forms.
In the meantime, motivated by geometric Serre weight conjectures, 
Diamond \cite{D} extended the techniques of \cite{DS} to also construct partial $\Theta$-operators which have an optimal effect on weights in the case where $p$  ramifies in $F$.  Moreover, Diamond generalised in \cite{D} the construction of the partial Frobenius operators (our partial Frobenius operator $\Phi_{\fp^e}$ is essentially Diamond's  $V_{\fp}^e$). Note that Diamond also describes kernels of partial $\Theta$-operators in terms of images of his partial Frobenius maps $V_{\fp}$ (see \cite[Thm. 9.1.1]{D}).
However, our construction is less technical because we restrict to the Rapoport locus and we only consider the case of weights $0$ at~$\fp$. 

\subsection*{Acknowledgements}
{\small
The authors are indebted to Fabrizio Andreatta, Adel Betina, Frank Calegari, Fred Diamond, Payman Kassaei, Sheng-Chi Shih and Liang Xiao for many clarifying explanations and discussions.
The debt to the works of Andreatta--Goren \cite{AG} and Reduzzi--Xiao~\cite{RX} is evident.

The authors would particularly like to thank the anonymous referee for the careful reading pointing out several inaccuracies and for the insightful comments which helped tremendously improve the manuscript.

The research leading to this article is jointly funded by the Agence National de Recherche ANR-18-CE40-0029 and the Fonds National de Recherche Luxembourg INTER/ANR/18/12589973 in the project {\it Galois representations, automorphic forms and their L-functions} (GALF).

The first two authors would like to thank IISER Pune and ICTS Bangalore, where part of the work was done, for their hospitality, namely during the  program {\it  Perfectoid spaces } (Code: ICTS/perfectoid2019/09).
The first author was partially supported by a Young Investigator Award from the Infosys Foundation, Bangalore and also by the DST FIST program - 2021 [TPN - 700661]. 
 }

\subsection*{Notation}
Throughout the paper, we will use the  following  notation.
We let $F$ be a totally real number field of degree $d \geqslant  2$ and  ring of integers $\fo$. 
We denote by $\Qbar\subset \CC$ the subfield of algebraic numbers and denote by 
$\rG_F = \Gal(\Qbar /F)$ the absolute Galois group of $F$. For every prime $\fq$ of $F$ we denote by $\Frob_\fq \in \rG_F$ a fixed choice of an arithmetic Frobenius at~$\fq$. Let $\fp$ be a prime of $F$ dividing $p$. 
Fixing an  embedding $\iota_p$  of $\Qbar$ into a fixed algebraic closure $\Qbar_p$ of $\QQ_p$ 
allows one to see the absolute Galois group $\rG_{F_\fp}=\Gal(\Qbar_p/F_\fp)$  of $F_\fp$
as a decomposition subgroup of $\rG_F$ at $\fp$, and we let $\rI_\fp$ denote its inertia subgroup. 
Furthermore, we fix a  finite extension $K/\QQ_p$ containing the images of all embeddings of $F$ in $\Qbar_p$, and let  $\cO$ be its 
valuation ring, $\varpi$ a uniformiser and $\FF = \cO/(\varpi)$ its residue field.

For a prime $\fp$ of $F$ dividing $p$, denote the residue field of $F_{\fp}$ by $\FF_{\fp}$ and the ring of Witt vectors of $\FF_{\fp}$ by $W(\FF_{\fp})$.  We also let $f_\fp$ and $e_\fp$ denote the residue and the ramification index of $\fp$, respectively.
Let $\Sigma$ be the set of infinite places of~$F$, which we view as embeddings $F \hookrightarrow \Qbar_p$ via $\iota_p$.
We have a natural partitioning $\Sigma = \coprod_{\fp \mid p} \Sigma_\fp$ where $\Sigma_\fp$ contains exactly those embeddings
inducing the place~$\fp$.
For $\sigma \in \Sigma_\fp$, we denote by $\sigmabar$ its restriction to  the maximal unramified subfield of $F_\fp$ or, equivalently, the induced embedding of $\FF_{\fp(\tau)}$ into $\Fbar_p$.
Furthermore, we let $\Sigmabar_\fp = \{ \sigmabar \;|\; \sigma \in \Sigma_\fp\}$ and $\Sigmabar = \{ \sigmabar \;|\; \sigma \in \Sigma\} = \coprod_{\fp \mid p} \Sigmabar_\fp$.
As a general rule, elements of $\Sigma$ will be called~$\sigma$ whereas $\tau$ usually designates an element of~$\Sigmabar$.
In both cases, $\fp(\sigma)$ and $\fp(\tau)$ denotes the underlying prime ideal.
When either $\sigma$ or $\tau$ is clear from the context, we will just denote this prime ideal by $\fp$. 
In particular, an element $\tau \in \Sigmabar_\fp$ denotes both an embedding $\FF_{\fp(\tau)} \hookrightarrow \FF$ and the corresponding $p$-adic one
$W(\FF_{\fp(\tau)}) \hookrightarrow \cO$.
Denoting the absolute arithmetic Frobenius on $\FF$ by $\phi$, we have $\Sigmabar_\fp = \{ \phi^j  \circ \tau \;|\; j \in \ZZ\} \simeq
\ZZ/f_\fp \ZZ \simeq \Gal(\FF_\fp/\FF_p)$ for any choice $\tau \in \Sigmabar_\fp$.
For any $\tau \in \Sigmabar_\fp$, we let $\Sigma_\tau = \{ \sigma \in \Sigma_\fp \;|\; \sigmabar = \tau\} = \{\sigma_{\tau,i} \;|\; 1 \leqslant i \leqslant e_\fp\}$, where
the numbering is chosen in an arbitrary, but fixed way.
As an abbreviation, we write $\tautilde  = \sigma_{\tau,e_\fp}$.

Let $\cC$ be a fixed set of representatives, all relatively prime to $p$,  for the   narrow class group of~$F$.

\section{Hilbert modular forms in finite characteristic}\label{section1}

This section refines the theory of $\Theta$-operators developed by Andreatta--Goren in \cite{AG}, when $p$ ramifies in~$F$, in the setting of Hilbert modular forms defined over the Pappas--Rapoport splitting models for Hilbert modular varieties with the aim of proving the injectivity of the doubling map in~\S\ref{sec:doubling}.
Along the way, we will need the generalised Hasse invariants of Reduzzi and Xiao \cite{RX}, results of Diamond and Kassaei \cite{DK, DK2} about minimal weights as well as a partial Frobenius operator generalised from~\cite{DS}.

Throughout this  section we  fix   an ideal  $\fn$ of~${\fo}$  relatively prime to~$p$ and  having a prime factor which does not divide $6\fd$, where      $\fd$ denotes the different of $F$.

\subsection{Pappas--Rapoport splitting models for Hilbert modular varieties}
\label{prsection}

Since we allow our base field $F$ to ramify at~$p$, we have to be careful with the model we choose for our Hilbert modular variety. 

Fix $\fc \in \cC$. We first consider the functor from the category of locally Noetherian $\ZZ_p$-schemes to the category of sets which assigns to a scheme $S$ the set of isomorphism classes of tuples $(A,\lambda,\mu)$ where:
\begin{enumerate}[(i)]
\item $A$ is  a {\em Hilbert-Blumenthal Abelian Variety (HBAV)} over $S$, {\it i.e.},  an abelian $S$-scheme of relative dimension $d$, together with a ring embedding ${\fo} \hookrightarrow \End_S(A)$.
\item $\lambda$ is a $\fc$-polarisation of $A/S$, {\it i.e.}, an isomorphism $\lambda : A^{\vee} \to A \otimes_{{\fo}} \fc$ of HBAV's over $S$ such that the induced isomorphism $\Hom_{{\fo}}(A, A \otimes_{{\fo}}\fc) \simeq \Hom_{{\fo}}(A,A^{\vee})$ sends elements of $\fc$ (resp.  
 of the cone  $\fc_+$   of its totally positive elements) 
to symmetric elements (resp. to polarisations),
\item $\mu$ is a $\mu_{\fn}$-level structure on $A$, {\it i.e.}, an ${\fo}$-linear closed embedding of $S$-schemes $\mu : \mu_{\fn} \to A$,
where $\mu_{\fn}$ denotes the Cartier dual of the constant group scheme ${\fo}/\fn$ over $S$.
\end{enumerate}
Under our assumption on $\fn$ above, this functor is representable  by a  $\ZZ_p$-scheme $\cX^{\DP}$ of finite type, 
called the  {\em Deligne--Pappas moduli space} (see \cite[Rem.~3.3]{AG} and \cite[Lem.~1.4]{DiTi}).

Suppose now that $A$ is an HBAV over a locally Noetherian $\cO$-scheme $S$ with structure map $s : A \to S$ 
and let $\Omega^1_{A/S}$ be the sheaf of relative differentials of $A$ over $S$. Define
\[\omega_S = s_*\Omega^1_{A/S},\]
{\it i.e.}, $\omega_S$ is the sheaf of invariant differentials of $A$ over~$S$.
Consider the decomposition
\begin{align}\label{eq:dec-O}
{\fo} \otimes_{\ZZ} \cO_S
= ({\fo} \otimes_{\ZZ} \ZZ_p) \otimes_{\ZZ_p} \cO_S =\prod_{\fp \mid p} \fo_{\fp} \otimes_{\ZZ_p} \cO_S
=\prod_{\tau \in \Sigmabar}\fo_{\fp(\tau)} \otimes_{W(\FF_{\fp(\tau)}),\tau} \cO_S.
\end{align}
 It implies that  we have a corresponding decomposition
\begin{align}\label{eq:dec-omega}
\omega_S = \bigoplus_{\tau \in \Sigmabar} \omega_{S,\tau}.
\end{align}
The sheaf $\omega_{S,\tau}$ is locally free over $S$ of rank $e_{\fp(\tau)}$ (see \cite[\S2.2]{RX}).
Note that on $\omega_{S,\tau}$, the action of $W(\FF_{\fp(\tau)}) \subset \fo_{\fp(\tau)}$ is via~$\tau$.
Fix a uniformiser $\varpi_{\fp(\tau)}$ of $\fo_{\fp(\tau)}$.
From the product decomposition above, we get an action of $\fo_{\fp(\tau)}$ on $\omega_{S,\tau}$.
Denote the action of $\varpi_{\fp(\tau)}$ on $\omega_{S,\tau}$ by $[\varpi_{\fp(\tau)}]$.
\smallskip

We are now ready to present the  Pappas--Rapoport model. 
Consider the functor from the category of locally Noetherian $\cO$-schemes to the category of sets which assigns to a scheme $S$ the set of isomorphism classes of tuples $(A,\lambda,\mu,(\cF_\fp)_{\fp \mid p})$ where $(A,\lambda,\mu)$ is as above and for all $\fp \mid p$, $\cF_\fp$ is a collection $(\cF_\tau^i)_{\tau \in \Sigmabar_\fp, 0 \leqslant  i \leqslant  e_\fp}$ of $\fo\otimes \cO_S$-modules, which are locally free as $\cO_S$-modules, such that:
\begin{itemize}
\item $0 = \cF^0_\tau \subset \cdots \subset \cF^{e_\fp}_\tau = \omega_{S,\tau}$,
\item for any $\sigma =\sigma_{\tau,i} \in \Sigma_\tau$, the $\cO_S$-module 
$\omega_{S,\tau,i}=\omega_{S,\sigma}=\cF^i_\tau/\cF^{i-1}_\tau$ is locally free of rank $1$ and annihilated by $[\varpi_\fp] - \sigma(\varpi_\fp)$.
Note that the numbering here depends on the one for $\Sigma_\tau$.
\end{itemize} 
This functor is representable by a smooth  $\cO$-scheme $\cX$ of finite type called   the {\em Pappas--Rapoport moduli space} (see \cite[Prop.~2.4]{RX} and \cite[Lem.~1.4]{DiTi}).

In order to better understand the relation between the Deligne--Pappas and the Pappas--Rapoport moduli spaces, 
we recall that the {\em Rapoport locus} $\cX^\Ra$  is the open subscheme of $\cX^{\DP}$ classifying HBAV's  $s : A \to S$ satisfying the following condition introduced by Rapoport: $s_*\Omega^1_{A/S}$ is a locally free ${\fo} \otimes_{\ZZ} \cO_S$-module of rank~$1$.
Then $\cX^\Ra$ is the smooth locus of~$\cX^{\DP}$ and its complement  is supported in the special fibre and has codimension at least $2$ in it. The forgetful map $\cX \to \cX_{\cO}^{\DP}$ induces an isomorphism on the open subscheme $\cX_{\cO}^\Ra$ (see \cite[Prop.~2.4]{RX}). If $p$ is unramified in $F$, the different schemes agree: $\cX=\cX_{\cO}^\Ra=\cX_{\cO}^{\DP}$ (see \cite[\S1]{RX}).

Let $\cA$ be the universal abelian scheme over $\cX$ with structure morphism $s : \cA \to \cX$.
Let $\omega_{\cX} = s_*\Omega^1_{\cA/\cX}$.
Note that the restriction of $\omega_{\cX}$ to $\cX_{\cO}^\Ra$ is a locally free sheaf of rank $1$ over ${\fo} \otimes_{\ZZ} \cO_{\cX^\Ra_{\cO}}$.
As abbreviation we write $\omega, \omega_\tau, \omega_{\tau,i}, \omega_\sigma$ for $\omega_{\cX}, \omega_{\cX,\tau}, \omega_{\cX,\tau,i}, \omega_{\cX,\sigma}$.
In particular, for each $\tau \in \Sigmabar$, the sheaf $\omega_\tau$ is equipped with a filtration the graded pieces of which are the invertible sheaves $\omega_\sigma$ for $\sigma \in \Sigma_\tau$. In \cite{RX} this is referred to as the {\em universal filtration}.
We point out explicitly that the last graded piece $\omega_\tautilde$ is a quotient of $\omega_\tau$.

Next we give, following Katz, a geometric definition of the space of Hilbert modular forms.

\begin{defi}\label{defi:HMF}
A {\em Katz Hilbert modular form} of weight $k = \sum_{\sigma \in \Sigma} k_{\sigma}\sigma \in \ZZ[\Sigma]$, level~$\fn$
and coefficients in an $\cO$-algebra $R$ is a global section of the line bundle
$\omega^{\otimes k}=\bigotimes_{\sigma \in \Sigma}\omega^{\otimes k_{\sigma}}_{\sigma}$ over $\cX\times_\cO R$.
We will denote  by $M_k^\Katz(\fc,\fn;R)$ the corresponding $R$-module.

Its $R$-submodule of cuspforms $S_k^\Katz(\fc,\fn;R)$ consists of those Katz Hilbert modular forms that vanish along the cuspidal divisor of any toroidal compactification of $\cX\times_\cO R$ (see \cite[\S2.11]{RX}). 
\end{defi}

As $\cX$ admits toroidal compactifications (see \cite[\S2.11]{RX}) which are smooth and proper over~$\cO$ and to which $\omega_\sigma$ extends for all~$\sigma \in \Sigma$,  the Koecher principle implies, in view of \cite[\href{https://stacks.math.columbia.edu/tag/02O5}{Tag 02O5}]{stacks-project},  that $M_k^\Katz(\fc,\fn;R)$ is a finitely generated $R$-module.

\begin{rem}\label{rem:HMF-parallel}
When the weight $k\in \ZZ[\Sigma]$ is \emph{parallel}, {\it i.e.},  $k_\sigma = \kappa\in \ZZ$ for all $\sigma \in \Sigma$, 
one also could define a {\em Katz Hilbert modular form} of parallel weight  $\kappa\in \ZZ$, level~$\fn$ and coefficients in a $\ZZ_p$-algebra $R$
as a global section of the line bundle $\left(\bigwedge^d s_* \Omega^1_{\cA/\cX^{\DP}}\right)^{\otimes\kappa}$ 
over $\cX^{\DP}\times_{\ZZ_p} R$. By Zariski's Main Theorem applied to the proper birational map $\cX \to \cX_{\cO}^{\DP}$ between normal varieties, this would lead to the same space as in  Definition~\ref{defi:HMF}.
\end{rem}

\subsection{Generalised Hasse invariants}\label{subsec:Hasse}

From this point onwards we will work over~$\FF$.
Let $X$ be the Pappas--Rapoport moduli space over~$\FF$, {\it i.e.}, the special fibre $\cX\times_\cO\FF$ of~$\cX$.
There is a natural morphism $X \to \cX^{\DP}\times_{\ZZ_p}\FF$ obtained by forgetting the filtrations.
Let $X^\Ra=\cX^\Ra\times_{\ZZ_p}\FF$. We have the equality
\begin{align}\label{eq:prod}
{\fo} \otimes_{\ZZ} \FF =\prod_{\fp\mid p}\fo_{\fp} \otimes_{\ZZ_{p}} \FF \simeq\prod_{\tau\in \Sigmabar} \fo_{\fp(\tau)} \otimes_{W(\FF_{\fp(\tau)}),\tau} \FF = \prod_{\tau\in \Sigmabar} \FF[x]/(x^{e_{\fp(\tau)}}),
\end{align}
coming from~\eqref{eq:dec-O}. Note that the last equality of \eqref{eq:prod} depends on the choice of the uniformiser $\varpi_{\fp(\tau)}$ of $\fo_{\fp(\tau)}$, made in the previous subsection for every $\tau \in \Sigmabar$, and allows us to view $\omega_{\tau}$ as an $\cO_{X}[x]/(x^{e_\fp})$-module. If $S$ is a locally Noetherian $\FF$-scheme and $A$ is an HBAV over $S$ satisfying the Rapoport condition, then $\omega_{S,\tau}$ is a locally free $\cO_S[x]/(x^{e_{\fp(\tau)}})$-module of rank~$1$. Hence, there is a unique filtration on $\omega_{S,\tau}$ satisfying the Pappas--Rapoport conditions given by
$x^{e_{\fp(\tau)}-i}\omega_{S,\tau}$ for $0 \leqslant  i \leqslant  e_{\fp(\tau)}$. 
We point out again that the definition of $\cX$ depends on the numbering of the embeddings in $\Sigma_\tau$ fixed above,
but that $X$ is independent of any such choice (see also \cite[Rem.~2.3]{RX}).

If $\fp \mid p$ and $\tau \in \Sigmabar_{\fp}$, then suppose the universal filtration on $\omega_\tau$ is given by $(\cF^i_\tau)_{1 \leqslant i \leqslant e_{\fp}}$.
We now recall Reduzzi--Xiao's constructions of generalised Hasse invariants $h_{\sigma}$ given in \cite{RX}.
Let $\fp \mid p$ and $\tau \in \Sigmabar_\fp$ and assume first that  $2\leqslant  i \leqslant e_\fp$.  There is a map $\cF^{i}_{\tau} \to \cF^{i-1}_{\tau}$ which sends a local section $z$ of $\cF^{i}_{\tau}$ to the section $x\cdot z$ of $\cF^{i-1}_{\tau}$, where the action of $x$ is given by  
$[\varpi_{\fp(\tau)}]$. Hence, we get a map $\cF^{i}_{\tau}/\cF^{i-1}_{\tau} \to \cF^{i-1}_{\tau}/\cF^{i-2}_{\tau}$ inducing a section $h_{\tau,i}= h_{\sigma_{\tau,i}}$ of $\omega_{{\tau,i-1}} \otimes \omega_{\tau,i}^{-1} $ over $X$.
This $h_{\sigma}$ is the {\em generalised Hasse invariant at $\sigma = \sigma_{\tau,i}$} (see \cite[Construction 3.3]{RX} and \cite[\S2.11]{ERX} for more details).
As $(\omega_{\tau})_{|X^\Ra}$ is a locally free sheaf over $\cO_{X^\Ra}[x]/(x^{e_\fp})$ of rank~$1$, we have $(\cF^{i}_{\tau})_{|X^\Ra} = (x^{e_\fp-i}\omega_{\tau})_{|X^\Ra}$.  It follows that $h_{\tau,i}$ is a nowhere vanishing section over 
 $X^\Ra$ and  multiplication by $h_{\tau,i}$ induces an isomorphism between $(\omega_{\tau,i})_{|X^\Ra}$ and $(\omega_{\tau,i-1})_{|X^\Ra}$.

For the case $i=1$, the {\em generalised Hasse invariant} $h_{\tau,1}$ is defined
as a global section  over $X$ of $\omega^{\otimes p}_{{\phi^{-1}\circ\tau,e_\fp}}\otimes\omega^{\otimes-1}_{{ \tau,1}}$
(see \cite[Construction 3.6]{RX} for more details). We let $h_{\tau} =\prod_{\sigma \in \Sigma_\tau} h_{\sigma}=
\prod_{i=1}^{e_\fp}h_{\tau,i}$. It is a modular form of weight $p\cdot \widetilde{\phi^{-1} \circ \tau} - \widetilde{\tau}$.

\begin{rem}\label{rem:Hasse}
Let $A$ be the universal abelian scheme over $X$ and $\Ver: A^{(p)} \to A$ be the Verschiebung morphism, where $A^{(p)} = A \times_{\FF,\phi} \FF$.  It induces maps $\omega_{\tau} \to \omega^{(p)}_{\phi^{-1}\circ\tau}$ and further $\cF^{e_\fp}_{\tau}/\cF^{e_\fp-1}_{\tau} \to (\cF^{e_\fp}_{\phi^{-1}\circ\tau}/\cF^{e_\fp-1}_{\phi^{-1}\circ\tau})^{(p)}$. Note that $\cF^{e_\fp}_{\tau}/\cF^{e_\fp-1}_{\tau} =\omega_{\tau, e_\fp}$, $(\cF^{e_\fp}_{\phi^{-1}\circ\tau}/\cF^{e_\fp-1}_{\phi^{-1}\circ\tau})^{(p)} =\omega^{\otimes p}_{ \phi^{-1}\circ\tau,e_\fp}$ and the resulting  section of 
$\omega^{\otimes p}_{\phi^{-1}\circ \tau,e_\fp} \otimes \omega^{\otimes-1}_{\tau, e_\fp}$ over $X$ 
is precisely given by $h_{\tau}$ (see \cite[Lem.~3.8]{RX}). Moreover, its restriction to $X_\FF^\Ra$ 
coincides with Andreatta--Goren's {\em partial Hasse invariant} constructed in \cite[Def.~7.12]{AG}.  
In particular, when $p$ is unramified in $F$, the generalised Hasse invariants constructed by Reduzzi--Xiao are the same as the partial Hasse invariants constructed by Andreatta--Goren.
\end{rem}

We will now determine the geometric $q$-expansions of these generalised Hasse invariants. We will mostly follow conventions of \cite[\S8]{dimdg}.
Let  $\infty_{\fc}$ be the standard infinity cusp whose  Tate object  is given by $(\mathbb{G}_m \otimes_{\ZZ}\fc^*)/q^{\fo}$ (see \cite[\S2.3]{DiWi}). 
Here $\fc^*=\fc^{-1}\fd^{-1}$.
Let $X^{\wedge}$  be the formal completion of a toroidal compactification of $X$ along the divisor at the cusp  $\infty_{\fc}$ (see \cite[Thm.~8.6]{dimdg}). By {\it loc.~cit.}, the pull back of $\omega$ to $X^{\wedge}$ is canonically isomorphic to $\cO_{X^{\wedge}}\otimes \fc$. Choosing an identification 
\begin{align}\label{eq:identification}
\FF\otimes\fc\xrightarrow{\sim} \FF\otimes\fo
\end{align}
one can canonically identify  $\omega_{\tau|X^{\wedge}}$ with   $\tau(\cO_{X^{\wedge}}\otimes \fo)=\cO_{X^{\wedge}}[x]/(x^{e_{\fp(\tau)}})$ (see \eqref{eq:prod}). 
A global section of $\omega_{\tau}$ over $X^{\wedge}$ is an element of 
\[ \left\{ \sum_{\xi \in \fc_+ \cup \{0\}}a_{\xi}q^{\xi} \;|\; a_{\xi} \in \FF[x]/(x^{e_{\fp(\tau)}}) \text{ and } a_{u^2 \xi} = \tau(u)a_{\xi}, \forall  u \in \fo^{\times}, 
u-1\in \fn\right\},\]
whereas a section $z$ of $\omega_{\tau,i}$ over $X^{\wedge}$ is an element of 
\[ \left\{ x^{e_{\fp(\tau)}-i}\cdot \sum_{\xi \in \fc_+ \cup \{0\}}b_{\xi}q^{\xi} \;|\; b_{\xi} \in \FF \text{ and } b_{u^2 \xi} = \tau(u)b_{\xi}, \forall  u \in \fo^{\times}, 
u-1\in \fn\right\}\]
whose $q$-expansion is given by $\sum_{\xi \in \fc_+ \cup \{0\}}b_{\xi}q^{\xi}$ with respect to the choice of basis of $\omega_{\tau,i|X^\wedge}$ corresponding to $x^{e_{\fp(\tau)}-i}$.

\begin{lem}\label{qexplem} Let  $\fp | p$, $\tau \in \Sigmabar_{\fp}$.  Then for every $1 \leqslant  i \leqslant  e_{\fp}$, the geometric $q$-expansion of the generalised Hasse invariant $h_{\tau,i}$ at $\infty_{\fc}$ is $1$.
 In particular, it does not vanish at any cusp.
\end{lem}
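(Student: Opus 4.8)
The plan is to carry out the entire computation on the formal completion $X^{\wedge}$ at the cusp $\infty_{\fc}$, where all the relevant line bundles are canonically trivialised, and to reduce the statement to checking that the sheaf maps defining the $h_{\tau,i}$ carry canonical generator to canonical generator. Recall from \eqref{eq:identification} and the discussion following it that, after the chosen identification $\FF\otimes\fc\xrightarrow{\sim}\FF\otimes\fo$, one has $\omega_{\tau|X^{\wedge}}\simeq\cO_{X^{\wedge}}[x]/(x^{e_{\fp}})$ with canonical generator $1$, coming from the canonical invariant differential on the Tate object. Since the geometric $q$-expansion of a section is by definition its image under this trivialisation, it suffices to track canonical generators through Reduzzi--Xiao's constructions.

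Since $\omega_{|X^{\wedge}}\simeq\cO_{X^{\wedge}}\otimes\fc$ is free of rank one over $\fo\otimes\cO_{X^{\wedge}}$, the completion $X^{\wedge}$ lies over the Rapoport locus $X^{\Ra}$, so the universal filtration on $\omega_{\tau|X^{\wedge}}$ is the canonical Rapoport filtration $\cF^{i}_{\tau}=x^{e_{\fp}-i}\omega_{\tau}$. Consequently the graded piece $\omega_{\tau,i}=\cF^{i}_{\tau}/\cF^{i-1}_{\tau}$ is freely generated by the image of $x^{e_{\fp}-i}$; in particular the top piece $\omega_{\tau,e_{\fp}}$ is generated by the image of $1$ and the bottom piece $\omega_{\tau,1}=\cF^{1}_{\tau}$ by the image of $x^{e_{\fp}-1}$. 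This identifies the canonical generators to be tracked.

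For $2\leqslant i\leqslant e_{\fp}$ the section $h_{\tau,i}$ is, by \cite[Construction 3.3]{RX}, induced by multiplication by $x$ on $\omega_{\tau}$. Under the trivialisation this map sends $x^{e_{\fp}-i}\mapsto x^{e_{\fp}-i+1}$, hence carries the canonical generator of $\omega_{\tau,i}$ to the canonical generator of $\omega_{\tau,i-1}$. Viewed as a section of $\omega_{\tau,i-1}\otimes\omega_{\tau,i}^{-1}$ it is therefore the constant $1$, which settles these cases; note that they are purely formal, involving only the $\cO_{X^{\wedge}}[x]/(x^{e_{\fp}})$-module structure.

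The case $i=1$ is the crux, as it is where the Frobenius twist and the Verschiebung enter. By \cite[Construction 3.6]{RX} (see also Remark~\ref{rem:Hasse}), $h_{\tau,1}$ is obtained from the map $\Ver^{*}\colon\omega_{\tau}\to\omega^{(p)}_{\phi^{-1}\circ\tau}$ induced by the Verschiebung of the universal abelian scheme, restricted to $\cF^{1}_{\tau}=\omega_{\tau,1}$ and composed with the projection onto the top graded piece $(\omega_{\phi^{-1}\circ\tau,e_{\fp}})^{(p)}=\omega^{\otimes p}_{\phi^{-1}\circ\tau,e_{\fp}}$. The main obstacle is to compute this on the Tate object: I would use the explicit description of Verschiebung on the multiplicative-type Tate object $(\mathbb{G}_{m}\otimes_{\cO}\fc^{*})/q^{\cO}$, for which the connected--\'etale decomposition shows that $\Ver^{*}$ carries the canonical differential to the Frobenius twist of the canonical differential. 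Keeping track of the $\fo$-action and the splitting \eqref{eq:prod}, this forces the induced map $\omega_{\tau,1}\to\omega^{\otimes p}_{\phi^{-1}\circ\tau,e_{\fp}}$ to send the canonical generator $x^{e_{\fp}-1}$ to $1^{\otimes p}$, so that $h_{\tau,1}=1$. This is the Hilbert-modular, possibly ramified, refinement of the classical fact that the Hasse invariant of the ordinary Tate curve has $q$-expansion $1$, and spelling out the compatibility of $\Ver^{*}$ with the decomposition \eqref{eq:prod} is the one genuinely computational point in the proof.
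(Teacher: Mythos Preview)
Your treatment of the cases $2\leqslant i\leqslant e_{\fp}$ matches the paper's proof essentially verbatim: both track the canonical generator $x^{e_{\fp}-i}$ through multiplication by $x$ over $X^{\wedge}$ and observe that the $q$-expansion is preserved.

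For $i=1$ you take a genuinely different route. The paper does \emph{not} compute the Verschiebung directly; instead it argues multiplicatively: by Remark~\ref{rem:Hasse} the product $h_{\tau}=\prod_{i=1}^{e_{\fp}}h_{\tau,i}$ coincides over $X^{\Ra}$ with Andreatta--Goren's partial Hasse invariant, whose $q$-expansion is known to be $1$ by \cite[Prop.~7.14]{AG}, so dividing by the already-established factors $h_{\tau,i}$ for $i\geqslant 2$ forces $h_{\tau,1}$ to have $q$-expansion $1$. This is slick and avoids any computation with $\Ver^{*}$, at the cost of importing a result from \cite{AG}. Your approach, by contrast, would give a self-contained argument by explicitly evaluating $\Ver^{*}$ on the Tate object; this is in spirit exactly how \cite[Prop.~7.14]{AG} is itself proved, so you are essentially reproving that proposition in the Pappas--Rapoport language rather than citing it. Both are valid; the paper's is shorter, yours more transparent about what is really happening.

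One caution: your description of $h_{\tau,1}$ as ``$\Ver^{*}$ restricted to $\cF^{1}_{\tau}$ and projected to the top graded piece of $\omega^{(p)}_{\phi^{-1}\circ\tau}$'' should be checked against \cite[Construction 3.6]{RX}; what Remark~\ref{rem:Hasse} asserts is that the map induced by $\Ver^{*}$ on \emph{top} graded pieces gives the product $h_{\tau}$, not $h_{\tau,1}$. Over the Rapoport locus these differ exactly by $\prod_{i\geqslant 2}h_{\tau,i}$, so your computation still goes through, but the identification needs to be made carefully.
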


\begin{proof}  
When $i>1$, as  $x\cdot z$ is a  section of $\omega_{\tau,i-1}$ having by definition the same 
$q$-expansion, one concludes that $h_{\tau,i}$ has $q$-expansion  $1$, thus proving the claim in that case.  
In the remaining case of $i=1$, we observe that the $q$-expansion of $h_{\tau} = \prod_{i=1}^{e_\fp(\tau)} h_{\tau,i}$  at $\infty_{\fc}$ is $1$ by Remark~\ref{rem:Hasse} and \cite[Prop.~7.14]{AG}.
Hence the $q$-expansion of $h_{\tau,1}$ at $\infty_{\fc}$ is $1$.
 Finally, since the $h_{\tau,i}$ can be defined in any level, we deduce their non-vanishing at all cusps from the non-vanishing at~$\infty_\fc$.
\end{proof}

We now collect some properties of the generalised Hasse invariants that will be used in the sequel.
Let $Z_{\sigma}\subset X$ be  the  divisor  of $h_{\sigma}$ and, in order to shorten the notation, we let 
$Z_{\tau,i}=Z_{\sigma_{\tau,i}}$.

\begin{lem}\label{lem:complement}
The complement of $X^\Ra$ in $X$ coincides with $\bigcup_{\tau\in \Sigmabar}\bigcup_{i=2}^{e_{\fp(\tau)}} Z_{\tau,i}$. 
Moreover, for any $I\subseteq \Sigma$, the intersection $\bigcap_{\sigma\in I}  Z_{\sigma}$ is, either empty, or equidimensional of dimension $d-|I|$. In particular, the zero loci of two different generalised Hasse invariants do not have a common divisor. 
\end{lem}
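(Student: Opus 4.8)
The claim in Lemma~\ref{lem:complement} has three parts, and I would attack them in order of increasing geometric subtlety.

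\textbf{Plan for the first assertion.} The plan is to identify the non-Rapoport locus with a union of Hasse divisors by working étale-locally and reading off the structure of $\omega_\tau$. Recall that $X^\Ra$ is precisely the locus where $\omega$ is locally free of rank~$1$ over $\fo\otimes_\ZZ\cO_X$, equivalently where each $\omega_\tau$ is locally free of rank~$1$ over $\cO_X[x]/(x^{e_{\fp(\tau)}})$ (using~\eqref{eq:prod}). On such a locus the filtration is forced to be $\cF^i_\tau = x^{e_\fp-i}\omega_\tau$, and by the discussion preceding the lemma the maps $h_{\tau,i}$ for $2\leqslant i\leqslant e_\fp$ are then nowhere vanishing. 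Conversely, I would argue that if all of $h_{\tau,2},\dots,h_{\tau,e_\fp}$ are invertible at a point, then multiplication by $x$ maps each graded piece $\omega_{\tau,i}$ isomorphically to $\omega_{\tau,i-1}$, which lets one build a local generator of $\omega_\tau$ as a cyclic $\cO_X[x]/(x^{e_\fp})$-module and hence verify the Rapoport condition there. This gives the set-theoretic equality $X\setminus X^\Ra = \bigcup_{\tau}\bigcup_{i=2}^{e_{\fp(\tau)}} Z_{\tau,i}$; note the index genuinely starts at $i=2$, since $h_{\tau,1}$ (the Verschiebung-type invariant of Remark~\ref{rem:Hasse}) is unrelated to the failure of freeness and its divisor lies inside the Rapoport locus.

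\textbf{Plan for the dimension assertion.} For the equidimensionality of $\cap_{\sigma\in I} Z_\sigma$, the clean approach is to exhibit the $h_\sigma$ as a regular sequence locally, so that each further intersection drops the dimension by exactly~$1$. Since $\cX$ (hence $X$) is a smooth $\cO$-scheme of relative dimension~$d$, the special fibre $X$ is a smooth $\FF$-variety of dimension~$d$, so it is Cohen--Macaulay and in particular all its local rings are equidimensional and catenary. Each $Z_\sigma$ is an effective Cartier divisor (the vanishing locus of a single section $h_\sigma$ of a line bundle), so by Krull's principal ideal theorem every component of $\cap_{\sigma\in I} Z_\sigma$ has codimension at most $|I|$, giving dimension $\geqslant d-|I|$. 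The matching upper bound—and the equidimensionality—is where I expect the real work to be: I would use that the sections $h_\sigma$ indexed by distinct $\sigma$ involve ``independent'' graded pieces $\omega_{\tau,i-1}\otimes\omega_{\tau,i}^{-1}$, and argue that at every point of the intersection they form part of a regular system of parameters, so that $\cO_X/(h_\sigma : \sigma\in I)$ is Cohen--Macaulay of the expected dimension and therefore equidimensional of dimension exactly $d-|I|$.

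\textbf{The main obstacle, and the final clause.} The hardest step is precisely this local-regular-sequence claim, i.e.\ checking that the $h_\sigma$ cut transversally. The geometric input I would lean on is an explicit local description of $X$ near the non-ordinary strata coming from the Pappas--Rapoport/splitting model—the local model is a product over $\tau$ of affine charts in which the $h_{\tau,i}$ appear as distinct coordinate functions—so that their vanishing loci meet with normal crossings. Granting this, the last sentence (``the zero loci of two different generalised Hasse invariants do not have a common divisor'') is the case $I=\{\sigma,\sigma'\}$ with $\sigma\neq\sigma'$: the intersection $Z_\sigma\cap Z_{\sigma'}$ is equidimensional of dimension $d-2$, hence contains no divisor (which would have dimension $d-1$), so $Z_\sigma$ and $Z_{\sigma'}$ share no common component. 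I would present the transversality via the local model computation rather than re-deriving it, citing the smoothness of $\cX$ from~\cite[Prop.~2.4]{RX} together with the splitting structure of $\omega_\tau$.
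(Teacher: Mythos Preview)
Your plan for the first assertion is fine (and is essentially what \cite[Prop.~2.13(2)]{ERX}, which the paper simply cites, unpacks). One small slip: your parenthetical claim that the divisor of $h_{\tau,1}$ ``lies inside the Rapoport locus'' is not correct---indeed, once the second assertion is proved, $Z_{\tau,1}$ meets every $Z_{\tau,i}$ for $i\geqslant 2$, and the latter lie in $X\setminus X^\Ra$. What is true (and all you need) is that the invertibility of $h_{\tau,1}$ is irrelevant to the Rapoport condition.

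The genuine gap is in your plan for the second assertion. Your Krull argument bounds the codimension of every irreducible component of $\cap_{\sigma\in I}Z_\sigma$ from above, and your regular-sequence/local-model argument would bound it from below \emph{at every point of the intersection}; together these would give equidimensionality of dimension $d-|I|$ \emph{provided the intersection is non-empty}. But nothing in your outline produces a single closed point lying on all the $Z_\sigma$ simultaneously, and on the open (non-proper) variety $X$ there is no Bezout-type principle to fall back on. The local-model description is an \'etale-local picture near points you already have; it does not manufacture a point where all the $h_\sigma$ vanish. In the paper this non-emptiness is the heart of the proof: one starts from $A=E\otimes_\ZZ\fo^*$ with $E$ a supersingular elliptic curve (which already lies on every $Z_{\tau,1}\cap X^\Ra$), then follows Andreatta--Goren-style computations to pass to carefully chosen $\fo$-stable quotients $A^{(1)},A^{(2)},\dots$ and equip them with suitable filtrations, eventually landing in $\bigcap_{\sigma\in\Sigma}Z_\sigma$. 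Only after exhibiting such a point does the paper invoke the tangent-space computation of \cite[Thm.~3.10]{RX}---which is exactly the transversality input you were aiming for---to conclude the correct dimension.

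So your transversality step matches the paper's, but you are missing the existence step entirely, and that is where the work lies.
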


\begin{proof}
The first claim has  been established in \cite[Prop.~2.13 (2)]{ERX}. 
For the second, if $\cap_{\sigma\in I}  Z_{\sigma}$ is non-empty, then  the tangent space computation in  \cite[Thm.~3.10]{RX}  ensures the correct dimension.
\end{proof}

\begin{rem}\label{nonemptyrem}
Diamond and Kassaei also prove Lemma~\ref{lem:complement} and obtain in addition the non-emptiness of the intersection (see \cite[Prop.~5.8]{DK2}). Here we sketch a  constructive proof, following ideas of Andreatta and Goren~\cite{AG-IMRN},
if $e_{\fp(\tau)}$ is odd for all $\tau \in \Sigmabar$. 
 
Let $A = E \otimes_{\ZZ}\fo^*$, where $E$ is a supersingular elliptic curve over $\FF$.
We see, as in  \cite[Proof of Thm.~10.1]{AG-IMRN}, that $\omega_{A,\tau} \simeq \FF[x]/(x^{e_{\fp(\tau)}})$ for all $\tau \in \Sigmabar$.
Let  $\Frob_A : A \to A^{(p)}$ be the Frobenius map and $H = \ker(\Frob_A)[\prod_{\fp \mid p}\fp^{[e_{\fp}/2]}]$. 
By imitating the calculations of \cite[\S8]{AG-IMRN} (more specifically \cite[Prop.~6.5, Lemmas 8.6, 8.9, Prop.~8.10]{AG-IMRN}), one sees that if $A^{(1)} = A/H$, then
\begin{align}\label{eq:alternate}
\omega_{A^{(1)},\tau} \simeq x^{[e_{\fp(\tau)}/2]}\cdot \FF[x]/(x^{e_{\fp(\tau)}}) \bigoplus x^{e_{\fp(\tau)}-[e_{\fp(\tau)}/2]} \cdot \FF[x]/(x^{e_{\fp(\tau)}}) \text{ for all } \tau \in \Sigmabar.
\end{align}

Note that $A^{(1)}$ is a $\fc'$-polarised HBAV over $\FF$ for some $\fc' \in \cC$.
Let $\fa \subset \fo$ be an ideal  relatively prime to $p$ such that $\fa\fc'$ and $\fc$ represent the same element in the narrow class group of $F$.
Let $H^{(1)}$ be an $\fo$-invariant subgroup scheme of $A^{(1)}[\fa]$ isomorphic to $\fo/\fa$ and let $A^{(2)} = A^{(1)}/H^{(1)}$.
By \cite[\S 1.9]{KL}, $A^{(2)}$ is a $\fc$-polarised HBAV over $\FF$ and since $\fa$ is relatively prime to $p$, we have $\omega_{A^{(2)}} = \omega_{A^{(1)}}$.
Endowing each $\omega_{A^{(2)},\tau}$ with the `alternating' filtration between the two summands in \eqref{eq:alternate} yields a point in $\bigcap_{\tau \in \Sigmabar} \bigcap_{i=2}^{e_{\fp(\tau)}} Z_{\tau,i}$, showing that the latter is non-empty. 

If $e_{\fp(\tau)}$ is odd, then the  filtration on $\omega_{A^{(2)},\tau}$ described above is unique. 
Moreover, as $A^{(2)}$ is supersingular (i.e. its $p$-torsion subgroup has no \'{e}tale component), the map $\omega_{A^{(2)},\tau} \to \omega_{A^{(2)},\phi^{-1} \circ \tau}$ induced by the Verschiebung morphism is the zero map.
Hence, we conclude, using the structure of $\omega_{A^{(2)},\tau}$ and the definition of the Hasse invariant $h_{\tau,1}$, that any such point also lies in $Z_{\tau,1}$. Thus, if $e_{\fp(\tau)}$ is odd for all $\tau \in \Sigmabar$, then we get a point in $\bigcap_{\tau \in \Sigmabar} \bigcap_{i=1}^{e_{\fp(\tau)}} Z_{\tau,i}$.
\end{rem}

We illustrate the weights of the generalised and partial Hasse invariants in Table~\ref{fig:weights}, where we let $\tau \in \Sigmabar$ and write $e = e_{\fp(\tau)}$ as abbreviation.
\begin{figure}[h!]\label{fig:weights}
\caption{Weights of Hasse invariants.}
\begin{longtable}{||p{1.5cm}||*{3}{p{.8cm}|}|*{5}{p{.8cm}|}|*{3}{p{.8cm}|}|}
\hline
 & \multicolumn{11}{|c||}{Weights}\\
\cline{2-12}
 & \multicolumn{3}{|c||}{$\phi^{-1}\circ\tau$} & \multicolumn{5}{|c||}{$\tau$} & \multicolumn{3}{|c||}{$\phi\circ\tau$} \\
\cline{2-12}
 & $\cdots$ & $e-1$ & $e$ & $1$ & $2$ & $\cdots$ & $e-1$ & $e$ & $1$ & $2$ & $\cdots$ \\
\hline
$h_{\phi^{-1}\circ\tau,e}$ && $1$ & $-1$ &&&&&&&&\\
$h_{\tau,1}$ && & $p$ & $-1$ &&&&&&& \\
$h_{\tau,2}$ && && $1$ & $-1$ &&&&&&\\
$\vdots$ && &&  & $\ddots$ & $\ddots$ &&&&&\\
$h_{\tau,{e-1}}$ && && && $1$ & $-1$ &&&&\\
$h_{\tau,e}$ && && &&& $1$ & $-1$ &&&\\
$h_{\phi\circ\tau,1}$ && &&&&&&$p$ & $-1$ &&\\
$h_{\phi\circ\tau,2}$ && &&&&&&&$1$ & $-1$ &\\
\hline
$h_\tau$ && & $p$ &&&&&$-1$&&  &\\
\hline
\end{longtable}
\end{figure}

One of the advantages of Definition~\ref{defi:HMF} is that it allows us to define  mod $p$ 
Hilbert modular forms in {\it any} weight $k=\sum_{\sigma \in \Sigma} k_\sigma \sigma\in \ZZ[\Sigma]$, while the definition in~\cite{AG} was {\it missing} some weights when 
$p$ ramifies in~$F$, namely theirs are indexed by $\Sigmabar$, instead of $\Sigma$. Indeed, the space of modular forms 
introduced by  Andreatta and Goren \cite[Prop.~5.5]{AG} is 
\begin{align}\label{eq:AG-MF}
 M_{\bar k}^\AG(\fc,\fn;\FF) = \rH^0(X^\Ra,\bigotimes_{\tau \in \Sigmabar} \omega_{\widetilde{\tau}}^{k_\tau}),
\text { where } \bar k = \sum_{\tau \in \Sigmabar}k_{\tau}\tau \in \ZZ[\Sigmabar]. 
\end{align}
We will denote by $S_{\bar k}^\AG(\fc,\fn;\FF)$ the subspace of $M_{\bar k}^\AG(\fc,\fn;\FF)$ consisting of cuspforms,  which are defined as modular forms such that the constant coefficient of the $q$-expansion at every cusp vanishes.
If $k = \sum_{\sigma \in \Sigma}k_{\sigma}\sigma \in \ZZ[\Sigma]$, then for every $\tau \in \Sigmabar$, let $k_\tau =  \sum_{\sigma \in \Sigma_\tau} k_\sigma$ and define $\bar k := \sum_{\tau \in \Sigmabar}k_{\tau} \tau \in \ZZ[\Sigmabar]$.
We let
\begin{align}\label{eq:Ht}
H_k^\RX =\prod_{\tau\in \Sigmabar}\prod_{i=2}^{e_{\fp(\tau)}} h_{\tau,i}^{\sum_{j=1}^{i-1}k_{{\tau,j}}}, 
\end{align}
where $k_{\tau,j}=k_{\sigma_{\tau,j}}$. 
In view of the table of weights of the generalised Hasse invariants, for every $\tau \in \Sigmabar$,
the $(\tau,i)$-component of the weight of $f/H_k^\RX$ is $0$ if $1 \leqslant i \leqslant e_{\fp(\tau)}-1$ and the $\tautilde=(\tau,e_{\fp(\tau)})$-component is
$k_\tau$.
Since $H_k^\RX$ is invertible on  $X^\Ra$, we obtain the following result.

\begin{lem}\label{lem:AGtoRX} The restriction from $X$ to $X^\Ra$ yields an  injection of $M_k^\Katz(\fc,\fn;\FF)$ into $M_{\bar k}^\AG(\fc,\fn;\FF)$ sending $f$ to $f / H_k^\RX$. 
\end{lem}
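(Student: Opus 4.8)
The plan is to exhibit the map explicitly and show it is well-defined and injective by reducing everything to the open Rapoport locus $X^\Ra$, where the relation between the Pappas--Rapoport and Andreatta--Goren models is transparent. First I would recall the two definitions being compared: an element $f \in M_k^\Katz(\fc,\fn;\FF)$ is a global section of $\omega^{\otimes k}=\bigotimes_{\sigma \in \Sigma}\omega_\sigma^{\otimes k_\sigma}$ over all of $X$, whereas an element of $M_k^\AG(\fc,\fn;\FF)$ is, by~\eqref{eq:AG-MF}, a section over the open subscheme $X^\Ra$ of $\bigotimes_{\tau \in \Sigmabar}\omega_{\tautilde}^{\otimes k_\tau}$, with $k_\tau = \sum_{\sigma\in\Sigma_\tau}k_\sigma$. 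So the content of the lemma is twofold: that dividing by $H_k^\RX$ converts a section of the first line bundle into a section of the second, and that this operation loses no information.

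The heart of the argument is the weight bookkeeping, which I would carry out using Table~\ref{fig:weights}. The form $H_k^\RX$ defined in~\eqref{eq:Ht} is a product of the Hasse invariants $h_{\tau,i}$ for $2\leqslant i\leqslant e_{\fp(\tau)}$ with prescribed exponents; from the table, $h_{\tau,i}$ has weight $+1$ in the $(\tau,i-1)$-component and $-1$ in the $(\tau,i)$-component. A direct computation, which the excerpt already states, shows that for each $\tau$ the weight of $f/H_k^\RX$ is $0$ in every component $(\tau,i)$ with $1\leqslant i\leqslant e_{\fp(\tau)}-1$ and equals $k_\tau$ in the component $\tautilde=(\tau,e_{\fp(\tau)})$. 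Thus $f/H_k^\RX$ lives in the line bundle $\bigotimes_{\tau\in\Sigmabar}\omega_{\tautilde}^{\otimes k_\tau}$, matching the Andreatta--Goren weight exactly. The only subtlety is that $H_k^\RX$ is a product of the $h_{\tau,i}$ for $i\geqslant 2$, which by Lemma~\ref{lem:complement} are precisely the Hasse invariants whose vanishing loci make up the complement of $X^\Ra$; hence $H_k^\RX$ is \emph{invertible} on $X^\Ra$, so the quotient $f/H_k^\RX$ is a genuine (not merely meromorphic) section over $X^\Ra$. This is why the construction produces a map into $M_k^\AG(\fc,\fn;\FF)$ rather than into some space of forms with poles.

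Injectivity is then the step I expect to be the real obstacle, and it is where the geometry of $X\setminus X^\Ra$ is used. The restriction map from sections over $X$ to sections over $X^\Ra$ is injective provided $X^\Ra$ is dense in $X$ and $X$ is reduced (indeed integral on each component), so that a section vanishing on the dense open $X^\Ra$ vanishes identically. Since $X$ is smooth of finite type over $\FF$ by the representability statement (\cite[Prop.~2.4]{RX}), it is reduced; and by Lemma~\ref{lem:complement} the complement $X\setminus X^\Ra=\bigcup_{\tau}\bigcup_{i=2}^{e_{\fp(\tau)}}Z_{\tau,i}$ is a union of divisors, hence $X^\Ra$ is a dense open. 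Multiplication by the invertible section $H_k^\RX$ is an isomorphism of line bundles over $X^\Ra$, so it carries no kernel, and the composite $f\mapsto (f/H_k^\RX)_{|X^\Ra}$ inherits injectivity from the restriction map. I would therefore conclude: since $f\mapsto f_{|X^\Ra}$ is injective and division by the nowhere-vanishing $H_k^\RX$ is an isomorphism onto its image, the map $f\mapsto f/H_k^\RX$ is the claimed injection $M_k^\Katz(\fc,\fn;\FF)\hookrightarrow M_k^\AG(\fc,\fn;\FF)$.
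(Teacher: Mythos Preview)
Your proof is correct and follows essentially the same approach as the paper: the weight bookkeeping and the invertibility of $H_k^\RX$ on $X^\Ra$ are exactly what the paper records in the sentences preceding the lemma. You are more explicit than the paper about why restriction to $X^\Ra$ is injective (density plus reducedness), which the paper leaves implicit; this is a reasonable elaboration but not a different argument.
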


A converse is described in Lemma~\ref{lem:RXtoAG} below.

\subsection{Minimal weights}\label{sec:min}

We recall the notion of {\em minimal weight} of a mod $p$ Hilbert modular form.

\begin{defi}\label{defi:minweight}
We define the {\em minimal weight} of $0 \neq f\in M_k^\Katz(\fc,\fn;\FF)$ to be the unique weight $k'$ such that 
$f =g\cdot  \prod_{\sigma \in \Sigma} h_{\sigma}^{n_\sigma}$, where $g\in  M_{k'}^\Katz(\fc,\fn;\FF)$ and  the integers $(n_\sigma)_{\sigma \in \Sigma}$  are as large as  possible. 
\end{defi}

\begin{lem}
The notion of minimal weight is well defined.
\end{lem}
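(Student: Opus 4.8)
The plan is to reinterpret divisibility by the Hasse invariants as a statement about orders of vanishing along prime divisors, and then to use Lemma~\ref{lem:complement} to decouple the conditions for different~$\sigma$, so that the largest admissible tuple $(n_\sigma)$ is obtained by maximising each coordinate separately. Throughout I assume $f\neq 0$, the zero form being excluded from the definition.

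Since $\cX$ is smooth over $\cO$, its special fibre $X$ is smooth over $\FF$, hence normal; consequently the local ring at the generic point of any prime divisor $D\subset X$ is a discrete valuation ring, and the order of vanishing $\ord_D(\cdot)$ of a rational section of any line bundle is well defined. Writing $w_\sigma\in\ZZ[\Sigma]$ for the weight of $h_\sigma$ read off from Table~\ref{fig:weights}, a factorisation $f=g\cdot\prod_\sigma h_\sigma^{n_\sigma}$ with $g$ a global section of $\omega^{\otimes(k-\sum_\sigma n_\sigma w_\sigma)}$ exists if and only if the rational section $g=f/\prod_\sigma h_\sigma^{n_\sigma}$ is regular on all of~$X$, which by normality is equivalent to the divisorial inequality
\[
\ord_D(f)\;\geqslant\;\sum_{\sigma} n_\sigma\,\ord_D(h_\sigma)\qquad\text{for every prime divisor }D\subset X.
\]

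The key input is Lemma~\ref{lem:complement}: the zero loci $Z_\sigma$ of distinct generalised Hasse invariants share no common divisor, so for each prime divisor $D$ there is at most one $\sigma$ with $\ord_D(h_\sigma)>0$. Hence the displayed system decouples completely over $\sigma$, the only constraints on $n_\sigma$ being $n_\sigma\,\ord_D(h_\sigma)\leqslant\ord_D(f)$ for the prime divisors $D\subseteq Z_\sigma$. Since $Z_\sigma\neq\emptyset$ by Lemma~\ref{lem:complement} and $\mathrm{div}(f)$ is supported on finitely many prime divisors, the quantity
\[
n_\sigma^{\max}\;=\;\min_{D\subseteq Z_\sigma}\left\lfloor \ord_D(f)/\ord_D(h_\sigma)\right\rfloor
\]
is a well-defined nonnegative integer, and the tuple $(n_\sigma^{\max})_\sigma$ is simultaneously admissible by the decoupling. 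I would then observe that this tuple dominates every admissible tuple coordinatewise, so it is the unique maximum, not merely a maximal element; in particular $k'=k-\sum_\sigma n_\sigma^{\max}\,w_\sigma$ is unique, and $g=f/\prod_\sigma h_\sigma^{n_\sigma^{\max}}$ is holomorphic and divisible by no $h_\sigma$, which is exactly the assertion.

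The only genuine obstacle is the decoupling step, and it is entirely supplied by Lemma~\ref{lem:complement}: without the no-common-divisor property one could only speak of maximal rather than maximum tuples, and the weight $k'$ would fail to be forced to be unique. The remaining points — existence of orders of vanishing, finiteness of $n_\sigma^{\max}$ from $Z_\sigma\neq\emptyset$, and the equivalence of divisibility with the divisorial inequality — are standard consequences of the smoothness of $X$ recorded above.
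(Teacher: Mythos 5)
Your proof is correct and follows essentially the same route as the paper: both arguments rest on Lemma~\ref{lem:complement} (no two generalised Hasse invariants share a divisor in their zero loci) to decouple the divisibility conditions over $\sigma$ and then divide $f$ by the maximal powers determined by orders of vanishing along the $Z_\sigma$. Your version is somewhat more careful than the paper's one-line argument, in that it handles possibly reducible $Z_\sigma$ (the minimum over components) and the possibility $\ord_D(h_\sigma)>1$ (the floor), but the underlying idea is identical.
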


\begin{proof}
First note that $Z_\sigma$ is non-empty for every $\sigma \in \Sigma$.
Indeed, this follows from \cite[Cor.~5.7]{DK2}.
Alternatively, we have shown in Remark~\ref{nonemptyrem} that $Z_{\tau,i}$ is non-empty for every $\tau \in \Sigmabar$ and $2 \leq i \leq e_{\fp(\tau)}$.
Moreover, it is well known that the zero locus of $h_{\tau} = \prod_{i=1}^{e_{\fp(\tau)}}h_{\tau,i}$ in $X^\Ra$ is non-empty for every $\tau \in \Sigmabar$ (see \cite[Cor.~8.18]{AG}).
By Lemma~\ref{lem:complement}, the Hasse invariants $h_{\tau,i}$ with $\tau \in \Sigmabar$ and $2 \leq i \leq e_{\fp(\tau)}$ are invertible on $X^\Ra$.
Therefore, it follows that the divisor $Z_{\tau,1}$ is non-empty for every $\tau \in \Sigmabar$.

Recall from Lemma~\ref{lem:complement} that the zero loci of two different generalised Hasse invariants do not have a common divisor. Let $j_{\sigma}$ be the order of vanishing of a Hilbert modular form $f\neq 0$ on $Z_\sigma$.
So, if we divide $f$ by $\prod_{\sigma \in \Sigma} h_{\sigma}^{j_\sigma}$, we get the modular form $g$ needed in Definition~\ref{defi:minweight}.
Hence, it follows that the notion of minimal weight is indeed well defined (see also the proof of \cite[Thm.~8.19]{AG} and \cite[\S 8]{DK2}).
\end{proof}

\begin{rem}
When $p$ is unramified in $F$ ({\it i.e.},  $\Sigma=\Sigmabar$),  the notion of minimal weights from Definition~\ref{defi:minweight}
is the same as the one  introduced by Andreatta and Goren \cite[\S8.20]{AG}. On the other hand, when $\fp$ is ramified, multiplying $0 \neq f\in M_k^\Katz(\fc,\fn;\FF)$ having minimal weight $k$ with arbitrary powers of generalised Hasse invariants  ($h_{\tau,i}$ with $2\leqslant  i \leqslant e_\fp$) yields forms sharing the same ${\bar k}$ but whose weights are not minimal anymore. 
\end{rem}

In  \cite{DK, DK2}, Diamond and Kassaei 
define the {\em minimal cone}  by
\[ C^\minim = \left\{ \sum_{\tau \in \Sigmabar} \sum_{i=1}^{e_{\fp(\tau)}}k_{\tau,i} \sigma_{\tau,i} \in \QQ[\Sigma] \;\Big{|} \;\forall\, \tau \in \Sigmabar, \forall\, 1 \leqslant  i < e_{\fp(\tau)}, k_{\tau,i+1} \geqslant k_{\tau,i},  p k_{\tau,1} \geqslant k_{\phi^{-1}\circ\tau,e_{\fp(\tau)}}\right\}.\]
Regarding the minimal weights for Hilbert modular forms, Diamond and Kassaei prove the following result in \cite[Cor. 5.3]{DK}, when $p$ is unramified in $F$, and in \cite[Cor. 8.2]{DK2}, when $p$ is ramified in $F$. 

\begin{prop}[Diamond--Kassaei]\label{prop:DKmin}
The minimal weight of  $0 \neq f \in M_k^\Katz(\fc,\fn;\FF)$ belongs to $C^\minim$.
\end{prop}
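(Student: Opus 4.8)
The plan is to reduce to the minimal-weight form itself and argue by contradiction. By Definition~\ref{defi:minweight} and the well-definedness of the minimal weight, we may replace $f$ by the form $g\in M_{k'}^\Katz(\fc,\fn;\FF)$ of minimal weight $k'$ obtained from $f$ by dividing out the maximal power of each $h_\sigma$; thus $\ord_{Z_\sigma}(g)=0$ for every $\sigma\in\Sigma$. It then suffices to prove the contrapositive: if the weight $k'=\sum_{\tau,i}k'_{\tau,i}\sigma_{\tau,i}$ of such a $g$ violates one of the inequalities defining $C^\minim$, then $g$ vanishes identically along the corresponding Hasse divisor $Z_\sigma$, contradicting $\ord_{Z_\sigma}(g)=0$.

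First I would exploit the dichotomy among the Hasse divisors supplied by Lemma~\ref{lem:complement}: the divisors $Z_{\tau,i}$ with $2\leqslant i\leqslant e_{\fp(\tau)}$ are exactly the components of the complement of the Rapoport locus $X^\Ra$, whereas $Z_{\tau,1}$ meets $X^\Ra$. This matches the two families of inequalities cutting out $C^\minim$: the ``vertical'' conditions $k'_{\tau,i+1}\geqslant k'_{\tau,i}$ attach to $h_{\tau,i+1}$ (of weight $\sigma_{\tau,i}-\sigma_{\tau,i+1}$), while the single ``Frobenius'' condition $p\,k'_{\tau,1}\geqslant k'_{\phi^{-1}\circ\tau,e_{\fp(\tau)}}$ attaches to $h_{\tau,1}$ (of weight $p\,\sigma_{\phi^{-1}\circ\tau,e_{\fp(\tau)}}-\sigma_{\tau,1}$, coming from Verschiebung as in Remark~\ref{rem:Hasse}). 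So a violation of either type pins down a specific divisor $Z_\sigma$ along which I must show $g$ vanishes.

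The heart is then a restriction-and-degree computation on $Z=Z_\sigma$. By Lemma~\ref{lem:complement} every generalised Hasse invariant other than $h_\sigma$ is generically invertible on $Z$ (its divisor meets $Z$ in codimension $\geqslant 2$), so multiplication by these invariants identifies the restrictions $\omega_{\sigma'}|_Z$ for consecutive indices $\sigma'$ over a dense open subset of $Z$, chaining all the graded line bundles together (with $p$-powers at the Frobenius links); the one ``missing'' comparison, across the gap cut out by $h_\sigma$, is instead supplied by the conormal bundle, since $Z$ is the zero divisor of $h_\sigma$ gives $\cO_X(Z)\cong\omega^{\otimes k(\sigma)}$, where $k(\sigma)$ is the weight of $h_\sigma$ read off from Table~\ref{fig:weights}, whence $N^\vee_{Z/X}\cong\omega^{\otimes -k(\sigma)}|_Z$. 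Choosing a complete curve $C\subset Z$ meeting the open locus where the chain of identifications is valid, these isomorphisms express $\deg(\omega^{\otimes k'}|_C)$ as a fixed multiple of $\deg(\omega_{\sigma_0}|_C)$ plus a multiple of $\deg(N_{Z/X}|_C)$ whose coefficient is governed by the quantity $k'_{\tau,i+1}-k'_{\tau,i}$ (respectively $p\,k'_{\tau,1}-k'_{\phi^{-1}\circ\tau,e_{\fp(\tau)}}$). When the inequality fails this coefficient has the wrong sign, forcing $\deg(\omega^{\otimes k'}|_C)<0$, so $g|_C=0$; letting $C$ vary shows $g|_Z=0$, i.e.\ $g$ is divisible by $h_\sigma$, the desired contradiction. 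The triviality of the $q$-expansions of the $h_{\tau,i}$ (Lemma~\ref{qexplem}) ensures no spurious constants are introduced by this division.

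The main obstacle is precisely the positivity input required to conclude that the rewritten restricted bundle has negative degree, and this is where ramification bites. On the divisors $Z_{\tau,i}$ with $i\geqslant 2$, which lie entirely in the singular, non-Rapoport locus, the sheaf $\omega_\tau$ is no longer locally free over $\fo_{\fp}\otimes\cO$ (as the explicit local computation in the proof of Lemma~\ref{lem:complement} already exhibits), so the individual line bundles $\omega_{\tau,i}$ and the conormal comparisons only make sense on the Pappas--Rapoport model $X$ rather than on the Deligne--Pappas model. Producing suitable complete curves $C$ in these strata and computing the exact degrees of $\omega_{\sigma_0}|_C$ and $N_{Z/X}|_C$ — equivalently, establishing the requisite positivity of $\omega$ in the cone directions — is the substantial geometric work; in the unramified case only the classical Verschiebung divisor $Z_{\tau,1}$ occurs and this is comparatively standard, whereas the ramified case is carried out by Diamond--Kassaei in \cite{DK,DK2}, to which I refer for the full details.
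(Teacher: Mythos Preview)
The paper does not prove this proposition at all: it is stated as a result of Diamond--Kassaei and simply cited from \cite[Cor.~5.3]{DK} (unramified case) and \cite[Cor.~8.2]{DK2} (ramified case), with no argument given. So there is no ``paper's own proof'' to compare your sketch against.

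Your proposal is a reasonable outline of the strategy behind the Diamond--Kassaei argument --- reducing to a form not divisible by any $h_\sigma$, matching each inequality in $C^\minim$ to a specific Hasse divisor, and then forcing divisibility via a negative-degree computation on curves in that divisor --- and you correctly identify the positivity of the automorphic bundles along strata as the substantive geometric input that you cannot supply here. Since you end by deferring that input to \cite{DK,DK2}, your proposal is, in effect, a more expansive version of what the paper does: cite the result. Be aware, though, that some of the heuristics in your middle paragraph (e.g., the conormal-bundle bookkeeping and the precise way the ``missing'' comparison enters the degree formula) would require real work to make rigorous, and the actual proofs in \cite{DK,DK2} are organised rather differently from your sketch; so it would be misleading to present this as more than motivation for the citation.
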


The minimal weights allow us to further elaborate on the relation between the modular forms defined by Andreatta and Goren \cite{AG} and those of Definition~\ref{defi:HMF}.

\begin{lem}\label{lem:RXtoAG}
Let $\bar k \in \ZZ[\Sigmabar]$.
There is a finite subset $K \subset C^\minim$ such that for every $f \in M_{\bar k}^\AG(\fc,\fn;\FF)$, there is $k' \in K$, a modular form 
$g \in M_{k'}^\Katz(\fc,\fn;\FF)$ and a product of generalised Hasse invariants $H = \prod_{\tau \in \Sigmabar} \prod_{i=1}^{e_{\fp(\tau)}} h_{\tau,i}^{j_{\tau,i}}$ with $j_{\tau,i} \in \ZZ$ and $j_{\tau,1}\geqslant 0$,  such that the restriction to $X^\Ra$ of $g \cdot H$ equals~$f$.
In particular, $f$ and $g$ have the same geometric $q$-expansion at the cusp $\infty_\fc$. 
\end{lem}

\begin{proof} The result is trivial for $f=0$.
Seeing $0 \neq f\in M_{\bar k}^\AG(\fc,\fn;\FF)$  as a meromorphic section of the line bundle $\bigotimes_{\tau \in \Sigmabar}{\omega_{\tautilde}}^{ \otimes k_{\tau}}$ over~$X$, we let $j_{\tau,i}\in \ZZ$ be the order of vanishing of $f$ along the divisor $Z_{\tau,i}$ defined by the Hasse invariant $h_{\tau,i}$ for $\tau \in \Sigmabar$ and $1 \leqslant i \leqslant e_{\fp(\tau)}$. As $f$ is holomorphic on $X^\Ra$, which intersects every irreducible component of $Z_{\tau,1}$ non-trivially by Lemma~\ref{lem:complement}, we deduce that  $j_{\tau,1}\geqslant 0$. 
Dividing $f$ by $H = \prod_{\tau \in \Sigmabar} \prod_{i=1}^{e_{\fp(\tau)}} h_{\tau,i}^{j_{\tau,i}}$  yields a holomorphic section on all of $X$, {\it i.e.}, a Katz modular form $g$ in a weight  $k'$ which is by construction minimal, hence belongs to $C^\minim$ by Proposition~\ref{prop:DKmin}. 
As the $q$-expansions of all generalised Hasse invariants at the cusp $\infty_\fc$ equal~$1$ by Lemma~\ref{qexplem}, both $f$ and $g$ have the same $q$-expansion.

We next prove that given $\bar k$,   there are only finitely many $k' \in C^\minim$ that can appear for  non-zero modular forms in $M_{\bar k}^\AG(\fc,\fn;\FF)$ via the method in the previous paragraph. 
Since dividing by $h_{\tau,1}$ (for any $\tau \in \Sigmabar$) subtracts $(p-1)$ from the sum of the weights, whereas multiplying or dividing by $h_{\tau,i}$ for $\tau \in \Sigmabar$ and $2 \leqslant i \leqslant e_{\fp(\tau)}$ leaves that sum unchanged, we deduce that
$ \sum_{\sigma \in \Sigma} k'_\sigma \leqslant  \sum_{\tau \in \Sigmabar} k_\tau$. 
As in the language of \cite{DK2}, the minimal cone is contained in the standard cone, we have $k'_\sigma \geqslant 0$ for all $\sigma \in \Sigma$ and the claimed finiteness follows.
\end{proof}

The finiteness of $K$ in Lemma~\ref{lem:RXtoAG} yields the following result.
\begin{cor}
The $\FF$-vector space $M_{\bar k}^\AG(\fc,\fn;\FF)$ is finite dimensional.
\end{cor}

We now further use the work of Diamond and Kassaei to study the minimality of the weight for modular forms of parallel weight one.

\begin{cor}\label{cor:minfil1}
Suppose $f \in M_1^\Katz(\fc,\fn;\FF)$ is a non-zero Hilbert modular form and $k$ is its minimal weight. 
Then, for any prime $\fp\mid p$, either $k_{\sigma}=1$ for all $\sigma \in \Sigma_{\fp}$  (in that case, we say that the weight is {\em minimal at~$\fp$}), or  $k_{\sigma}=0$ for all $\sigma \in \Sigma_{\fp}$, the latter case being possible only if $(p-1)$ divides $e_{\fp}$. 
\end{cor}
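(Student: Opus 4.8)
The plan is to work one prime $\fp\mid p$ at a time, which is legitimate because every generalised Hasse invariant $h_\sigma$ with $\sigma\in\Sigma_\fp$ has weight supported on $\Sigma_\fp$ (the Frobenius $\phi$ permutes $\Sigmabar_\fp$ among itself, and all the nonzero weight components listed in Table~\ref{fig:weights} lie above the same prime), so dividing $f$ by Hasse invariants above $\fp$ leaves the components of the weight above every other prime untouched. Write $\Sigmabar_\fp=\{\tau_a=\phi^a\circ\tau_0 : a\in\ZZ/f_\fp\ZZ\}$ and abbreviate $e=e_\fp$. Expressing the minimal weight via $f=g\cdot\prod_{\sigma}h_\sigma^{n_\sigma}$ with $g\in M_k^\Katz(\fc,\fn;\FF)$ and $n_\sigma\geqslant 0$, and setting $m_{a,i}:=n_{\sigma_{\tau_a,i}}\geqslant 0$, the weights recorded in Table~\ref{fig:weights} give, for the components of $k$ above $\fp$,
\[ k_{\tau_a,i}=1+m_{a,i}-m_{a,i+1}\ (1\leqslant i<e), \qquad k_{\tau_a,e}=1+m_{a,e}-p\,m_{a+1,1}.\]
Summing over $\Sigma_\fp$ telescopes to the single identity
\[ \sum_{\sigma\in\Sigma_\fp}k_\sigma = e f_\fp-(p-1)\sum_{a}m_{a,1},\]
which, since the $m_{a,1}$ are non-negative, is the source of the upper bound.

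First I would record the input from Proposition~\ref{prop:DKmin}: the minimal weight $k$ lies in $C^\minim$, so each block is non-decreasing, $k_{\tau_a,1}\leqslant\cdots\leqslant k_{\tau_a,e}$, and the cyclic inequalities $p\,k_{\tau_a,1}\geqslant k_{\tau_{a-1},e}$ hold. Evaluating the latter at a block $a$ where the global minimum $\mu:=\min_{\sigma\in\Sigma_\fp}k_\sigma=k_{\tau_a,1}$ is attained gives $p\mu\geqslant k_{\tau_{a-1},e}\geqslant\mu$, hence $(p-1)\mu\geqslant 0$ and $\mu\geqslant 0$; so all components above $\fp$ are non-negative.

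The dichotomy then splits according to whether $\mu=0$ or $\mu\geqslant 1$. If $\mu=0$, choose $a$ with $k_{\tau_a,1}=0$; the cyclic inequality forces $k_{\tau_{a-1},e}\leqslant 0$, hence $=0$, and since block $a-1$ is non-decreasing and non-negative it is identically $0$, in particular $k_{\tau_{a-1},1}=0$. Running this backwards around the $\phi$-orbit shows $k_\sigma=0$ for all $\sigma\in\Sigma_\fp$. If instead $\mu\geqslant 1$, then every $k_\sigma\geqslant 1$, so the left-hand side of the telescoping identity is $\geqslant e f_\fp$; comparing with the right-hand side and using $m_{a,1}\geqslant 0$ forces $\sum_a m_{a,1}=0$ and $\sum_{\sigma}k_\sigma=e f_\fp$, whence every $k_\sigma=1$. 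This $\mu\geqslant 1$ case is the one genuinely hard point: it is where reachability (the non-negativity of the exponents $n_\sigma$) is essential, since membership in $C^\minim$ alone never bounds weights from above, as $C^\minim$ contains parallel weights of every size.

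Finally, to see that the weight-$0$ case forces $(p-1)\mid e$, I would feed $k_\sigma\equiv 0$ back into the block-wise form of the telescoping identity, namely $\sum_i k_{\tau_a,i}=e+m_{a,1}-p\,m_{a+1,1}$, obtaining the cyclic recurrence $p\,m_{a+1,1}=m_{a,1}+e$. Applying it at an index realising the maximum of $(m_{a,1})_a$ gives $m_{a,1}\leqslant e/(p-1)$, and at an index realising the minimum gives $m_{a,1}\geqslant e/(p-1)$, so $m_{a,1}=e/(p-1)$ for all $a$; as these are integers, $(p-1)\mid e=e_\fp$. Granting Proposition~\ref{prop:DKmin}, the whole argument is thus a matter of balancing the lower bound and monotonicity coming from $C^\minim$ against the upper bound coming from non-negativity of the Hasse exponents.
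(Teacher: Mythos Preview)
Your proof is correct and follows the same overall strategy as the paper: combine Diamond--Kassaei's result that the minimal weight lies in $C^\minim$ (Proposition~\ref{prop:DKmin}) with the explicit weight relations coming from Table~\ref{fig:weights} and the non-negativity of the Hasse exponents $n_\sigma$.

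In fact your argument is more complete than the paper's on one point. The paper's proof deduces from $k\in C^\minim$ that $k_\sigma\geqslant 0$ and that if one $k_\sigma$ vanishes then all components above $\fp$ vanish; it then passes directly to the analysis of the case $k_\sigma\equiv 0$. What is not made explicit there is why the remaining alternative forces $k_\sigma=1$ for all $\sigma\in\Sigma_\fp$ rather than merely $k_\sigma\geqslant 1$. Your telescoping identity $\sum_{\sigma\in\Sigma_\fp}k_\sigma=e_\fp f_\fp-(p-1)\sum_a m_{a,1}$, together with $m_{a,1}\geqslant 0$, supplies exactly this missing upper bound; as you rightly note, membership in $C^\minim$ alone cannot, since $C^\minim$ contains arbitrarily large parallel weights. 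For the divisibility $(p-1)\mid e_\fp$, your max/min argument on the cyclic recurrence $p\,m_{a+1,1}=m_{a,1}+e_\fp$ is a clean alternative to the paper's terser derivation (which implicitly solves the full linear system or iterates the recurrence around the cycle); both yield $m_{a,1}=e_\fp/(p-1)$ for every $a$.
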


\begin{proof}
By Proposition~\ref{prop:DKmin}, we know that $k \in C^\minim$. By definition of $C^\minim$ one has 
 $k_{\sigma} \geqslant  0$ for all $\sigma\in \Sigma$ and, moreover, if $k_{\sigma} = 0$ with  $\sigma\in \Sigma_\fp$ for some
$\fp\mid p$, then $k_{\sigma} = 0$ for all $\sigma\in \Sigma_\fp$. 

We assume for the rest of this proof that  $k_{\sigma} = 0$  for all $\sigma\in \Sigma_\fp$. 
Denote the weight of the Hasse invariant $h_{\tau,i}$ by $w_{\tau,i}$.
By the definition of the minimal weight, there exist integers $n_{\tau,i} \geqslant  0$ such that
\begin{align}\label{sumeq}
\sum_{\sigma \in \Sigma_\fp} \sigma= \sum_{\tau \in \Sigmabar_{\fp}}\sum_{i=1}^{e_{\fp}}n_{\tau,i}w_{\tau,i}.
\end{align}
From the description of $w_{\tau,i}$ (see Table~\ref{fig:weights}), it follows that for all $i\geqslant  2$ one has $n_{\tau,i}=n_{\tau,i-1}+1$ and furthermore $pn_{\tau,1} = n_{\phi^{-1}\circ \tau,1} +e_{\fp}$. It is then easy to find that 
$n_{\tau,1}=\frac{e_{\fp}}{p-1}$ for all $\tau\in \Sigmabar_{\fp}$, showing  that $p-1$ divides $e_{\fp}$. 
\end{proof}

The following result, the proof of which will be completed in the next subsection, shows that one can be more precise when restricting to cuspforms.  

\begin{prop}\label{prop:cusp-zero}
Let $\fp$ be a prime of $F$ dividing~$p$.
Let $k = \sum_{\sigma \in \Sigma} k_\sigma \sigma \in \ZZ[\Sigma]$ be a weight such that $k_\sigma = 0$ for all $\sigma \in \Sigma_\fp$.
Then $S_k^\Katz(\fc,\fn;\FF) = 0$. 
\end{prop}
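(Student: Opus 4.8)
The plan is to produce a \emph{partial Frobenius} endomorphism $\Phi$ of the finite-dimensional $\FF$-vector space $V := S_k^\Katz(\fc,\fn;\FF)$ and to show that $\Phi$ is simultaneously injective and topologically nilpotent on $q$-expansions; a short finiteness argument then forces $V=0$. Following the idea of Diamond--Sasaki \cite[\S9.8]{DS} but working on the Rapoport locus $X^\Ra$, I would construct $\Phi$ from the partial Frobenius isogeny attached to a suitable power $\fp^{r}$ of $\fp$, the power $r$ being chosen so that $\fp^r$ is trivial in the narrow class group with totally positive generator $\pi$; this ensures that the isogeny sends the $\fc$-component back to itself and fixes the cusp $\infty_\fc$, sparing us any bookkeeping between different components in $\cC$ through the iteration. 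The crucial point is that, because $k_\sigma = 0$ for every $\sigma \in \Sigma_\fp$ by hypothesis, the Frobenius twist that this $\fp$-primary isogeny induces on the $\fp$-part $\bigoplus_{\sigma \in \Sigma_\fp}\omega_\sigma$ of $\omega$ is invisible to the line bundle $\omega^{\otimes k}$, whereas on the prime-to-$\fp$ part the isogeny is an isomorphism; hence $\Phi$ preserves the weight $k$ and the cuspidality condition and defines an endomorphism of $V$. Since the complement of $X^\Ra$ in $X$ has codimension $\geqslant 2$ (Lemma~\ref{lem:complement}), sections constructed over $X^\Ra$ extend uniquely to $X$, so it is harmless to work over the Rapoport locus.

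Next I would compute the effect of $\Phi$ on the geometric $q$-expansion at $\infty_\fc$. The expected formula is that $\Phi$ sends $\sum_{\xi \in \fc_+} a_\xi q^\xi$ to $\sum_{\xi \in \fc_+} \phi(a_\xi)\, q^{\pi \xi}$, where $\pi$ is the chosen totally positive generator of $\fp^r$ and $\phi$ is the absolute Frobenius of $\FF$. Two consequences are then immediate. First, the substitution $\xi \mapsto \pi\xi$ on exponents is injective and $\phi$ is a bijection of $\FF$, so $\Phi$ is injective on $q$-expansions; by the $q$-expansion principle (cf. \cite[\S8]{dimdg}) it is therefore injective on $V$. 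Second, a cuspform has no constant term, so every exponent occurring lies in $\fc_+$ and is totally positive; as $\pi$ is a totally positive non-unit, the exponents $\pi^n\xi$ grow without bound in every infinite place as $n \to \infty$. Hence for each $f \in V$ one has $\Phi^n(f) \to 0$ in the $q$-adic topology, i.e. $\Phi$ is topologically nilpotent. This step is where cuspidality is essential: the constant term would otherwise survive every iteration.

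Finally I would conclude as follows. The residue field $\FF = \cO/(\varpi)$ is finite and $V$ is finite-dimensional over $\FF$, being a space of global sections of a line bundle on the proper toroidal compactification of $X$; thus $V$ is a \emph{finite} set. An injective self-map of a finite set is a bijection, so $\Phi$ has finite order: $\Phi^m = \id_V$ for some $m \geqslant 1$. Then, for every $f \in V$, topological nilpotence gives $f = \Phi^{nm}(f) \to 0$ as $n \to \infty$, forcing $f = 0$. Therefore $V = S_k^\Katz(\fc,\fn;\FF) = 0$, as claimed. Note that the semilinearity of $\Phi$ through $\phi$ is no obstruction, precisely because the argument is phrased in terms of the underlying finite set rather than of $\FF$-linearity.

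I expect the only substantial work to lie in the first two paragraphs: turning the partial Frobenius isogeny into a genuinely well-defined endomorphism of $V$ in the ramified Pappas--Rapoport setting, and pinning down its exact action on geometric $q$-expansions. The concluding finiteness argument is formal. The subtlety that forces the use of $\fp^r$ rather than $\fp$ itself is exactly the need to keep the operator on a single polarisation component $\fc$ and a single cusp throughout the iteration.
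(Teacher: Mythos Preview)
Your strategy is exactly the paper's: build a partial Frobenius operator from a power $\fp^{e}$ that is narrowly principal with totally positive generator (the paper also imposes $\alpha \equiv 1 \pmod{\fn}$, needed to preserve the $\mu_\fn$-level structure), observe that the hypothesis $k_\sigma = 0$ for $\sigma \in \Sigma_\fp$ makes the $\fp$-part of the isogeny invisible on $\omega^{\otimes k}$, compute the effect on $q$-expansions, and conclude from injectivity plus topological nilpotence. The paper's operator is in fact $\FF$-linear (it is first constructed over $\FF_p$ and then base-changed), so the $q$-expansion formula is $\sum a_\xi q^\xi \mapsto \sum a_\xi q^{\alpha\xi}$ without the twist by $\phi$; but as you note, this makes no difference to the finiteness argument.

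One point needs correction. Your claim that the complement of $X^\Ra$ in $X$ has codimension $\geqslant 2$ is false when $p$ ramifies: Lemma~\ref{lem:complement} says precisely that this complement is the union of the divisors $Z_{\tau,i}$ for $i \geqslant 2$, hence has codimension~$1$. The codimension~$\geqslant 2$ statement you have in mind holds for the complement of $X^\Ra$ inside the Deligne--Pappas model $X^\DP$, not inside the Pappas--Rapoport model $X$. Consequently you cannot extend sections from $X^\Ra$ back to $X$ for free, and $\Phi$ is not automatically an endomorphism of $S_k^\Katz(\fc,\fn;\FF)$. The paper sidesteps this by first injecting $S_k^\Katz(\fc,\fn;\FF) \hookrightarrow S_k^\AG(\fc,\fn;\FF)$ via Lemma~\ref{lem:AGtoRX} and then running the whole argument on $S_k^\AG$ (sections over $X^\Ra$), proving $S_k^\AG = 0$ directly. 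Your injectivity/nilpotence/finiteness argument transfers verbatim to $S_k^\AG$, so this is a one-line fix, but the justification you wrote is not the right one.
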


\begin{proof} By Lemma~\ref{lem:AGtoRX}, $S_k^\Katz(\fc,\fn;\FF)$ injects into $S_{\bar k}^\AG(\fc,\fn;\FF)$, which is zero by Proposition~\ref{cor:Sknull}.
Alternatively, if there is a unique prime $\fp$ of $F$ dividing~$p$, then  $k=0$ and Koecher's principle applied to an embedding of the connected scheme $X$ in a toroidal compactification implies that $\rH^0(X,\cO_{X})$ consists only of forms which are constant, thus it does not contain any non-zero cuspforms.
\end{proof} 

\begin{cor}\label{cor:cusp-minfil1}
The weight of any non-zero parallel weight $1$ cuspform is minimal. 
\end{cor}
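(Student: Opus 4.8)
The goal is Corollary~\ref{cor:cusp-minfil1}: the minimal weight of any non-zero parallel weight~$1$ cuspform is itself parallel weight~$1$. The plan is to combine the two structural results already established for the minimal weight with the new cuspidal vanishing statement. Concretely, let $f \in S_1^\Katz(\fc,\fn;\FF)$ be a non-zero cuspform and let $k$ denote its minimal weight. Since $f$ is in particular a non-zero element of $M_1^\Katz(\fc,\fn;\FF)$, Corollary~\ref{cor:minfil1} applies and tells us that for each prime $\fp \mid p$ we are in exactly one of two cases: either $k_\sigma = 1$ for all $\sigma \in \Sigma_\fp$, or $k_\sigma = 0$ for all $\sigma \in \Sigma_\fp$ (the latter forcing $(p-1) \mid e_\fp$). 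The task is therefore simply to rule out the second case under the cuspidality hypothesis.

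First I would observe that minimality is detected prime-by-prime: the minimal weight is obtained by dividing $f$ by the maximal possible powers of the generalised Hasse invariants $h_\sigma$, and since each $h_\sigma$ has weight supported on the embeddings above a single prime $\fp(\sigma)$, the decomposition $\Sigma = \coprod_{\fp \mid p}\Sigma_\fp$ lets us treat each $\fp$ independently. So fix a prime $\fp \mid p$ and suppose, for contradiction, that $k_\sigma = 0$ for all $\sigma \in \Sigma_\fp$. By minimality, $g := f$ itself is a cuspform of weight $k$ living in $S_k^\Katz(\fc,\fn;\FF)$ (it is the minimal-weight form $g$ of Definition~\ref{defi:minweight}, and dividing a cuspform by nowhere-relevant Hasse invariants preserves cuspidality). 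Now the weight $k$ has the property that $k_\sigma = 0$ for all $\sigma \in \Sigma_\fp$, which is precisely the hypothesis of Proposition~\ref{prop:cusp-zero}.

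The heart of the argument is then the direct invocation of Proposition~\ref{prop:cusp-zero}, which asserts $S_k^\Katz(\fc,\fn;\FF) = 0$ whenever $k_\sigma = 0$ for all $\sigma$ above some single prime $\fp \mid p$. This contradicts $g \neq 0$ (note $g$ and $f$ differ only by multiplication by an invertible-where-it-matters product of Hasse invariants, so $g \neq 0$ since $f \neq 0$). Hence the case $k_\sigma = 0$ on $\Sigma_\fp$ cannot occur, and we are left with $k_\sigma = 1$ for all $\sigma \in \Sigma_\fp$. Running this over every $\fp \mid p$ yields $k_\sigma = 1$ for all $\sigma \in \Sigma$, i.e.\ the minimal weight is parallel weight~$1$, as claimed.

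I do not expect any genuine obstacle here: all the substantive work has been front-loaded into Proposition~\ref{prop:cusp-zero} (whose own proof rests on the partial Frobenius operator $\Phi_{\fp^e}$ of the next subsection being injective and topologically nilpotent) and into the Diamond--Kassaei minimal-cone result underlying Corollary~\ref{cor:minfil1}. The only point requiring a modicum of care is the bookkeeping that lets one apply Proposition~\ref{prop:cusp-zero} with respect to a \emph{single} prime $\fp$ rather than demanding the partial-weight-zero condition at every prime simultaneously: the proposition is stated for one $\fp$ at a time, and Corollary~\ref{cor:minfil1} already guarantees the all-or-nothing dichotomy at each $\fp$ separately, so these dovetail cleanly. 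Thus the corollary is essentially an immediate consequence of the two preceding results, and the proof should be a short paragraph.
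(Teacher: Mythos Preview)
Your proposal is correct and follows essentially the same route as the paper's proof: invoke Corollary~\ref{cor:minfil1} to get the prime-by-prime dichotomy on the minimal weight~$k$, then apply Proposition~\ref{prop:cusp-zero} to rule out the case $k_\sigma=0$ on some~$\Sigma_\fp$. One small notational slip: you write ``$g := f$ itself is a cuspform of weight~$k$'', but $f$ has parallel weight~$1$, not weight~$k$; the form of weight~$k$ is $g = f/\prod_\sigma h_\sigma^{n_\sigma}$ as in Definition~\ref{defi:minweight}, and your later parenthetical remarks make clear you understand this---just tidy the wording. Your explicit observation that $g$ remains cuspidal (since the Hasse invariants have $q$-expansion~$1$ by Lemma~\ref{qexplem}) is a detail the paper leaves implicit.
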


\begin{proof} 
Let $f$ be a non-zero cuspform of parallel weight $1$ and let $k$ be its minimal weight.
Suppose the minimal weight $k$ is not $\sum_{\sigma \in \Sigma} \sigma$. Then by Corollary~\ref{cor:minfil1}
we already know that there exists $\fp\mid p$ such that $k_\sigma=0$ for all $\sigma\in \Sigma_\fp$.
Moreover, as the generalised Hasse invariants do not vanish at any cusp (see Lemma~\ref{qexplem}), 
we have constructed a  \emph{non-zero cuspform} of weight $k$, 
contradicting  Proposition~\ref{prop:cusp-zero}. This proves the corollary.
\end{proof}

\begin{rem}
Confusion may arise from the fact that parallel weight $1$ forms in our sense have weight exponents $e_{\fp(\tau)}$ when seen as a modular form in $M_{\bar k}^\AG(\fc,\fn;\FF)$ as $\bigwedge^{e_{\fp(\tau)}} \omega_\tau \simeq \omega_{\tautilde}^{\otimes e_{\fp(\tau)}}$ for $\tau \in \Sigmabar$ over the Rapoport locus (see Lemma~\ref{lem:AGtoRX}).
\end{rem}

\subsection{Partial Frobenius operator}\label{sec:partial-frob}
Fix $\fc\in \cC$ and let $e \in \NN$ be such that $\fp^{ee_\fp} = (\alpha)$ with $\alpha \in \fo_+$ and $\alpha \equiv p^e  \equiv 1 \pmod{\fn}$. Also let $\beta \in \fo_+$ such that $p^e=\alpha \cdot \beta$.
In order to lighten notation, we let $Y=X^\Ra$ denote the Rapoport locus and let $s:\cA \to Y$ be the universal  $\fc$-polarised HBAV endowed with $\mu_{\fn}$-level structure.
Let  $\cA^{(p^e)} = \cA \times_{Y, \Fr^e} Y$ be the base change by the $e$-th power of absolute Frobenius $\Fr: Y \to Y$.
The $e$-th power of Verschiebung then defines an  isogeny over~$Y$
\[ \cA^{(p^e)} \xrightarrow{\Ver^e} \cA, \]
the kernel of which we denote by~$H$. It is a finite group scheme with an ${\fo}/(p^e)$-action.
Hence we can apply the Chinese remainder theorem to obtain the direct product decomposition
$H = H_\fp \times H_\fp'$, where $H_\fp = H[\alpha]$ is the $\fp$-component of $H$ and $H_\fp' = H[\beta]$ is the product of $\fp'$-components of $H$ for all $\fp' \neq \fp$ dividing $p$.
We now define the abelian variety
\[\cB = \cA^{(p^e)} / H_\fp', \]
through which $\Ver^{e}$ factors, leading to an  isogeny over~$Y$
\begin{align}\label{eq:triangle} 
\xymatrix@R=1em{
\cB \ar@{->}[rr]^{V_\cA} \ar@{->}[dr]^{t}   && \cA \ar@{->}[dl]_{s}. \\
& Y\\}
\end{align}

\begin{lem}\label{lem:polquo}
The abelian variety $\cB$ inherits a $\mu_{\fn}$-level structure and an $\alpha^{-1}\fc$-polarisation~$\lambda_\alpha$.
\end{lem}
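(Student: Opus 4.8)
The plan is to equip $\cB$ with the full HBAV datum by transporting that of $\cA$ along the \'etale isogeny $V_\cA\colon\cB\to\cA$, using crucially that $\ker V_\cA$ has $p$-power order and is annihilated by the totally positive generator $\alpha$ of $\fp^{e e_\fp}$. First I would record that $\cB$ is an HBAV over $Y$: indeed $\cA^{(p^e)}$ is an HBAV, being the base change of $\cA$ along the $\fo$-linear morphism $\Fr^e$, and $H_\fp'$ is an $\fo$-stable finite flat subgroup scheme of $\cA^{(p^e)}$ (as a factor in the Chinese remainder decomposition of the $\fo$-stable group $H=\ker\Ver^e$); hence the quotient $\cB=\cA^{(p^e)}/H_\fp'$ is an abelian scheme of relative dimension $d$ carrying an $\fo$-action, and $V_\cA$ is an $\fo$-linear isogeny with $\ker V_\cA=H/H_\fp'=H_\fp$ of order $\Norm(\fp^{e e_\fp})$.

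For the level structure, since $\deg V_\cA$ is a power of $p$ and $\fn$ is prime to $p$, the isogeny $V_\cA$ induces an $\fo$-linear isomorphism $\cB[\fn]\xrightarrow{\sim}\cA[\fn]$. I would therefore define the $\mu_\fn$-level structure on $\cB$ as $\mu_\cB:=(V_\cA|_{\cB[\fn]})^{-1}\circ\mu$; being the composite of the closed immersion $\mu\colon\mu_\fn\hookrightarrow\cA$ with an isomorphism, it is again an $\fo$-linear closed immersion.

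The hard part is the polarization. Because $\alpha$ annihilates $H_\fp=\ker V_\cA$, we have $H_\fp\subseteq\cB[\alpha]$, so the universal property of the quotient $V_\cA\colon\cB\to\cB/H_\fp=\cA$ produces a unique $\fo$-linear isogeny $W\colon\cA\to\cB$ with $W\circ V_\cA=[\alpha]_\cB$, and then $V_\cA\circ W=[\alpha]_\cA$. I would define $\lambda_\alpha\colon\cB^\vee\to\cB\otimes_\fo\fc$ as the untwist of the transported polarization, namely as the unique map satisfying $([\alpha]_\cB\otimes\id_\fc)\circ\lambda_\alpha=(W\otimes\id_\fc)\circ\lambda\circ W^\vee$. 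The crux is to show that the right-hand side is indeed divisible by $[\alpha]_\cB$ with an isomorphism as quotient; this reduces to the kernel identity $\ker\big((W\otimes\id_\fc)\circ\lambda\circ W^\vee\big)=\cB^\vee[\alpha]$, which follows from a degree count ($\deg W=\Norm(\alpha)$ on each side) once one checks $\lambda\big((\ker V_\cA)^\vee\big)=(\ker W)\otimes\fc$ using the compatibility of $\lambda$ with the Weil pairings together with $\ker W=V_\cA(\cB[\alpha])$. Finally I would verify the two defining properties of a $\fc$-polarization: the resulting $\lambda_\alpha$ has trivial degree in the $\fc$-twisted sense precisely because $(\alpha)=\fp^{e e_\fp}=\#\ker V_\cA$, so its type is again $\fc$; the total positivity of $\alpha$ guarantees that $\lambda_\alpha$ carries $\fc_+$ to polarizations (and not merely to symmetric isomorphisms); and the congruence $\alpha\equiv 1\pmod{\fn}$ ensures that $\lambda_\alpha$ is compatible with the level structure $\mu_\cB$ transported above. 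This last bundle of positivity and type verifications, resting on the precise numerical and sign properties of $\alpha$, is where the real content lies.
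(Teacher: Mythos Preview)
Your approach is essentially correct, but it takes a different and considerably more laborious route than the paper. The paper works with the \emph{other} natural isogeny, namely the $p$-primary quotient map $\cA^{(p^e)}\to\cB$: since $\cA^{(p^e)}$ inherits its $\fc$-polarisation and $\mu_\fn$-level structure from $\cA$ by base change, the level structure passes directly to $\cB$, and for the polarisation the paper simply invokes \cite[\S1.9]{KL} to obtain a $(p^e\alpha^{-1})\fc$-polarisation on $\cB$, which is then converted to a $\fc$-polarisation via the isomorphism of fractional ideals given by multiplication by $\alpha p^{-e}$. This black-box citation replaces your entire kernel computation $\ker\big((W\otimes\id)\circ\lambda\circ W^\vee\big)=\cB^\vee[\alpha]$, which you yourself flag as ``where the real content lies'' and leave only partially verified.

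Two minor remarks on your write-up. First, your claim $\deg W=\Norm(\alpha)$ and the degree match are correct, but the Weil-pairing step you sketch (reducing the kernel identity to $\lambda((\ker V_\cA)^\vee)=(\ker W)\otimes\fc$) would need to be made precise; it is standard but not immediate. Second, your final sentence about $\alpha\equiv 1\pmod\fn$ ensuring compatibility of $\lambda_\alpha$ with $\mu_\cB$ is superfluous: in the moduli problem the polarisation and the level structure are independent data with no compatibility condition, so nothing needs to be checked here. (That congruence is used later, in Lemma~\ref{lem:phisC}, for an entirely different purpose.)
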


\begin{proof} As $\cA^{(p^e)}\to \cB$ is a $p$-primary isogeny, the $\mu_{\fn}$-level structure on $\cA^{(p^e)}$ yields one on~$\cB$. 

Regarding the polarisation, following a suggestion of the referee (see also \cite[\S1.9]{KL}), we claim that the kernel of the composed isogeny 
\[ \delta: \cB \otimes_{\fo} \fc \xrightarrow{V_\cA \otimes 1} \cA \otimes_{\fo} \fc \xrightarrow{\lambda^{-1}} \cA^{\vee} \xrightarrow{V_\cA^{\vee}} \cB^{\vee}\] 
equals the $\alpha$-torsion of~$\cB \otimes_{\fo} \fc$, {\it i.e.},  $\ker(\delta)$ is $\alpha$-torsion and has the same order as $(\cB \otimes_{\fo} \fc)[\alpha]$. As the order of finite flat group schemes is locally constant, it suffices to check this point-wise on 
the ordinary locus of $Y$  which is dense.

As $\Ver^e$ is \'{e}tale  at an ordinary closed point $y \in Y$, its kernel is isomorphic to the constant group scheme  given by $\fo/p^e \fo$, whence $\ker(V_{\cA_y}) \simeq   \fo/\alpha \fo$. Consequently, the kernel of the dual isogeny $V_{\cA_y}^{\vee}$ is  isomorphic to the Cartier dual $\mu_{\alpha \fo}$  of $\fo/\alpha \fo$.  This gives us a short  exact sequence of finite flat commutative group schemes 
\[0 \to \fc/\alpha\fc\simeq\ker(V_{\cA_y}) \otimes_{\fo} \fc \hookrightarrow \ker(\delta_y) 
\xrightarrow{\lambda^{-1} \circ (V_{\cA_y} \otimes 1)} \ker(V_{\cA_y}^\vee)\simeq \mu_{\alpha \fo} \to 0. \]

As the connected-\'etale sequence of any finite flat group scheme over a perfect field splits (see \cite[\S 3.7]{tate-FLT}), we deduce that $\ker(\delta_y)$  is isomorphic to the group scheme $(\fc/\alpha\fc)\times \mu_{\alpha \fo}$. In particular $\ker(\delta_y)$ is  $\alpha$-torsion and has the same order as $(\cB_y \otimes_{\fo} \fc)[\alpha]$. 
Therefore, it follows that  $\ker(\delta) = (\cB \otimes_{\fo} \fc)[\alpha]$. 

Since $(\cB \otimes_{\fo} \fc)/(\cB \otimes_{\fo} \fc)[\alpha]$ is canonically isomorphic to $\cB \otimes_{\fo} (\alpha^{-1} \fc)$, we deduce an  
isomorphism $\cB \otimes_{\fo}  (\alpha^{-1} \fc) \xrightarrow{\sim} \cB^\vee$ the inverse of which is the desired $\alpha^{-1}\fc$-polarisation~$\lambda_\alpha$.
\end{proof}

We now verify that the HBAV $\cB/Y$ satisfies the Rapoport condition. Recall that $\omega_{\cA / Y} = s_* \Omega^1_{\cA / Y}$ and $\omega_{\cB/Y}:=t_* \Omega^1_{\cB / Y}$.
\begin{lem}\label{lem:iso-away}
For any  $\tau \in \overline{\Sigma} \setminus\overline{\Sigma}_\fp$,  the map $V_{\cA,\tau}^*: \omega_{\cA / Y,\tau} \to \omega_{\cB / Y,\tau}$ is an isomorphism.

On the other hand, if $\tau \in \overline{\Sigma}_{\fp}$, then the isogeny $\cA^{(p^e)} \to \cB$ induces an isomorphism 
\[\omega_{\cB/Y,\tau} \simeq \omega_{\cA^{(p^e)}/Y,\tau} \simeq(\Fr^e)^*\omega_{\cA/Y,\phi^{-e}\circ \tau}.\]
\end{lem}

\begin{proof}
Let $r_{\fp}$ be the projection of $\fo/(p^e)$ on its  $\fp$-primary component and let $\gamma \in \fo$ be such that its image in $\fo/(p^e)$ represents $r_{\fp}$.
The the image of  $\gamma' =1-\gamma $  in $\fo/(p^e)$ represents the complementary idempotent
$r_\fp' = 1 - r_\fp$. 
As $\gamma'$ kills $\ker(V_\cA)$, the isogeny  $ \cB \xrightarrow{ \gamma'\cdot} \cB$ factors through~$V_\cA$,  yielding a factorisation
\[\xymatrix{
\omega_{\cB/Y} \ar[r] \ar@/^1.5pc/@{->}[rr]^{ \gamma'\cdot} & \omega_{\cA/Y} \ar@{->}[r]^{V_{\cA}^*} & \omega_{\cB/Y}
}\]
If $\fp' \mid p$ and $\fp' \neq \fp$, the projection of $\gamma'$ on the $\fp'$-component of $\fo/(p^e)$ is $1$. Hence, it induces  the identity  on the $\fp'$-component of $\omega_{\cB/Y}$.
So the map $V_{\cA}^*$ is split on the $\fp'$-component and hence $\omega_{\cB/Y,\tau}$ is isomorphic to a direct summand of $\omega_{\cA/Y,\tau}$ for all $\tau \in \overline{\Sigma}_{\fp'}$.
Recall that both $\omega_{\cB/Y,\tau}$ and $\omega_{\cA/Y,\tau}$ are locally free sheaves over $Y$ of the same rank.
 Therefore, after passing to their stalks, we conclude that $V_{\cA,\tau}^*$ is an isomorphism for all $\tau \in \overline{\Sigma}_{\fp'}$.

Similarly, as $\gamma$ annihilates the kernel of the isogeny $\cA^{(p^e)} \to \cB$,  we obtain  
an isomorphism between $\omega_{\cA^{(p^e)}/Y,\tau}$ and $\omega_{\cB/Y,\tau}$ for all $\tau \in \overline{\Sigma}_{\fp}$.
This proves the lemma.
\end{proof}

We get a $\fc$-polarisation on $\cB$ from the $\alpha^{-1}\fc$-polarisation $\lambda_{\alpha}$ (which is obtained in Lemma~\ref{lem:polquo})  after identifying  $\alpha^{-1}\fc$ with $\fc$ by multiplication by $\alpha$.
Thus, using Lemma~\ref{lem:iso-away},  the universal property of $\cA \to Y$ yields a Cartesian diagram
\begin{align}\label{eq:BAX}
\xymatrix@R=0.5em{
\cB \ar@{->}[rr] \ar@{->}[dd]^{t}   && \cA \ar@{->}[dd]^{s}, \\
&\Box & \\
Y\ar@{->}[rr]^{\phi_{\alpha}}  && Y,\\}
\end{align}
from which we deduce a natural isomorphism  $\phi_{\alpha}^* \omega_{\cA/Y} \xrightarrow{\sim} \omega_{\cB/Y}$ of $\fo \otimes \cO_{Y}$-modules.

Let $k = \sum_{\sigma \in \Sigma}k_\sigma \sigma \in \ZZ[\Sigma]$ be a weight such that $k_\sigma = 0$ for all $\sigma \in \Sigma_\fp$.
By  Lemma~\ref{lem:iso-away} 
\begin{align}\label{eq:adj}
 V_{\cA}^*: \omega_{\cA/Y}^{\otimes k} \xrightarrow{\sim} \omega_{\cB/Y}^{\otimes  k}.
\end{align}

\begin{defi}\label{defi:Phipe}
Let $k = \sum_{\sigma \in \Sigma}k_\sigma \sigma \in \ZZ[\Sigma]$ be a weight such that $k_\sigma = 0$ for all $\sigma \in \Sigma_\fp$.
The partial Frobenius  operator $\Phi_{\fp^e}$ is defined as the composition of the  adjunction morphism coming from \eqref{eq:BAX} with \eqref{eq:adj} 
\[\Phi_{\fp^e} : \rH^0(Y, \omega_{\cA/Y}^{\otimes k}) \xrightarrow{\phi_{\alpha}^*} \rH^0(Y, \omega_{\cB/Y}^{\otimes k})
\underset{\sim}{\xrightarrow{(V_{\cA}^*)^{-1}}}  \rH^0(Y, \omega_{\cA/Y}^{\otimes k}).\]
\end{defi}

We next study the effect of $\Phi_{\fp^e}$ on $q$-expansions. To this end, we recall the definition and properties of Tate objects.
For fractional ideals $\fa$, $\fb$, $\fc$ of $\fo$ such that $\fa\fb\subset\fc$ and a  cone $C$ in $\fc^*_+$, we let 
\[T_{\fa,\fb} = (\GG_m \otimes \fa^*) / q^\fb \to \overline{S}_C^{\circ}= \Spec(R^\circ_C)\]
be the Tate HBAV over the Noetherian algebra $R^\circ_C\supset \FF[[q^{\xi}, \xi \in \fc_+]]$
(for more details we refer to  \cite[\S2]{dimdg}, where  $R^\circ_C$ is denoted by  $R_C^\wedge\otimes_{R_C} R$).
It is equipped with a $\mu_{\fn}$-level  structure which depends on the choice of an isomorphism between $\fa/\fn\fa$ and $\fo/\fn$. Moreover, 
 the  natural isomorphism 
 \[\lambda_{\fa,\fb}: T_{\fa,\fb}^\vee=T_{\fb,\fa} \to T_{\fa,\fb} \otimes_{\fo} (\fa\fb^{-1})\]
endows $T_{\fa,\fb}$ with a canonical $\fa\fb^{-1}$-polarisation. Note that 
$ T_{\fc,\fo} = (\GG_m \otimes \fc^*) / q^\fo $  is a Tate HBAV  at  the standard cusp $\infty_{\fc}$ of $Y$ fitting, by  
 universality of $\cA/Y$, into a Cartesian diagram
\begin{align}\label{eq:cusp}
 \xymatrix@R=0.5em{
T_{\fc,\fo} \ar@{->}[rr]\ar@{->}[dd]   && \cA \ar@{->}[dd]^{s} \\
&\Box & \\
\overline{S}_C^{\circ}\ar@{->}[rr]^{\alpha_Y}  && Y.\\}
\end{align}
This gives a natural isomorphism $\alpha_Y^* \omega_{\cA/Y} \simeq \omega_{T_{\fc,\fo}/\overline{S}_C^{\circ}}$
and further, by adjunction and  choice of canonical trivialisations using \eqref{eq:identification}, we obtain a $q$-expansion map at the cusp $\infty_{\fc}$:
\[\rH^0(Y, \omega_{\cA/Y}^{\otimes k}) \xrightarrow{\alpha_Y^*} \rH^0(\overline{S}_C^{\circ}, \omega_{T_{\fc,\fo}/\overline{S}_C^{\circ}}^{\otimes k}) \simeq R^\circ_C.\]

Next we describe $\Ver^e$ on Tate objects.
Define $T_{\fc,\fo}^{(p^e)} = T_{\fc,\fo} \times_{\overline{S}_C^{\circ},\Fr^e} \overline{S}_C^{\circ}$ as the base change by the $e$-th power of absolute Frobenius. Note that $T_{\fc,\fo}^{(p^e)} = T_{p^e \fc,\fo}$ and the relative Frobenius map
$\Frob_{T_{\fc,\fo}}^e : T_{\fc,\fo} \to T_{p^e \fc,\fo}$ is the  map induced by the inclusion $p^e \fc \hookrightarrow \fc$.
The $p^e$-th Verschiebung is the dual of the $p^e$-th relative Frobenius on $T_{\fc,\fo}^{\vee}$, {\it i.e.}, $\Ver^e = (\Frob_{T_{\fc,\fo}^{\vee}}^e)^{\vee}$. We do not identify $((T_{\fc,\fo}^{\vee})^{(p^e)})^{\vee}$ with $T_{\fc,\fo}^{(p^e)}$ (as is usually done while defining Verschiebung) in order to get the desired maps on Tate objects. In particular, $\Frob_{T_{\fc,\fo}^{\vee}}^e : T_{\fo,\fc} \to T_{p^e\fo,\fc}$ is the map induced by the inclusion $p^e\fo \hookrightarrow \fo$. Therefore, its dual map $\Ver^e : T_{\fc,p^e\fo} \to T_{\fc,\fo}$ is the natural projection obtained by going modulo $q^{\fo}$.

Our next aim is to specialise $\phi_{\alpha}$ to the Tate objects. It follows from the previous paragraph that  the base change of \eqref{eq:triangle} to $\overline{S}_C^{\circ}$ is given by the following commutative diagram:
\[
\xymatrix@R=1em{
T_{\fc,\alpha\fo} \ar@{->}[rr]^{V_T} \ar@{->}[dr]  &  &  T_{\fc,\fo}  \ar@{->}[dl]. \\
& \overline{S}_C^{\circ} \\}
\]
where the map $V_T$ is the natural projection obtained by going modulo $q^{\fo}$. 
Combining  with \eqref{eq:BAX}, we get the following Cartesian diagram:
\begin{align}\label{eq:tate1}
\xymatrix@R=0.5em{
T_{\fc,\alpha\fo} \ar@{->}[rr] \ar@{->}[dd]   && \cB \ar@{->}[rr] \ar@{->}[dd]^{t} && \cA \ar@{->}[dd]^{s} \\
&\Box && \Box &\\
\overline{S}_{ C}^{\circ} \ar@{->}[rr]^{\alpha_Y}  && Y\ar@{->}[rr]^{\phi_{\alpha} } && Y.\\}
\end{align}

On the other hand, considering  the $\fc$-polarized HBAV $T_{\fc,\fo}$  over $\overline{S}_{\alpha C}^{\circ}$  gives a Cartesian diagram
\begin{align}\label{eq:tate2}
\xymatrix@R=0.5em{
T_{\fc,\alpha\fo} \ar@{->}[rr] \ar@{->}[dd]   && T_{\fc,\fo} \ar@{->}[rr]\ar@{->}[dd]   && \cA \ar@{->}[dd]^{s} \\
&\Box && \Box & \\
\overline{S}_{ C}^{\circ} \ar@{->}[rr]^{f_{\alpha}}  && \overline{S}_{\alpha C}^{\circ} \ar@{->}[rr]^{\alpha'_Y}  && Y, \\}
\end{align}
where $f_{\alpha}$ is induced by the morphism $R^\circ_{\alpha C} \to R^\circ_{C}$ sending $q^{\xi}$ to $q^{\alpha\xi}$.
We would like to emphasise  that $\alpha C$ is considered as a  cone in $\fc^*_+$, hence the dual cone (used in 
the construction of $R^\circ_{\alpha C}$, see  \cite[\S2]{dimdg}) is considered as a cone in $\fc$ (and not in $\alpha^{-1}\fc$).
In particular, the morphism $R^\circ_{\alpha C} \to R^\circ_{C}, q^{\xi}\mapsto q^{\alpha\xi}$ is not \'etale.

\begin{lem}\label{lem:phisC}
Under the notation developed above, $\phi_\alpha \circ \alpha_Y = \alpha'_Y \circ f_{\alpha}$.
\end{lem}

\begin{proof}
The proof proceeds by showing that the $\fc$-polarisation and $\mu_{\fn}$-level structure on $T_{\fc,\alpha\fo}$ obtained from the base change in ~\eqref{eq:tate1} coincide with the ones obtained from the base change in~\eqref{eq:tate2}. This, along with the universality of $\cA/Y$, implies $\phi_\alpha \circ \alpha_Y = \alpha'_Y \circ f_{\alpha}$.

As Mumford's construction of Tate objects presented in \cite[\S2]{dimdg} is functorial in $(\fa,\fb,\fc)$ and $C$, we deduce that 
the $\fc$-polarisation on $T_{\fc,\alpha\fo}$ arising from \eqref{eq:tate2} is obtained from $\lambda_{\fc,\alpha\fo}$ after identifying  $\alpha^{-1}\fc$ with $\fc$ by multiplication by $\alpha$.

We now derive the $\fc$-polarisation on $T_{\fc,\alpha\fo}$ via the base change in~\eqref{eq:tate1}. 
To do this, we proceed as in the proof of Lemma~\ref{lem:polquo} to first obtain a $\alpha^{-1}\fc$-polarisation on $T_{\fc,\alpha\fo}$ from $\lambda_{\fc,\fo}$. From the proof of Lemma~\ref{lem:polquo}, it follows that the kernel of the isogeny
\[T_{\fc,\alpha\fo} \otimes_{\fo} \fc \to T_{\fc,\fo} \otimes_{\fo} \fc \to T_{\fo, \fc} \to T_{\alpha\fo,\fc}.\]
is just the $\alpha$-torsion of $T_{\fc,\alpha\fo} \otimes_{\fo} \fc$.
Here the first map is induced by $V_T$ (the natural projection given by going modulo $q^{\fo}$), the second map is $\lambda_{\fc,\fo}^{-1}$, and the final map is induced by the inclusion $\alpha\fo \subset \fo$.
Therefore, this composition of maps induces an isomorphism 
\[\lambda : T_{\fc,\alpha\fo}^\vee=T_{\alpha\fo,\fc} \xrightarrow{\sim} \left(T_{\fc,\alpha\fo} \otimes_{\fo} \fc \right)\otimes_{\fo} \alpha^{-1}\fo = T_{\fc,\alpha\fo} \otimes_{\fo} \alpha^{-1}\fc,\]
which is the $\alpha^{-1}\fc$-polarisation on $T_{\fc,\alpha\fo}$ induced from $\lambda_{\fc,\fo}$.
From the description of the maps above, it follows that $\lambda=\lambda_{\fc,\alpha\fo}$.
Hence, the $\fc$-polarisation on $T_{\fc,\alpha\fo}$ via the base change in ~\eqref{eq:tate1} is obtained from $\lambda_{\fc,\alpha\fo}$ by identifying $\alpha^{-1}\fc$ with $\fc$ by multiplication by $\alpha$.
Therefore, it follows that these two $\fc$-polarisations on $T_{\fc,\alpha\fo}$ coincide.

As $\alpha \equiv 1 \pmod{\fn}$, the multiplication by $\alpha$ map preserves the $\mu_{\fn}$-level structure of $T_{\fc,\fo}$. 
Hence, the $\mu_{\fn}$-level structure on $T_{\fc,\alpha\fo}$ induced by $f_{\alpha}$ is same as the one coming from the quotient map $T_{\fc,p^e\fo} \to T_{\fc,\alpha\fo}$. This concludes the proof of the lemma.
\end{proof}

We are now ready to compute the effect of $\Phi_{\fp^e}$ on $q$-expansions at $\infty_{\fc}$.

\begin{prop}\label{cor:Sknull}
 Let $\bar k=\sum_{\tau \in \Sigmabar} k_\tau \tau \in \ZZ[\Sigmabar]$ such that $k_\tau=0$ for all $\tau \in \Sigmabar_\fp$.
Then the map $\Phi_{\fp^e}$ defines an endomorphism of $M_{\bar k}^\AG(\fc,\fn;\FF)$, sending $f=\sum_{\xi\in \fc_+}a_{\xi}q^{\xi}$ to $\Phi_{\fp^e}(f)=\sum_{\xi\in \fc_+} a_{\xi}q^{\alpha\xi}$.
In particular, the restriction of $\Phi_{\fp^e}$ to $S_{\bar k}^\AG(\fc,\fn;\FF)$ is injective and nilpotent, hence $S_{\bar k}^\AG(\fc,\fn;\FF)=\{0\}$.
\end{prop}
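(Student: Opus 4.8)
The whole statement reduces to computing the action of $\Phi_{\fp^e}$ on $q$-expansions at $\infty_\fc$, which is exactly what the preceding Tate-object analysis was set up to do. First I would specialise the defining composition of Definition~\ref{defi:Phipe} to the Tate object by base changing the cartesian square~\eqref{eq:BAX} to $\overline{S}_C^{\circ}$; this produces the square~\eqref{eq:tate}, whose lower horizontal arrow $\phi_{C,\alpha}$ is, by Lemma~\ref{lem:phisC}, induced by $q^\xi \mapsto q^{\alpha\xi}$. Hence the first arrow $(\phi_\cA/\phi_\alpha)^*$ carries a form of $q$-expansion $\sum_\xi a_\xi q^\xi$ to the section of $\omega_{\cB/Y}^{\otimes k}$ whose $q$-expansion, read through the canonical trivialisation attached to $T_{\fc,\alpha\fo}$, is $\sum_\xi a_\xi q^{\alpha\xi}$. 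It then remains to verify that the isomorphism $\beta$ of~\eqref{eq:adj} leaves $q$-expansions unchanged: on Tate objects $V_\cA$ specialises to the projection $V_T\colon T_{\fc,\alpha\fo}\to T_{\fc,\fo}$ given by reduction modulo $q^{\fo}$, and since $V_T$ respects the canonical invariant differentials coming from $\GG_m\otimes\fc^*$, the induced map $\beta=(V_\cA^*)^{-1}$ is the identity on canonical trivialisations. Assembling the two steps gives the asserted formula $\Phi_{\fp^e}(f)=\sum_\xi a_\xi q^{\alpha\xi}$.

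Granting this, the endomorphism assertion is immediate, since $\Phi_{\fp^e}$ was constructed as the composition of an honest pullback with the isomorphism $\beta$ — available precisely because $k_\sigma=0$ for $\sigma\in\Sigma_\fp$ makes $V_\cA^*$ an isomorphism on every relevant $\tau$-component by Lemma~\ref{lem:iso-away} — so it maps $\rH^0(Y,\omega_{\cA/Y}^{\otimes k})=M_k^\AG(\fc,\fn;\FF)$ into itself. Injectivity on cuspforms is then formal: the map $\xi\mapsto\alpha\xi$ is injective on $\fc_+$, so if a cuspform $f=\sum_{\xi\in\fc_+}a_\xi q^\xi$ satisfies $\Phi_{\fp^e}(f)=0$ then every $a_\xi$ vanishes, whence $f=0$ by the $q$-expansion principle at $\infty_\fc$.

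For topological nilpotency I would iterate the $q$-expansion formula to get $\Phi_{\fp^e}^n(f)=\sum_\xi a_\xi q^{\alpha^n\xi}$. Because $\alpha$ is totally positive with $(\alpha)=\fp^{ee_\fp}$, so that $\mathrm{Nm}(\alpha)=p^{ee_\fp f_\fp}>1$, the arithmetic–geometric mean inequality yields $\mathrm{Tr}_{F/\QQ}(\alpha^n\xi)\geqslant d\,\bigl(\mathrm{Nm}(\alpha)^n\,\mathrm{Nm}(\xi)\bigr)^{1/d}$, and since $\mathrm{Nm}(\xi)$ is bounded below by a positive constant as $\xi$ ranges over the totally positive elements of the lattice $\fc$, this tends to $\infty$ uniformly over the support of $f$. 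As large trace forces $q^{\alpha^n\xi}$ into arbitrarily high powers of the cusp ideal $\fm_C\subset R^\circ_C$ (the ideal generated by $\{q^\xi : \xi\in\fc_+\}$), each iterate $\Phi_{\fp^e}^n(f)$ lies in $\fm_C^N$ once $n$ is large, which is the asserted topological nilpotency on $S_k^\AG(\fc,\fn;\FF)$. Finally, $S_k^\AG(\fc,\fn;\FF)$ is finite-dimensional, and as $R^\circ_C$ is Noetherian and hence $\fm_C$-adically separated, the decreasing filtration $S_k^\AG(\fc,\fn;\FF)\cap\fm_C^N$ of this finite-dimensional space stabilises to $0$; thus its $q$-adic subspace topology is discrete, topological nilpotency upgrades to genuine nilpotency, and an injective nilpotent endomorphism can only act on the zero space, giving $S_k^\AG(\fc,\fn;\FF)=0$.

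The crux is the first paragraph: establishing that $\beta$ acts trivially on $q$-expansions is where the painstaking identifications of $\cB$ with $T_{\fc,\alpha\fo}$ and of its polarisation and level data (Lemmas~\ref{lem:polquo} and~\ref{lem:phisC}) are genuinely consumed, since one must match canonical trivialisations across the étale isogeny $V_\cA$ and its Tate incarnation $V_T$. Once the clean formula $\Phi_{\fp^e}(f)=\sum_\xi a_\xi q^{\alpha\xi}$ is secured, both injectivity and nilpotency, and hence the vanishing, follow essentially formally.
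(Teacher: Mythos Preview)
Your proof is correct and follows essentially the same approach as the paper: specialise the defining composition to the Tate object, use Lemma~\ref{lem:phisC} for the $(\phi_\cA/\phi_\alpha)^*$ step, and observe that $\beta$ acts trivially on $q$-expansions because $V_T$ is induced by the identity on the torus $\GG_m\otimes\fc^*$. The only difference is that where the paper dismisses injectivity, topological nilpotency, and the vanishing of $S_k^\AG(\fc,\fn;\FF)$ in a single line (``The rest is a direct consequence of the $q$-expansion Principle''), you spell these out via the AM--GM trace bound and the finite-dimensionality argument; this is a legitimate way to unpack what the paper leaves implicit.
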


\begin{proof} By definition, the $q$-expansion of $\Phi_{\fp^e}(f)$ at  $\infty_\fc$ is the image of $f$ under the map 
$\rH^0(Y, \omega_{\cA/Y}^{\otimes k}) \to \rH^0(\overline{S}_{ C}^{\circ}, \omega_{T_{\fc,\alpha\fo}/\overline{S}_{ C}^{\circ}}^{\otimes k})$ 
coming from the Cartesian diagram \eqref{eq:tate1}, followed by $(V_T^*)^{-1}$.  By  Lemma~\ref{lem:phisC}, one can use instead the Cartesian diagram \eqref{eq:tate2}. 
Hence, the $q$-expansion of $\Phi_{\fp^e}(f)$ at  $\infty_\fc$  can be obtained as  the image of $f$ under the adjunction morphism 
\[\rH^0(Y, \omega_{\cA/Y}^{\otimes k}) \xrightarrow{\alpha'_Y{}^*} 
\rH^0(\overline{S}_{\alpha C}^{\circ}, \omega_{T_{\fc,\fo}/\overline{S}_{\alpha C}^{\circ}}^{\otimes k})  \xrightarrow{f_{\alpha}^*} 
 \rH^0(\overline{S}_{ C}^{\circ}, \omega_{T_{\fc,\alpha\fo}/\overline{S}_{ C}^{\circ}}^{\otimes k}) \]
followed by the map 
$(V_T^*)^{-1}:  \rH^0(\overline{S}_{ C}^{\circ}, \omega_{T_{\fc,\alpha\fo}/\overline{S}_{ C}^{\circ}}^{\otimes k}) \xrightarrow{\sim} \rH^0(\overline{S}_{ C}^{\circ}, \omega_{T_{\fc,\fo}/\overline{S}_{ C}^{\circ}}^{\otimes k})$. 
Since the $q$-expansion $\sum_{\xi\in \fc_+} a_{\xi}q^{\xi}$ of $f$ at the cusp $\infty_\fc$ is independent of a particular choice of a cone, it is given by the image of $f$ under the map $\alpha'_Y{}^* : \rH^0(Y, \omega_{\cA/Y}^{\otimes k}) \to \rH^0(\overline{S}_{\alpha C}^{\circ}, \omega_{T_{\fc,\fo}/\overline{S}_{\alpha C}^{\circ}}^{\otimes k})$. As  $f_{\alpha}$ is induced by the map sending $q^{\xi}$ to $q^{\alpha\xi}$ we deduce that  $f_{\alpha}^*(\sum_{\xi\in \fc_+} a_{\xi}q^{\xi}) = \sum_{\xi\in \fc_+}  a_{\xi}q^{\alpha \xi}$. Finally, 
as $V_T$ is induced from the identity map on the torus $\GG_m \otimes \fc^*$, the morphism $V_T^*$  is the identity on the $q$-expansions,
{\it i.e.},  $(V_T^*)^{-1}(\sum_{\xi\in \fc_+} a_{\xi}q^{\alpha\xi}) = \sum_{\xi\in \fc_+} a_{\xi}q^{\alpha\xi}$, yielding the desired formula. 

The rest  follows from  the $q$-expansion principle and the finite dimensionality of $S_{\bar k}^\AG(\fc,\fn;\FF)$. 
\end{proof}

\subsection{Refined injectivity criterion for $\Theta$-operators} \label{sec:theta}

The purpose of this section is to extend the definition of the Andreatta--Goren operators $\Theta^\AG_{\tau}$ for $\tau \in \Sigmabar$ and prove an injectivity criterion refining \cite[Prop.~15.10]{AG} when $p$ ramifies in $F$. 
Given  $f \in M_k^\Katz(\fc,\fn;\FF)$, by Lemma~\ref{lem:AGtoRX} and the discussion after it, $\Theta^\AG_{\tau}\left(\frac{f}{H_k^\RX}\right)$  defines a meromorphic section over $X$, whose poles lie outside  $X^\Ra$.  A careful study of the order at these poles will  first show that multiplication  by $H_k^\RX$ leads to a holomorphic section and further allow us to   establish  Proposition~\ref{prop:inj} (injectivity criterion).
If $p$ is unramified in~$F$, our $\Theta$-operators coincide exactly with those of Andreatta--Goren, and in that case everything that we prove here has already been proved in \cite{AG} (see also \cite{DS}).

The construction of  $\Theta^\AG_{\tau}$  goes via the Kummer cover.
By definition, the ordinary locus  $X^{\ord}$ of $X^\Ra$ is endowed with a Galois cover
$X^{\ord}(\mu_{(p)}) \to X^{\ord}$ with group $({\fo}/(p))^{\times}$, where $X(\mu_{(p)})$ is the Deligne--Pappas moduli space of level $p\fn$. 
Taking the quotient by the $p$-Sylow subgroup yields a
cover $\pi : X^{\kum} \to X^{\ord}$ with  group $\prod_{\fp\mid p}({\fo}/\fp)^{\times}$, called the {\em Kummer cover}. Let $\widetilde{\pi} : \widetilde{X} \to X$ be the normal closure of $X$ in $X^{\kum}$.
 It can be  described  explicitly using the generalised Hasse invariants as follows. 
For $\tau \in \Sigmabar$, we  write $\fp = \fp(\tau)$,  $f=f_{\fp}$ and we  let
\begin{align}\label{eq:mftau}
H_{\tau} = \prod_{j=0}^{f-1}(h_{\phi^{-j}\circ\tau})^{p^j}.
\end{align}
It is a modular form of weight $(p^f-1)\widetilde{\tau}$  and $X^{\kum}$ is  
obtained by adjoining a $(p^{f}-1)$-th root $s_{\tau}$ of it for all  $\tau \in \Sigmabar$. 
The nowhere vanishing section $s_{\tau}$ provides a trivialisation of the line bundle $\pi^*\omega_{\widetilde\tau}$ over $X^{\kum}$  (see \cite[Def.~7.4]{AG}), and by definition of the normalisation, it also defines a section over $\widetilde{X}$ (see \cite[Prop.~7.9]{AG}).
As
\begin{align}\label{eq:Htau}
H_{ \phi^{-1}\circ\tau}^p = H_{\tau} \cdot (h_{\tau})^{p^{f}-1},
\end{align}
$H_{ \phi^{-1}\circ\tau}^p / H_{\tau}$ is a $(p^{f}-1)$-th power, this construction does not depend on the choice of~$\tau \in \Sigmabar_{\fp}$.

Next we describe the Kodaira--Spencer maps.
By \cite[Thm.~2.9]{RX} there is a  decomposition 
\begin{align}\label{eq:dec-Omega}
\Omega^1_{X/\FF} = \bigoplus_{\tau\in \Sigmabar} \Omega^1_{X/\FF,\tau}
\end{align}
where  each  $\Omega^1_{X/\FF,\tau} $ is endowed with a filtration 
whose successive subquotients  are naturally isomorphic to $\omega_{\tau,i}^{\otimes 2}$ with $1 \leqslant  i \leqslant  e_\fp$ for $\fp = \fp(\tau)$ in descending order,
{\it i.e.},  $\omega_{\widetilde\tau}^{\otimes 2}=\omega_{{\tau,e_\fp}}^{\otimes 2}$ is naturally a quotient. 
As the map $\pi : X^{\kum} \to X^{\ord}$ is \'etale, we have $\Omega^1_{X^{\kum}/\FF} = \pi^*\Omega^1_{X^{\ord}/\FF}$,
the elements of which we view as meromorphic sections of the sheaf $\widetilde{\pi}^*\Omega^1_{X/\FF}$ over~$\widetilde{X}$.
Given a section of $\widetilde{\pi}^*\Omega^1_{X/\FF}$,  we denote by a subscript  $\tau \in \Sigmabar$ its projection onto the $\tau$-component via \eqref{eq:dec-Omega}. 
Consider the  surjective map 
\[\KS_\tau : \widetilde{\pi}^*\Omega^1_{X/\FF,\tau} \to \widetilde{\pi}^*\omega^{\otimes 2}_{\widetilde\tau}.\]

\begin{defi} 
Let $k = \sum_{\sigma \in \Sigma} k_\sigma \in \ZZ[\Sigma]$ and $k_\tau = \sum_{\sigma \in \Sigma_\tau} k_\sigma$ for $\tau \in \Sigmabar$.
Let $f \in M_k^\Katz(\fc,\fn;\FF)$.
Recall that $f/H_k^\RX\in M_{\bar k}^\AG(\fc,\fn;\FF)$ (see Lemma~\ref{lem:AGtoRX}).
We put
\[H_k^\AG =\prod_{\tau\in \Sigmabar} s_{\tau}^{k_\tau} \text{,  and } H_{k} = H_k^\AG \cdot \pi^* (H_k^\RX).\]
Similarly to \cite[Def.~7.19]{AG}, we further put
\[r(f) = \pi^*\big(f/H_k^\RX\big)/H_k^\AG = \pi^*(f) / H_{k} \in \rH^0(X^{\kum},\cO_{X^{\kum}})\]
where we restricted $f$ and $H_k^\RX$ to $X^\ord$.
\end{defi}

\begin{defi}\label{defi:theta}
For $\tau \in \Sigmabar$, we define the {\em generalised $\Theta$-operator} acting on  $f \in M_k^\Katz(\fc,\fn;\FF)$ as
\[ \Theta_{\tau}(f) = \KS_\tau\big(d(r(f))_{\tau}\big) \cdot H_{k} \cdot \pi^*( h_{\tau} )=
H_k^\RX\cdot\Theta^\AG_{\tau}\left(\frac{f}{H_k^\RX}\right) ,\]
viewed as an element of $\rH^0(X^{\kum}, \pi^*\omega^{\otimes k'})$ where\\
\begin{enumerate}
\item If $\tau \neq \phi^{-1}\circ \tau$, then $k'_\sigma = \begin{cases}
k_\sigma+1 & \textnormal{ if } \sigma = \sigma_{\tau, e_{\fp(\tau)}} = \widetilde{\tau},\\
k_\sigma+p & \textnormal{ if } \sigma = \sigma_{\phi^{-1}\circ\tau, e_{\fp(\tau)}} = \widetilde{\phi^{-1} \circ \tau},\\
k_\sigma   & \textnormal{ otherwise.}
\end{cases}$\\~\\
\item If $\tau = \phi^{-1}\circ \tau$, then $k'_\sigma = \begin{cases}
k_\sigma+p+1 & \textnormal{ if } \sigma = \sigma_{\tau, e_{\fp(\tau)}} = \widetilde{\tau},\\
k_\sigma   & \textnormal{ otherwise.}
\end{cases}$
\end{enumerate}
\end{defi}

We will now prove that $\Theta_{\tau}(f)$ yields an element of  $M_{k'}^\Katz(\fc,\fn;\FF)$. 
In order to prove this, we proceed as in \cite{AG} to calculate the poles of $d(r(f))_{\tau}$ along the  divisors of the generalised Hasse invariants.

Using the trivialisation of the line bundles $\pi^*\omega_{\widetilde{\tau}}$ given by the sections $s_{\tau}$ and the generalised Hasse invariants $h_{\tau,i}$'s for $i>1$, we get trivialisations of $\pi^*\omega_{\tau,i}$ for all $\tau \in \Sigmabar$ and $1 \leqslant i \leqslant e_{\fp(\tau)}$.
Using these trivialisations we can view the pullbacks $\widetilde{\pi}^*h_{\tau,i}$ and $\widetilde{\pi}^*h_{\tau}$ of Hasse invariants as functions over $\widetilde{X}$ (see \cite[\S 12.32]{AG} for more details), whose  differentials are denoted by $d(h_{\tau,i})$ and $d(h_{\tau})$, respectively  
(viewed  as meromorphic sections of $\widetilde{\pi}^*\Omega^1_{X/\FF}$).

For $\tau' \in \Sigmabar$, we let $\widetilde{Z}$ be an irreducible component of the effective Weil divisor of $\widetilde{X}$ associated to $\widetilde{\pi}^*(h_{\tau'})$ (see \S\ref{subsec:Hasse}). 
From the construction of $\widetilde{X}$ and \cite[\S9.3, Prop. 9.4]{AG} (see also \cite[\S12.32]{AG}), we can choose a uniformiser  $\delta$ at the generic point of $\widetilde{Z}$ such that $\delta^{p^{f_{\fp(\tau')}}-1} = H_{\tau'}$ (see~\eqref{eq:mftau} for the definition of $H_{\tau'}$).
We fix this choice from now on and let  $v_{\delta}$ be the corresponding normalised discrete valuation.
For the sake of readability, we will often drop $\widetilde{\pi}^*$ from the notation when pulling back Hilbert modular forms, especially generalised Hasse invariants; for instance, we usually write $v_\delta(h_\tau)$ for $v_\delta(\widetilde{\pi}^*(h_\tau))$.

We will first calculate $v_{\delta}((d\delta)_{\tau})$, where $(d\delta)_{\tau}$ is viewed as a meromorphic section of $\widetilde{\pi}^*\Omega^1_{X/\FF,\tau}$ over $\widetilde{X}$. 
We also prove some complementary results which will be used in the proof of the injectivity criterion.

\begin{lem}\label{lem:unif}
\begin{enumerate}[(i)]
\item\label{lem:unif:b} Let $\tau \in \Sigmabar$ different from~$\tau'$. Then $(d\delta)_{\tau}=0$.
\item\label{lem:unif:c} There is a unique $1 \leqslant i_0 \leqslant e_{\fp(\tau')}$ (depending on $\widetilde{Z}$) such that $v_\delta(h_{\tau',i_0}) = p^{f_{\fp(\tau')}}-1$ and $v_\delta(h_{\tau',i}) = 0$ for all $i \neq i_0$. 
Moreover, $v_{\delta}(h_{\tau'})=p^{f_{\fp(\tau')}}-1$ and $v_{\delta}(h_{\tau})=0$ if $\tau \neq \tau'$.
\item\label{lem:unif:d} $v_\delta(d(h_{\tau',i})) \geqslant 0$ for all~$i$ and for $i_0$ found in \eqref{lem:unif:c} , $v_\delta(d(h_{\tau',i_0})) = 0$.
\item\label{lem:unif:f} $v_{\delta}(s_{\tau'})=1$, $v_{\delta}(s_{\tau})=p^{j}$ if $\tau=\phi^j \circ \tau'$ and $v_{\delta}(s_{\tau})=0$ if $\tau \neq \phi^j \circ \tau'$ for any integer $j$.
\item\label{lem:unif:h} $v_{\delta}((d\delta)_{\tau'})=2-p^{f_{\fp(\tau')}}$.
\item\label{lem:unif:g} $(d\delta)_{\tau'} = D + g \cdot (d(h_{\tau',i_0}))_{\tau'}$ where
$g= -\delta^{2-p^{f_{\fp(\tau')}}} \cdot \big(\prod_{j=1}^{f_{\fp(\tau')}-1}(h_{\phi^{-j}\circ\tau'})^{{p^j}}\big) \cdot \big(\prod_{j \neq i_0}h_{\tau',j}\big)$ and
$D$ is a meromorphic section of $\widetilde{\pi}^* \Omega^1_{X/\FF,\tau'}$ such that $v_{\delta}(D) \geqslant  0$.
\item\label{lem:unif:i} $\KS_{\tau'}(d(h_{\tau',e_{\fp(\tau')}})_{\tau'}) \neq 0$ and if $\widetilde{Z}$ is an irreducible component of the effective Weil divisor of $\widetilde{X}$ associated with $\widetilde{\pi}^*(h_{\tau',e_{\fp}(\tau')})$, then  $v_{\delta}(\KS_{\tau'}(d(h_{\tau',e_{\fp(\tau')}})_{\tau'})s_{\tau'}^{-2})=-2$.
\end{enumerate}
\end{lem}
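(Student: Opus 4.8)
The plan is to localise at the generic point of $\widetilde{Z}$ and to exploit systematically that in characteristic $p$ the differential of a $p$-th power vanishes. Throughout write $\fp = \fp(\tau')$ and $f = f_{\fp(\tau')}$.

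First I would pin down $i_0$ and compute the ramification of $\widetilde{\pi}$ along $\widetilde{Z}$. As $\widetilde{Z}$ is a component of the divisor of $\widetilde{\pi}^*(h_{\tau'}) = \widetilde{\pi}^*\big(\prod_{i=1}^{e_\fp} h_{\tau',i}\big)$, it dominates a component of $\bigcup_i Z_{\tau',i}$; by Lemma~\ref{lem:complement} these divisors pairwise share no component, so there is a unique $i_0$ with $\widetilde{\pi}(\widetilde{Z}) \subseteq Z_{\tau',i_0}$, and $\widetilde{Z}$ dominates no $Z_{\tau,j}$ with $\tau\neq\tau'$. Writing $\eta_0$ for the generic point of $Z_{\tau',i_0}$ and $t$ for a uniformiser there, the key observation is that near $\eta_0$ the Kummer relation for the block $\Sigmabar_\fp$ reads $s_{\tau'}^{p^f-1} = \widetilde{\pi}^* H_{\tau'}$ with $H_{\tau'} = \prod_{j=0}^{f-1}(h_{\phi^{-j}\circ\tau'})^{p^j}$ (see \eqref{eq:mftau}); since $\phi^{-j}\circ\tau' \neq \tau'$ for $0 < j < f$, only the $j=0$ factor meets $\eta_0$, so $v_{\eta_0}(H_{\tau'}) = v_{\eta_0}(h_{\tau'}) = 1$. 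As $p\nmid p^f-1$ this is a tamely, totally ramified Kummer extension of degree $p^f-1$, the blocks at the other primes being unramified at $\eta_0$; hence $v_\delta(\widetilde{\pi}^*t) = p^f-1$ and $v_\delta = (p^f-1)\,v_{\eta_0}$ on pulled-back functions. Part~\ref{lem:unif:c} follows at once from $v_{\eta_0}(h_{\tau',i_0})=1$ and $v_{\eta_0}(h_{\tau',i})=0$ for $i\neq i_0$, and part~\ref{lem:unif:f} from $v_\delta(s_\tau) = v_{\eta_0}(H_\tau)$ together with the computation $v_{\eta_0}(H_\tau)=p^j$ for $\tau=\phi^j\circ\tau'$ and $0$ otherwise (alternatively, bootstrap \ref{lem:unif:f} from $v_\delta(s_{\tau'})=1$ through \eqref{eq:Htau}).

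Part~\ref{lem:unif:b} I would deduce from the local structure: the decomposition \eqref{eq:dec-Omega} reflects the factorisation of the deformation problem of $X$ according to $\Sigmabar$, and $\widetilde{Z}$ is cut out in the $\tau'$-factor alone, so a uniformiser at $\widetilde{Z}$ may be chosen to depend only on the $\tau'$-direction, forcing $(d\delta)_\tau=0$ for $\tau\neq\tau'$. Concretely, since $v_\delta(s_{\tau'})=1$ I would take $\delta=s_{\tau'}$, the unit trivialisation of $\omega_{\widetilde{\tau'}}$ needed to regard it as a function contributing only holomorphic terms. To get the explicit shape of $(d\delta)_{\tau'}$ I then differentiate $s_{\tau'}^{p^f-1}=H_{\tau'}$: using $p^f-1\equiv-1\pmod p$ gives $ds_{\tau'}=-s_{\tau'}^{2-p^f}\,dH_{\tau'}$, and since every factor of $H_{\tau'}$ with $j\geq1$ is a $p$-th power, $dH_{\tau'}=\big(\prod_{j=1}^{f-1}(h_{\phi^{-j}\circ\tau'})^{p^j}\big)\,dh_{\tau'}$. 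Expanding $h_{\tau'}=\prod_i h_{\tau',i}$ by the Leibniz rule, projecting onto the $\tau'$-component and isolating the $i=i_0$ summand yields exactly the decomposition \ref{lem:unif:g}, with $g=-\delta^{2-p^f}\big(\prod_{j=1}^{f-1}(h_{\phi^{-j}\circ\tau'})^{p^j}\big)\big(\prod_{j\neq i_0}h_{\tau',j}\big)$ and $D$ the sum of the summands with $i\neq i_0$.

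Finally I would run the valuation bookkeeping. The $h_{\tau',i}$ are regular near $\widetilde{Z}$, so $v_\delta(dh_{\tau',i})\geq0$, which is the first half of \ref{lem:unif:d}; and since $h_{\tau',i_0}$ is, up to a unit, a coordinate transverse to $\widetilde{Z}$, its differential is a local generator and $v_\delta(dh_{\tau',i_0})=0$. For $i\neq i_0$ the coefficient $\prod_{l\neq i}h_{\tau',l}$ carries the factor $h_{\tau',i_0}$ of valuation $p^f-1$, so each such summand has valuation at least $(2-p^f)+(p^f-1)=1$, giving $v_\delta(D)\geq0$ in \ref{lem:unif:g}; since $v_\delta(g)=2-p^f$ while $v_\delta\big((dh_{\tau',i_0})_{\tau'}\big)=0$, the $i=i_0$ term strictly dominates and $v_\delta\big((d\delta)_{\tau'}\big)=2-p^f$, which is \ref{lem:unif:h}. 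The crux, and the step I expect to be the main obstacle, is \ref{lem:unif:i}: one must show that $\KS_{\tau'}$ does not annihilate $(dh_{\tau',i_0})_{\tau'}$, i.e. that the differential of $h_{\tau',i_0}$ projects nontrivially onto the top graded piece $\omega_{\widetilde{\tau'}}^{\otimes2}$ of the filtration on $\Omega^1_{X/\FF,\tau'}$ in \eqref{eq:dec-Omega}. This is a genuinely local deformation-theoretic assertion about the Pappas--Rapoport model along the non-Rapoport (or non-ordinary) locus, and I would establish it by an explicit Kodaira--Spencer computation in coordinates adapted to the universal filtration, in the spirit of Andreatta--Goren; granting it, $\KS_{\tau'}\big((dh_{\tau',i_0})_{\tau'}\big)$ is nowhere vanishing near $\widetilde{Z}$ and its valuation is $0$.
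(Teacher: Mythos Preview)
Your approach is essentially the paper's: choose $\delta$ with $\delta^{p^{f}-1}=H_{\tau'}$, differentiate, kill the $p$-th power factors, expand $d(h_{\tau'})$ by Leibniz, and read off the valuations. The two points where you gesture at deformation theory are exactly where the paper inserts citations: for \ref{lem:unif:b} the paper invokes \cite[Lem.~12.34]{AG} to get $(d(h_{\tau'}))_{\tau}=0$ for $\tau\neq\tau'$, which is the rigorous content behind your ``factorisation of the deformation problem'' heuristic; and for the second half of \ref{lem:unif:d} and for \ref{lem:unif:i} the paper cites \cite[Thm.~3.10]{RX}, the local coordinate/tangent space computation on the Pappas--Rapoport model, which is precisely the Kodaira--Spencer input you flagged as the crux.
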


\begin{proof}
Recall that we have chosen $\delta$ such that $\delta^{p^{f_{\fp(\tau')}}-1} = H_{\tau'}$.
Hence 
\[-\delta^{p^{f_{\fp(\tau')}}-2}d\delta 
=  \big(\prod_{j=1}^{f_{\fp(\tau')}-1}(h_{\phi^{-j}\circ\tau'})^{{p^j}}\big) \cdot d(h_{\tau'}).  \]
Since $X^\Ra$ is Zariski dense in~$X$, it follows from \cite[Lem.~12.34]{AG} that $(d(h_{\tau'}))_{\tau} = 0$ if $\tau \neq \tau'$.
Hence, we get $d\delta=(d\delta)_{\tau'}$, which implies that $(d\delta)_{\tau}=0$ if $\tau \neq \tau'$.
Now
\[d(h_{\tau'}) = d(\prod_{i=1}^{e_{\fp(\tau')}}h_{\tau',i})
= \sum_{i=1}^{e_{\fp(\tau')}}  \big(\prod_{j \neq i}h_{\tau',j}\big) \cdot  d(h_{\tau',i}) .\]
Since $\delta$ is a  uniformiser at the generic point of $\widetilde{Z}$, there is a unique $i_0$ such that $v_{\delta}(h_{\tau',i_0})>0$.
Note that $v_{\delta}(h_{\tau',i})=0$ for $i \neq i_0$ and $v_{\delta}(d(h_{\tau',i})) \geqslant  0$ for all~$i$.
Moreover, it follows from \cite[Thm.~3.10]{RX} that $v_{\delta}(d(h_{\tau',i_0}))=0$.
So \eqref{lem:unif:c} and \eqref{lem:unif:d}  follow from the discussion above. 
Combining this with \eqref{eq:Htau} gives \eqref{lem:unif:f}.
Hence, $v_{\delta}(d( h_{\tau'} )) = v_{\delta}(\prod_{i \neq i_0}h_{\tau',i}(d(h_{\tau',i_0})))=0$ from which \eqref{lem:unif:h} follows and combining this with \eqref{lem:unif:c} and \eqref{lem:unif:d} gives us \eqref{lem:unif:g}.

We will now prove Statement~\eqref{lem:unif:i}.
Recall that, by \cite[Thm 2.9]{RX}, $\Omega^1_{X/\FF,\tau'}$ admits a canonical filtration whose successive subquotients are (isomorphic to) $\omega^{\otimes 2}_{\tau',i}$ with $1 \leqslant i \leqslant e_{\fp(\tau')}$.
Recall that $\KS_{\tau'}$ is the surjective map from $\widetilde{\pi}^*\Omega^1_{X/\FF,\tau'}$ onto its first subquotient $\widetilde{\pi}^*\omega^{\otimes 2}_{\tau',e_{\fp(\tau')}}$.
On the other hand, by \cite[Thm. 3.10]{RX}, $\Omega^1_{Z_{\tau',e_{\fp(\tau')}}/\FF,\tau'}$ admits a canonical filtration whose successive subquotients are (isomorphic to) $\omega^{\otimes 2}_{\tau',i}$ with $1 \leqslant i < e_{\fp(\tau')}$.
Here $Z_{\tau',e_{\fp(\tau')}} \subset X$ is the divisor of $h_{\tau',e_{\fp(\tau')}}$.
Therefore, we conclude that $\KS_{\tau'}(d(h_{\tau',e_{\fp(\tau')}})_{\tau'}) \neq 0$.
Since $s_{\tau'}$ gives a trivialisation of the line bundle $\pi^*\omega_{\widetilde{\tau'}}$, $v_{\delta}(\KS_{\tau'}(d(h_{\tau',e_{\fp(\tau')}})_{\tau'})s_{\tau'}^{-2})$ is well defined.
We conclude $v_{\delta}(\KS_{\tau'}(d(h_{\tau',e_{\fp(\tau')}})_{\tau'})s_{\tau'}^{-2})=-2$ by combining \cite[Thm. 3.10]{RX} with \eqref{lem:unif:d} and \eqref{lem:unif:f} (see also \cite[Prop. 12.37]{AG}).
This concludes the proof of the lemma.
\end{proof}

In order to compute $v_{\delta}(d(r(f))_{\tau})$, it is sufficient to work in the discrete valuation ring 
obtained by localising at the  generic point of $\widetilde{Z}$. 
Letting $r(f)=\frac{u}{\delta^n}$, with $v_{\delta}(u)=0$, we have 
\begin{align}\label{eq:val}
d(r(f))_{\tau} = \frac{(du)_{\tau}}{\delta^n}-\frac{nu(d\delta)_{\tau}}{\delta^{n+1}}.
\end{align}

\begin{lem}\label{lem:val}
Let $\tau \in \Sigmabar$ and let $f \in M_k^\Katz(\fc,\fn;\FF)$. Then
\begin{enumerate}[(i)]
\item\label{lem:val:a} $v_{\delta}((du)_{\tau}) \geqslant  \inf \{0,v_{\delta}((d{\delta})_{\tau})\}$,
\item\label{lem:val:b} $v_{\delta}(d(r(f))_{\tau}) \geqslant  v_{\delta}(r(f))$,  if $\tau \neq \tau'$,
\item\label{lem:val:c} $v_{\delta}(d(r(f))_{\tau'}) \geqslant  v_{\delta}(r(f))-(p^{f_{\fp(\tau')}}-2)$,  if $p|v_{\delta}(r(f))$,
\item\label{lem:val:d} $v_{\delta}(d(r(f))_{\tau'}) = v_{\delta}(r(f)) - (p^{f_{\fp(\tau')}}-1)$,  if $p \nmid v_{\delta}(r(f))$.
\end{enumerate}
\end{lem}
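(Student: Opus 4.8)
The plan is to isolate part~\eqref{lem:val:a} as the one substantial computation and to read off \eqref{lem:val:b}--\eqref{lem:val:d} from it by feeding the resulting bound into the identity~\eqref{eq:val}, while tracking the components of $d\delta$ recorded in Lemma~\ref{lem:unif}. Throughout I write $\mathcal{O}_\eta = \mathcal{O}_{\widetilde{X},\eta}$ for the local ring at the generic point $\eta$ of $\widetilde{Z}$, with fraction field $\cK$, abbreviate $f = f_{\fp(\tau')}$, and recall that $r(f) = u/\delta^n$ with $v_\delta(u)=0$, so $v_\delta(r(f)) = -n$.

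For~\eqref{lem:val:a} I would work in $\mathcal{O}_\eta$, which is a discrete valuation ring (as $\widetilde{X}$ is normal and $\widetilde{Z}$ has codimension one), regular and essentially smooth over the perfect field $\FF$; hence $\Omega^1_{\widetilde{X}/\FF}$ is free of rank $d$ over $\mathcal{O}_\eta$. Recall $\delta$ may be chosen with $\delta^{p^{f}-1} = H_{\tau'}$, so that $\widetilde\pi$ is \emph{tamely} ramified along $\widetilde{Z}$ of index $p^{f}-1$ (prime to $p$) and ramifies in corank one. Consequently one may pick an $\mathcal{O}_\eta$-basis of $\Omega^1_{\widetilde{X}/\FF}$ of the form $d\delta, \widetilde\pi^*\eta_2, \dots, \widetilde\pi^*\eta_d$, where $\eta_2, \dots, \eta_d$ are differentials of functions regular at $\widetilde\pi(\eta)$. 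As $u \in \mathcal{O}_\eta$, its differential is regular, so $du = a\, d\delta + \sum_{i\geq 2} b_i\, \widetilde\pi^*\eta_i$ with $a, b_i \in \mathcal{O}_\eta$, i.e. $v_\delta(a), v_\delta(b_i) \geq 0$. I then pass to $M := \widetilde\pi^*\Omega^1_{X/\FF}$, in which $du$ is read as a meromorphic section through the generic isomorphism $M \otimes \cK \cong \Omega^1_{\widetilde{X}/\FF} \otimes \cK$: each $\widetilde\pi^*\eta_i$ is a genuine section of $M$, so $v_\delta((\widetilde\pi^*\eta_i)_\tau) \geq 0$, whereas the components $(d\delta)_\tau$ are those supplied by Lemma~\ref{lem:unif}. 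Projecting onto the $\tau$-component and applying the ultrametric inequality gives
\[
v_\delta((du)_\tau) \;\geq\; \min\{\, v_\delta((d\delta)_\tau),\, 0 \,\} \;=\; \inf\{0,\, v_\delta((d\delta)_\tau)\},
\]
which is~\eqref{lem:val:a}.

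I would then deduce the rest from~\eqref{eq:val}. For~\eqref{lem:val:b} ($\tau\neq\tau'$), Lemma~\ref{lem:unif}\,\eqref{lem:unif:b} gives $(d\delta)_\tau = 0$, so the second term of~\eqref{eq:val} vanishes and \eqref{lem:val:a} yields $v_\delta((du)_\tau)\geq 0$; hence $v_\delta(d(r(f))_\tau) \geq -n = v_\delta(r(f))$. For~\eqref{lem:val:c} ($\tau=\tau'$, $p\mid n$), the integer $n$ is zero in $\FF$, so the second term of~\eqref{eq:val} again drops out, and combining \eqref{lem:val:a} with $v_\delta((d\delta)_{\tau'}) = 2 - p^{f}$ (Lemma~\ref{lem:unif}\,\eqref{lem:unif:h}) gives $v_\delta(d(r(f))_{\tau'}) \geq (2-p^{f}) - n = v_\delta(r(f)) - (p^{f}-2)$. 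For~\eqref{lem:val:d} ($p\nmid n$) both terms survive: the first has valuation $\geq (2-p^{f}) - n$ by~\eqref{lem:val:a}, while the second has valuation exactly $v_\delta((d\delta)_{\tau'}) - (n+1) = (1-p^{f}) - n = v_\delta(r(f)) - (p^{f}-1)$, using $v_\delta(n) = v_\delta(u) = 0$. Since this is strictly smaller than the valuation of the first term, the ultrametric inequality is an equality and delivers the stated exact value.

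The main obstacle is~\eqref{lem:val:a}, specifically the bookkeeping of the two lattices $M = \widetilde\pi^*\Omega^1_{X/\FF}$ and $\Omega^1_{\widetilde{X}/\FF}$ inside $M\otimes\cK$: one must be sure that the tame ramification of $\widetilde\pi$ along $\widetilde{Z}$ forces the entire discrepancy — the pole of $(d\delta)_{\tau'}$ of order $p^{f}-2$ — to concentrate in the single component $\tau'$, which is exactly what Lemma~\ref{lem:unif}\,\eqref{lem:unif:b},\eqref{lem:unif:h} records. Once the components of $d\delta$ are pinned down, parts \eqref{lem:val:b}--\eqref{lem:val:d} are immediate, the only subtleties being the characteristic-$p$ vanishing of the scalar $n$ in~\eqref{lem:val:c} and the strict domination that forces equality in~\eqref{lem:val:d}.
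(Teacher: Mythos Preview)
Your proof is correct and follows essentially the same strategy as the paper: establish~\eqref{lem:val:a} by exploiting the tame ramification structure of $\widetilde\pi$ along $\widetilde Z$, then read off \eqref{lem:val:b}--\eqref{lem:val:d} from~\eqref{eq:val} together with Lemma~\ref{lem:unif}\eqref{lem:unif:b},\eqref{lem:unif:h}. The only difference is in the packaging of~\eqref{lem:val:a}: the paper invokes the explicit description $\widetilde B = B^{\et}[\delta]$ (with $B^{\et}/B$ \'etale, from \cite[Cor.~9.6]{AG}) to expand $u = \sum_h u_h \delta^h$ with $u_h \in B^{\et}$ and then observes that each $(du_h)_\tau$ lies in $\widetilde B \otimes_{B^{\et}} \Omega^1_{B^{\et}/\FF} = \widetilde\pi^*\Omega^1_{B/\FF}$, whereas you choose an adapted basis $d\delta, \widetilde\pi^*\eta_2,\dots,\widetilde\pi^*\eta_d$ of $\Omega^1_{\widetilde X/\FF,\eta}$; but the existence of your basis is equivalent to (and most cleanly justified via) the same $\widetilde B = B^{\et}[\delta]$ structure, so the two arguments are really the same computation viewed from slightly different angles.
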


\begin{proof}
Let $\tau \in \Sigmabar$. The proof of \eqref{lem:val:a} is similar to the proof of \cite[Prop.~12.35]{AG} and we reproduce parts of it here.
Let $B$ (resp. $\widetilde{B}$) be the local ring of the generic point of $\widetilde{\pi}(\widetilde{Z})$ (resp. of $\widetilde{Z}$). 
As in   \cite[Cor.~9.6]{AG}, we have $B \subset B^{\et} \subset \widetilde{B}$ where $B^{\et}$ is \'etale over $B$ and $\widetilde{B}=B^{\et}[\delta]$.
Writing $u = \sum_{h=0}^{p^{f_{\fp(\tau')}}-2}u_h\delta^h$ with $u_h \in B^{\et}$, we have
\[(du)_{\tau} = \sum_{h=0}^{p^{f_\fp(\tau')}-2}\big(\delta^h (du_h)_{\tau} + hu_h\delta^{h-1} (d\delta)_{\tau} \big).\]
Now $\delta^h(du_h)_{\tau}$ lies in $\widetilde{B} \otimes_{B^{\et}} \Omega^1_{B^{\et}/\FF}$, which is the same as $\widetilde{\pi}^*\Omega^1_{B/\FF}$ since $B^{\et}$ is \'etale over~$B$.
Hence $\delta^h(du_h)_{\tau}$ has no poles and \eqref{lem:val:a} follows.
Combining this inequality with \eqref{eq:val} and   Lemma~\ref{lem:unif}\eqref{lem:unif:b},~\eqref{lem:unif:h} gives us the other parts of the lemma.
\end{proof}

Finally we are ready to prove that $\Theta_{\tau}(f)$ is also a mod $p$ Hilbert modular form.

\begin{prop}\label{prop:descend}
Let  $\tau \in \Sigmabar$, $f \in M_k^\Katz(\fc,\fn;\FF)$ and  $k'$ be as in Definition~$\ref{defi:theta}$.
Then $\Theta_{\tau}(f)$ descends to a global section of the line bundle $\omega^{\otimes k'}$ over~$X^\ord$,
and further extends to a section over $X$, yielding an element  $\Theta_{\tau}(f) \in M_{k'}^\Katz(\fc,\fn;\FF)$. 
\end{prop}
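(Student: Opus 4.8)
The plan is to argue in two steps matching the two assertions: first that $\Theta_{\tau}(f)$, which is \emph{a priori} only a section of $\pi^*\omega^{\otimes k'}$ over the Kummer cover $X^{\kum}$, descends to a section of $\omega^{\otimes k'}$ over the ordinary locus $X^{\ord}$; and second that this descended section extends holomorphically across the non-ordinary locus to all of~$X$.

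For the descent I would verify invariance under the Galois group $\Gal(X^{\kum}/X^{\ord}) = \prod_{\fp\mid p}(\fo/\fp)^{\times}$, following \cite[Def.~7.19]{AG}. Each nowhere-vanishing section $s_{\tau}$ is scaled by a character of this group, so $r(f) = \pi^*(f)/H_{k}$ is scaled by the inverse of the character acting on $H_{k} = \prod_{\tau}s_{\tau}^{k_\tau}\cdot\pi^*(H_k^\RX)$, while $\pi^*(f)$, being pulled back from $X^{\ord}$, is invariant. Since the universal derivation $d$, the Kodaira--Spencer projection $\KS_\tau$ and $\pi^*(h_{\tau})$ are all defined over $X^{\ord}$ and hence Galois-equivariant, the scalings of $d(r(f))_{\tau}$ and of $H_{k}$ cancel in the product $\KS_\tau(d(r(f))_{\tau})\cdot H_{k}\cdot\pi^*(h_{\tau})$, so $\Theta_{\tau}(f)$ is Galois-invariant and descends to $X^{\ord}$.

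For the extension I would use that $X$ is smooth, hence normal, and that its complement $X\setminus X^{\ord}=\bigcup_{\sigma}Z_{\sigma}$ is a union of prime divisors (combining Lemma~\ref{lem:complement} with Remark~\ref{rem:Hasse}); by normality it then suffices to check holomorphy at the generic point of each $Z_{\sigma}$, and since $\widetilde\pi\colon\widetilde X\to X$ is finite, to check $v_{\delta}(\Theta_{\tau}(f))\geqslant 0$ on every component $\widetilde Z$ of the divisor of $\widetilde\pi^*(h_{\tau'})$, for all $\tau'\in\Sigmabar$. Writing
\[v_{\delta}(\Theta_{\tau}(f)) = v_{\delta}\big(\KS_\tau(d(r(f))_{\tau})\big) + v_{\delta}(H_{k}) + v_{\delta}(\pi^*h_{\tau}),\]
I would evaluate the last two terms exactly via Lemma~\ref{lem:unif}\eqref{lem:unif:c},\eqref{lem:unif:f}, bound the first from below using $\cO$-linearity of $\KS_\tau$ (which can only raise $v_\delta$) together with Lemma~\ref{lem:val}, and substitute $v_{\delta}(r(f))=v_{\delta}(\widetilde\pi^*f)-v_{\delta}(H_{k})$ with $v_{\delta}(\widetilde\pi^*f)\geqslant 0$ since $f$ is holomorphic on~$X$.

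The decisive case is $\tau=\tau'$: here the $(p^{f_{\fp(\tau')}}-1)$-th root structure of the Kummer cover forces $d(r(f))_{\tau'}$ to acquire a pole of order up to $p^{f_{\fp(\tau')}}-1$, by Lemma~\ref{lem:val}\eqref{lem:val:c},\eqref{lem:val:d}, and the factor $\pi^*(h_{\tau'})$ of valuation $p^{f_{\fp(\tau')}}-1$ (Lemma~\ref{lem:unif}\eqref{lem:unif:c}) is present precisely to absorb it. I expect the main obstacle to be the bookkeeping of $v_{\delta}(H_{k})$, which mixes the $s_{\tau}$-contributions from $H_k^\AG$ (Lemma~\ref{lem:unif}\eqref{lem:unif:f}) with the $h_{\tau',i_0}$-contribution from $\pi^*(H_k^\RX)$, the goal being to see that the three terms sum to $v_{\delta}(\widetilde\pi^*f)\geqslant 0$, with strict inequality to spare when $p\mid v_{\delta}(r(f))$. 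Once $v_{\delta}(\Theta_{\tau}(f))\geqslant 0$ is established along every such $\widetilde Z$, the descended section is holomorphic in codimension one on the normal variety $X$ and therefore extends to a global section of $\omega^{\otimes k'}$, that is, an element of $M_{k'}^\Katz(\fc,\fn;\FF)$.
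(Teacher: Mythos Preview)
Your proposal is correct and follows essentially the same route as the paper: descent via Galois-invariance (which the paper outsources to \cite[Thm.~12.39]{AG} applied to $f/H_k^\RX$, while you spell out the character calculation) and extension via the valuation estimates of Lemmas~\ref{lem:unif} and~\ref{lem:val}, using that $v_\delta(\pi^*f)\geqslant 0$ and that the factor $\pi^*(h_\tau)$ absorbs the extra pole in the case $\tau=\tau'$. One small point: your justification that $\KS_\tau$ ``can only raise $v_\delta$'' needs more than $\cO$-linearity---the paper invokes that $\KS_\tau$ is a surjection of locally free sheaves with locally free kernel over the normal scheme~$\widetilde X$, which is what ensures poles do not worsen under the projection.
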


\begin{proof}
The descent  follows by applying \cite[Thm.~12.39]{AG} to $f/H_k^\RX$.

As $\KS_{\tau}$ is a surjective map of locally free sheaves with a locally free kernel over the normal scheme $\widetilde{X}$, the orders of the poles of $\KS_{\tau}(d(r(f))_{\tau})$ are less than or equal to the orders of the poles of $d(r(f))_{\tau}$, {\it i.e.},
$v_\delta(\KS_{\tau}(d(r(f))_{\tau})) \geqslant v_\delta(d(r(f))_{\tau})$ (see the proof of \cite[Prop.~12.37]{AG} for more details). 
Note that $r(f) \cdot H_{k} = \pi^*(f)$ has no poles on~$\widetilde{X}$,
{\it i.e.}, $v_{\delta}(\pi^*f) \geqslant 0$.
Combining  Lemma~\ref{lem:unif}\eqref{lem:unif:c} and Lemma~\ref{lem:val}, we get that $\Theta_{\tau}(f)$ has no poles over $\widetilde{X}$.
Hence, the section obtained by descending from $X^{\kum}$ to $X^{\ord}$ extends to all of~$X$ and is thus a Hilbert modular form.
\end{proof}

The effect of $\Theta_{\tau}$ on the geometric $q$-expansions of Hilbert modular forms will be used in 
\S\ref{subsec:doubling} and can be described as follows.  The identification \eqref{eq:identification}, used in defining the geometric
$q$-expansion $\sum_{\xi \in \fc_+ \cup \{0\}}a_{\xi}(f)q^{\xi}$ of $f$ at the cusp $\infty_{\fc}$, allows one to consider the map
\[\bar\tau_\fc: \FF\otimes\fc\xrightarrow{\sim} \FF\otimes\fo \twoheadrightarrow \FF[x]/(x^{e_{\fp(\tau)}})\twoheadrightarrow \FF,  \]
where the middle map is given by the idempotent at~$\tau$. 
By \cite[Cor.~12.40]{AG} we obtain the following  $q$-expansion at the cusp $\infty_{\fc}$:
\begin{align}\label{eq:q-exp-theta}
\Theta_\tau(f) = \sum_{\xi \in \fc_+} \bar\tau_\fc(1\otimes\xi)a_{\xi}q^{\xi}. 
\end{align}

The proof of our main theorem uses the injectivity of $\Theta_{\tau}$ on certain mod~$p$ Hilbert modular forms.  
\begin{prop}\label{prop:inj}
Let $f \in M_k^\Katz(\fc,\fn;\FF)$ and let $\tau\in \Sigmabar_\fp$. 
Suppose $p \nmid k_{\tau,e_{\fp}}$ and $h_{\tau,e_{\fp}}$ does not divide $f$. Then $\Theta_{\tau}(f) \neq 0$.

In particular, if the weight of $f$ is minimal at $\fp$, and $p \nmid k_{\tau,e_{\fp}}$, then $\Theta_{\tau}(f) \neq 0$.
\end{prop}

\begin{proof} We follow the proof of  \cite[Prop.~15.10]{AG}. Let $\delta$ be the uniformiser at the generic point of an irreducible component of the Weil divisor of $\widetilde{X}$ attached to $\widetilde{\pi}^*h_{\tau,e_{\fp}}$ chosen just before Lemma~\ref{lem:unif}. 
As $v_{\delta}(\widetilde{\pi}^*(f))=0$, using Lemma~\ref{lem:unif}\eqref{lem:unif:c},\eqref{lem:unif:f} (see also \cite[Prop.~15.9]{AG})  we deduce
\[ n:=v_{\delta}(r(f)) = -\sum_{j=0}^{f-1}p^j k_{\phi^j\circ\tau}-(p^{f_{\fp}}-1)\sum_{j=1}^{e_{\fp}-1}k_{\tau,j}   \equiv -k_{\tau,e_{\fp}}\pmod{p}.\]
Hence  $p \nmid n $ and Lemma~\ref{lem:val}~\eqref{lem:val:a} shows that the right most term in~\eqref{eq:val} has a strictly lower valuation than the other term on the right hand side.
Thus, Lemma~\ref{lem:unif}~\eqref{lem:unif:g} shows that
\[d(r(f))_{\tau}=D' - \frac{n    u \delta^{2-p^{f_{\fp(\tau)}}}\cdot \big(\prod_{j=1}^{f_{\fp(\tau)}-1}(h_{\phi^j\circ\tau})^{{p^j}}\big) \cdot \big(\prod_{j \neq e_{\fp}}h_{\tau,j}\big)}{\delta^{n+1}} (dh_{\tau,e_{\fp}})_\tau,\]
where $D'$ is a meromorphic section of $(\widetilde{\pi}^*\Omega^1_{X/\FF})_{\tau}$ and the right most term has a strictly smaller valuation than $D'$.
Combining this with Lemma~\ref{lem:unif}~\eqref{lem:unif:i}, we get that $\KS_{\tau}(d(r(f))_{\tau}) \neq 0$.
This implies that $\Theta_{\tau}(f) \neq 0$.
\end{proof}

\begin{rem}
When $p$ is unramified in $F$, Proposition~\ref{prop:inj} can also be deduced from 
  \cite[Thm.~8.2.2]{DS} whose  proof is different. Furthermore,  in  \cite[Thm.~9.8.2]{DS}, Diamond and Sasaki also determine the kernel of $\Theta_\tau$ in terms of the partial Frobenius operator at $\tau$ that they define. 
Meanwhile, the case when $p$ is ramified in $F$ has been  treated in \cite{D}.
 Proposition~\ref{prop:inj} follows from \cite[Thm. 5.2.1]{D} and the kernel of $\Theta_{\tau}$ is described in terms of partial Frobenius operators in \cite[Cor. 9.1.2]{D}.
\end{rem}

\section{Doubling and Hecke algebras}\label{sec:doubling}

\subsection{Hilbert modular forms of parallel weight $1$}
It is important to distinguish between Katz Hilbert modular forms defined on the fine moduli space and those on the coarse quotient by the action of the totally positive units of~${\fo}$. The latter enjoy the good Hecke theory for $\GL(2)$ and are the natural objects to study
in relation with two dimensional Galois representations (see~\cite{DiWi}). 
In this section, we will define Hilbert modular forms of parallel weight building on Definition~\ref{defi:HMF}.
Even though we give a definition valid in all levels $\fn$ that are prime to~$p$, we nevertheless need to consider the following condition (which is stronger than the one we imposed in \S\ref{section1}) expressing that $\fn$ is sufficiently divisible:
\begin{align}\label{ass:21}
\begin{split}
& \text{$\fn$ is divisible by  a prime above a prime number $q$   splitting completely in $F(\sqrt{\epsilon} \,|\,
\epsilon\in{\fo}_+^\times)$}, \\
& \text{and such that  $q\equiv -1\pmod{4\ell}$ for all prime numbers $\ell$ such that
$[F(\mu_\ell):F]=2$. }
\end{split}
\end{align}
This condition ensures that $\cX^{\DP}$  is a scheme on which  $[\epsilon] \in E= {\fo}^{\times}_+/\{\epsilon \in {\fo}^{\times}|\epsilon-1 \in \fn\}^2$ acts properly and discontinuously  by sending $(A, \lambda, \mu)$ to $(A, \epsilon\lambda, \mu)$ (see    \cite[Lem.~2.1(iii)]{dimitrov:ihara}). For any  $\fc\in \cC$, any $\ZZ_p$-algebra $R$, and any parallel weight $k$,   this  induces an action of $E$ on   $M_k^\Katz(\fc,\fn;R)$, whose invariants are denoted by  $M_k^\Katz(\fc,\fn;R)^E$.   The following definition is equivalent to the one used in  \cite[\S2.2]{DiWi}. 

\begin{defi}\label{defi:HMFWiles}
If $\fn$ satisfies~\eqref{ass:21}, then the space of Hilbert modular forms over a $\ZZ_p$-algebra $R$ of (parallel) weight~$\kappa\in \ZZ$ and level~$\fn$ is given by 
\[M_\kappa(\fn,R)=\bigoplus_{\fc\in \cC} M_{k}^\Katz(\fc,\fn;R)^E,\]
where $k = \sum_{\sigma \in \Sigma}\kappa \sigma$.
For a general level~$\fn$, let $\fq_1 \neq \fq_2$ be primes such that both $\fn \fq_1$ and $\fn \fq_2$ satisfy~\eqref{ass:21} and define
\[M_\kappa(\fn,R)=M_\kappa(\fn \fq_1,R) \cap M_\kappa(\fn \fq_2,R),\]
where the intersection can be taken in $M_\kappa(\fn \fq_1\fq_2,R)$.
Note that the primes $\fq_1$ and $\fq_2$ can be chosen from a set of primes of positive density and that the definition does not depend on this choice.

For $f \in M_{\kappa}(\fn, R)$, we let  $\sum_{\fb \in \cI \cup \{(0)\}}a(\fb,f)q^{\fb}$ be the adelic $q$-expansion of~$f$, where 
$\cI$ denotes the group of fractional ideals of $F$ (see \cite[\S2.6]{DiWi}). 

We denote by $S_\kappa(\fn,R)$ the $R$-submodule of $M_\kappa(\fn,R)$ consisting of  Hilbert modular cuspforms.

\end{defi}

For $f\in M_\kappa(\fn,R)$ and $\fc\in \cC$, we will let  $f_\fc$ denote the corresponding $E$-invariant element of 
$M_k^\Katz(\fc,\fn;\FF)$ or, equivalently, its geometric $q$-expansion at the cusp $\infty_\fc$ (see \cite[\S2.5]{DiWi}). 
Recall that when $\fn$ satisfies~\eqref{ass:21}, $M_\kappa(\fn,R)$ is endowed  with  Hecke and diamond operators (see \cite[\S3.1-3.3]{DiWi}).
When $\fn$ is not sufficiently divisible, Hecke and diamond operators exist nonetheless because they stabilise the intersection $M_\kappa(\fn \fq_1,R) \cap M_\kappa(\fn \fq_2,R)$, where the auxiliary primes $\fq_1,\fq_2$ may be chosen appropriately.
When it is not clear from the context, a superscript between brackets indicates the weight of the space of Hilbert modular forms on which an operator acts, e.g.\ $T_\fp^{(1)}$.
Since we are interested in torsion coefficients,  we let $M_\kappa(\fn ,K/\cO) = \ilim{n} M_\kappa(\fn ,\cO/\varpi^n)$, where the inductive limit is taken by identifying $M_\kappa(\fn ,\cO/\varpi^n)$ with  the subspace $M_\kappa(\fn ,\cO/\varpi^n)\otimes_{\cO} (\varpi\cO)$
of $M_\kappa(\fn ,\cO/\varpi^{n+1})$. 

\subsection{Doubling}\label{subsec:doubling}

We shall rely on the following lifting result.

\begin{lem}\label{lem:lifting} Suppose that $\fn$ satisfies~\eqref{ass:21}.
There exists a $\kappa_0\in \ZZ$ such that for all  $\kappa\geqslant  \kappa_0$ and all $n\in\NN$, the natural map
\[M_\kappa(\fn ,\cO)  \otimes_{\cO}  \cO/\varpi^n  \to  M_\kappa(\fn, \cO/\varpi^n)\]
is a Hecke equivariant isomorphism. 
\end{lem}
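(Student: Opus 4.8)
The plan is to realise each space of parallel weight $\kappa$ Hilbert modular forms as the module of global sections of an ample line bundle on a proper flat $\cO$-scheme, and then to apply the theorem on cohomology and base change in its vanishing form. Since $\fn$ is assumed to satisfy~\eqref{ass:21}, Definition~\ref{defi:HMFWiles} gives $M_\kappa(\fn,R)=\bigoplus_{\fc\in\cC} M_k^\Katz(\fc,\fn;R)^E$ with $k=\sum_{\sigma\in\Sigma}\kappa\sigma$, so it suffices to treat a single $\fc$ and control the $E$-invariants. By~\eqref{ass:21} the group $E$ acts freely (properly discontinuously) on $\cX^{\DP}$ via $(A,\lambda,\mu)\mapsto(A,\epsilon\lambda,\mu)$ by \cite[Lem.~2.1(iii)]{dimitrov:ihara}, so the quotient $V_\fc:=\cX^{\DP}_{\fc,\cO}/E$ is a scheme, flat over $\cO$, and the parallel-weight bundle $\big(\bigwedge^d s_*\Omega^1_{\cA/\cX^{\DP}}\big)^{\otimes\kappa}$ descends along the finite étale quotient map $q$ to a line bundle $\calL^{\otimes\kappa}$ on $V_\fc$; taking $E$-invariant sections amounts to passing to sections over $V_\fc$. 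I would then move to the minimal compactification $\pi:\overline{V}_\fc\to\Spec\cO$, which is proper and flat over $\cO$, on which $\calL$ extends (the Hodge line bundle). Because $d\geqslant 2$, Koecher's principle identifies $M_k^\Katz(\fc,\fn;R)^E$ with $\rH^0(\overline{V}_\fc\times_\cO R,\calL^{\otimes\kappa})$.

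The key geometric input is the ampleness of $\calL$ relative to $\pi$. Since the Hodge line bundle $\omega=q^*\calL$ is ample on the minimal compactification upstairs, and ampleness descends along the finite surjective map $q$, the descended bundle $\calL$ is ample; flatness of $\overline{V}_\fc$ over $\cO$ makes $\calL^{\otimes\kappa}$ flat over $\cO$. Granting relative ampleness, relative Serre vanishing produces a $\kappa_0$ such that $R^i\pi_*\calL^{\otimes\kappa}=0$ for all $i>0$ and all $\kappa\geqslant\kappa_0$. Concretely this reduces to the ordinary Serre vanishing $\rH^i(\overline{V}_{\fc,K},\calL^{\otimes\kappa})=0$ and $\rH^i(\overline{V}_{\fc,\FF},\calL^{\otimes\kappa})=0$ for $i>0$ on the two projective fibres over the generic and closed points of $\Spec\cO$; no smoothness is required, so the singularities of the special fibre when $p$ ramifies cause no difficulty, and one takes $\kappa_0$ to be the larger of the two bounds.

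With all higher direct images vanishing, the theorem on cohomology and base change shows that $\pi_*\calL^{\otimes\kappa}$ is a locally free, hence (as $\cO$ is local) free $\cO$-module whose formation commutes with arbitrary base change $S'\to\Spec\cO$. Applying this to $\Spec(\cO/\varpi^n)\to\Spec\cO$ gives
\[
\pi_*\calL^{\otimes\kappa}\otimes_\cO\cO/\varpi^n\;\xrightarrow{\ \sim\ }\;\rH^0\!\big(\overline{V}_\fc\times_\cO\cO/\varpi^n,\ \calL^{\otimes\kappa}\big)
\]
for every $\kappa\geqslant\kappa_0$ and every $n$; summing over $\fc\in\cC$ yields the asserted isomorphism $M_\kappa(\fn,\cO)\otimes_\cO\cO/\varpi^n\xrightarrow{\sim}M_\kappa(\fn,\cO/\varpi^n)$. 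This map is by construction the one induced by the reduction $\cO\to\cO/\varpi^n$, and since the formation of $\pi_*$ is functorial for the finite correspondences on $\overline{V}_\fc$ defining the Hecke and diamond operators (and these are compatible with flat base change), the isomorphism is automatically Hecke equivariant.

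I expect the real work to lie in the two geometric bookkeeping inputs rather than in the base-change formalism, which is standard. The first is the clean passage to the quotient: verifying freeness of the $E$-action and that the parallel-weight bundle genuinely descends, together with the existence of a minimal compactification of $V_\fc$ that is proper and \emph{flat} over $\cO$ in the possibly ramified, possibly singular Deligne--Pappas setting. The second, and the crux, is the \emph{uniform} ampleness of the descended Hodge bundle $\calL$ over the base $\Spec\cO$, since it is precisely this relative ampleness that produces a single $\kappa_0$ working simultaneously over $K$ and over $\FF$; once these are secured, the vanishing form of cohomology and base change delivers the conclusion.
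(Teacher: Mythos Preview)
Your argument is correct, but the paper takes a shorter route. It cites \cite[Lem.~2.2]{DiWi} for the case $n=1$ (whose proof is precisely the Serre vanishing plus base change argument you outline, so the geometric inputs you flag are dealt with there), and then reduces general $n$ to $n=1$ by an elementary Nakayama step: knowing that an $\cO$-basis $f_1,\dots,f_r$ of $M_\kappa(\fn,\cO)$ reduces to an $\FF$-basis of $M_\kappa(\fn,\FF)$, Nakayama over the local ring $\cO/\varpi^n$ shows the $f_i$ generate $M_\kappa(\fn,\cO/\varpi^n)$, and freeness of $M_\kappa(\fn,\cO)$ (via the $q$-expansion principle) gives injectivity of $M_\kappa(\fn,\cO)\otimes_\cO\cO/\varpi^n\to M_\kappa(\fn,\cO/\varpi^n)$. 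Your direct base-change approach handles all $n$ at once and is more self-contained, at the cost of rehearsing the compactification and ampleness bookkeeping; the paper trades that for a citation plus two lines of algebra over a DVR.
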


\begin{proof}
The proof of \cite[Lem.~2.2]{DiWi} works unchanged after replacing $\ZZ_p$ by~$\cO$ and $p$ by $\varpi^n$.
\end{proof}

We also need a generalisation of the total Hasse invariant modulo~$\varpi^n$.

\begin{lem}\label{lem:Hasse}
For every $n \in \NN$, there is a $\kappa_n \in \NN$ such that $(\kappa_n -1)$ is a multiple of $(p-1)p^{n-1}$, 
and a modular form $h_n \in M_{\kappa_n-1}(\fo, \cO/\varpi^n)$ having $q$-expansion equal to~$1$ at $\infty_\fc$ for all $\fc\in \cC$.  In particular, it does not vanish at any cusp.
\end{lem}

\begin{proof}
Let $h \in M_{p-1}(\fo,\FF)$ be the usual Hasse invariant (see \cite[\S3.4]{DiWi}). Note that since it exists for every level satisfying~\eqref{ass:21}, it exists in level~$\fo$. For $r$ such that $r(p-1)$ is big enough to apply Lemma~\ref{lem:lifting}, the 
 modular form  $h^r\in M_{r(p-1)}(\fo,\cO)$ has $q$-expansion congruent to~$1$  modulo~$\varpi$ at each cusp $\infty_\fc$, $\fc\in \cC$.
 A big enough power of it satisfies the required congruence relation and condition on the weight.  
\end{proof}

The theory of generalised $\Theta$-operators presented in \S\ref{sec:theta} allows us to prove the following result. 

\begin{lem}\label{lem:onep}
Assume that $\fn$ satisfies~\eqref{ass:21}.
Then there does not exist any $0 \neq f \in M_1(\fn ,\FF)$ such that $f_\fc$ has minimal weight at a fixed prime $\fp$ dividing $p$ (see Corollary~\ref{cor:minfil1}) for all $\fc\in \cC$ and such that $a(\fb,f)=0$ for all ideals $\fb\subset {\fo}$ not divisible by~$\fp$.
\end{lem}

\begin{proof} 
The minimality of the weight at~$\fp$ implies that there exists a $\tau \in \Sigmabar_\fp$ such that $h_{\widetilde\tau}$ does not divide $f_\fc$ (the proof of Corollary~\ref{cor:minfil1} implies that this is true for all $\tau \in \Sigmabar_\fp$). 
Let $\fb = (\xi) \fc^{-1}$. Then, by definition, $a_\xi = a(\fb,f)$ 
and this is zero unless $\fp \mid \fb$, in which case $\fp \mid (\xi)$.
Thus, it follows that $\bar\tau_\fc(1\otimes\xi) = 0$.
By \eqref{eq:q-exp-theta}, this shows that the geometric $q$-expansion of~$\Theta_\tau(f_\fc)$ vanishes at  $\infty_{\fc}$ for all $\fc \in \cC$, {\it i.e.}, $\Theta_\tau(f_\fc)=0$, contradicting the injectivity criterion from Proposition~\ref{prop:inj}. 
\end{proof}

For $\fp \mid p$ and $n \in \NN$, we define the $V_\fp $-operator by (see \cite{DiKS}, improving on and correcting previous works such as \cite{ERX} and \cite{DiWi}, for the  definition of $T_\fp^{(1)}$)
\[  V_{\fp,n} = \diam{\fp}^{-1}(h_n T_\fp ^{(1)}  -  T_\fp ^{(\kappa_n)} h_n )  \] 
with $h_n$ and $\kappa_n$ from Lemma~\ref{lem:Hasse}.
A simple computation on $q$-expansions (see \cite[Prop.~3.6]{DiWi}) shows that $V_{\fp,n}$ has the following effect on adelic $q$-expansions:
\begin{align*}
a((0),V_{\fp,n}f)  &= a((0),f)[\fp^{-1}],\\
a(\fr, V_{\fp,n}f) &= a( \fp^{-1}\fr,f)
\end{align*}
for non-zero ideals $\fr \subseteq {\fo}$.

\begin{prop}\label{prop:doubling}
Let $\fp\mid p$ be a prime and assume that $\fn$ satisfies~\eqref{ass:21}.
\begin{enumerate}[(i)]
\item\label{item:doubling:a} If $f \in S_1(\fn ,K/\cO)$ and  $a(\fb,f)=0$ for all ideals $\fb\subset {\fo}$ not divisible by~$\fp$, then $f=0$. 
\item\label{item:doubling:b} For all $n \in \NN$, the `doubling map'
\[ (h_n,V_{\fp,n}): S_1(\fn ,\cO/\varpi^n)^{\oplus 2} \xrightarrow{(f,g) \mapsto h_n f + V_{\fp,n}g} M_{\kappa_n}(\fn ,\cO/\varpi^n)\]
is injective and compatible with the Hecke operators $T_\fq$ for $\fq \nmid \fn p$.
The Hecke operator $T_\fp ^{(\kappa_n)}$ acts on the image by the formula $T_\fp ^{(\kappa_n)}  \circ (h_n,V_{\fp,n})  = (h_n,V_{\fp,n}) \circ \left(\begin{smallmatrix} T_\fp^{(1)} & 1  \\ -\diam{\fp} & 0 \end{smallmatrix}\right)$.
In particular, the image $W_{\fp,n}$ of $(h_n,V_{\fp,n})$ lies in the $\fp$-ordinary part of $M_{\kappa_n}(\fn ,\cO/\varpi^n)$ and is stable under all Hecke operators $T_\fq$ for $\fq \nmid \fn p$.
\end{enumerate}
If $(p-1)$ does not divide $e_\fp$, then the same statements hold after replacing the spaces $S_1(\fn ,K/\cO)$ and $S_1(\fn ,\cO/\varpi^n)$ by $M_1(\fn ,K/\cO)$ and $M_1(\fn ,\cO/\varpi^n)$, respectively.
\end{prop}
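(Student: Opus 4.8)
The plan is to deduce the whole proposition from part~(i), and to reduce part~(i) in turn to the mod~$\varpi$ statement, where the injectivity of the generalised $\Theta$-operator established in \S\ref{sec:theta} enters through Lemma~\ref{lem:onep}; the passage to $M_1$ under $(p-1)\nmid e_\fp$ affects only that base case.

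First I would prove part~(i). An element of $S_1(\fn,K/\cO)$ is annihilated by some $\varpi^n$, hence represented by a form $f\in S_1(\fn,\cO/\varpi^n)$, and the vanishing hypothesis on its coefficients is preserved by this identification; I argue by induction on~$n$. For $n=1$ we have $f\in S_1(\fn,\FF)$, and by Corollary~\ref{cor:cusp-minfil1} each component $f_\fc$ has minimal weight, so in particular is minimal at~$\fp$; thus Lemma~\ref{lem:onep} applies verbatim and gives $f=0$. For the inductive step, the reduction of $f$ modulo $\varpi^{n-1}$ lies in $S_1(\fn,\cO/\varpi^{n-1})$ and still satisfies the hypothesis, hence vanishes by induction. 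Since $\cX$ is smooth, hence flat, over~$\cO$, tensoring the short exact sequence $0\to\cO/\varpi\xrightarrow{\varpi^{n-1}}\cO/\varpi^n\to\cO/\varpi^{n-1}\to0$ with the locally free sheaf $\omega^{\otimes k}$ remains exact, and applying $\rH^0$ and then $E$-invariants (both left exact) identifies the kernel of $S_1(\fn,\cO/\varpi^n)\to S_1(\fn,\cO/\varpi^{n-1})$ with $\varpi^{n-1}S_1(\fn,\FF)$. Writing $f=\varpi^{n-1}g$ with $g\in S_1(\fn,\FF)$, injectivity of $\varpi^{n-1}\colon\FF\hookrightarrow\cO/\varpi^n$ transfers the vanishing to the coefficients of~$g$, and the base case forces $g=0$, whence $f=0$.

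Next I would establish part~(ii). Since $h_n$ has $q$-expansion~$1$ (Lemma~\ref{lem:Hasse}) and $a(\fr,V_{\fp,n}g)=a(\fp^{-1}\fr,g)$, the image $F=h_nf+V_{\fp,n}g$ satisfies $a(\fb,F)=a(\fb,f)$ whenever $\fp\nmid\fb$; if $F=0$ this puts $f$ in the situation of part~(i), so $f=0$, and then $V_{\fp,n}g=0$ forces $g=0$ by the $q$-expansion Principle, proving injectivity. For the Hecke action the two governing congruences are, modulo~$\varpi^n$,
\[
\Norm\fq^{\kappa_n-1}\equiv1 \quad(\fq\nmid \fn p),\qquad \Norm\fp^{\kappa_n-1}\equiv0,
\]
both consequences of the divisibility $(p-1)p^{n-1}\mid\kappa_n-1$ from Lemma~\ref{lem:Hasse}: the first because $\Norm\fq^{(p-1)p^{n-1}}\equiv1\pmod{p^n}$ and $p^n\cO\subseteq\varpi^n\cO$, the second because $p\mid\Norm\fp$ and $\kappa_n-1\geqslant n$. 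The first makes the weight-$\kappa_n$ and weight-$1$ $q$-expansion formulas for $T_\fq$ agree modulo~$\varpi^n$, so $T_\fq^{(\kappa_n)}$ commutes with multiplication by~$h_n$ and with $V_{\fp,n}$, yielding $T_\fq$-compatibility. The second collapses $T_\fp^{(\kappa_n)}$ to the operator $a(\fr,\cdot)\mapsto a(\fp\fr,\cdot)$ modulo~$\varpi^n$; combining this with $a(\fr,T_\fp^{(1)}f)=a(\fp\fr,f)+\diam{\fp}\,a(\fp^{-1}\fr,f)$ and the defining relation of $V_{\fp,n}$, a short $q$-expansion computation shows that $T_\fp^{(\kappa_n)}$ sends the image of $(f,g)$ to that of $(T_\fp^{(1)}f+g,\,-\diam{\fp}f)$, i.e. acts by $\left(\begin{smallmatrix}T_\fp^{(1)}&1\\-\diam{\fp}&0\end{smallmatrix}\right)$. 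This matrix has determinant $\diam{\fp}$, a unit, so $T_\fp^{(\kappa_n)}$ is invertible on $W_{\fp,n}$, placing $W_{\fp,n}$ in the $\fp$-ordinary part; its stability under $T_\fq$ for $\fq\nmid\fn p$ is the compatibility just shown.

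Finally, for the statement with $M_1$ replacing $S_1$ when $(p-1)\nmid e_\fp$, the only step that used cuspidality is the mod~$\varpi$ base case of part~(i), where Corollary~\ref{cor:cusp-minfil1} guaranteed minimality at~$\fp$. Under $(p-1)\nmid e_\fp$, Corollary~\ref{cor:minfil1} excludes the possibility that a nonzero $f_\fc\in M_1(\fn,\FF)$ has partial weight~$0$ at~$\fp$, so each such $f_\fc$ is again minimal at~$\fp$ and Lemma~\ref{lem:onep} applies to all of $M_1(\fn,\FF)$; the dévissage and the arguments of part~(ii) are then formally identical. I expect the genuine difficulty to be concentrated entirely in this mod~$\varpi$ base case, i.e. in the injectivity of $\Theta_\tau$ (Proposition~\ref{prop:inj}) as packaged in Lemma~\ref{lem:onep}; granting that, the dévissage is routine and the only delicate bookkeeping in part~(ii) is verifying that the divisibility of $\kappa_n-1$ produces the two Hecke congruences modulo~$\varpi^n$.
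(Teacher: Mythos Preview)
Your proof is correct and follows essentially the same approach as the paper's: the d\'evissage for part~(i) via the exact sequence $0\to S_1(\fn,\cO/\varpi)\to S_1(\fn,\cO/\varpi^n)\to S_1(\fn,\cO/\varpi^{n-1})$ with base case Lemma~\ref{lem:onep} (using Corollaries~\ref{cor:minfil1} and~\ref{cor:cusp-minfil1}), and for part~(ii) the injectivity from~(i) applied to the first component together with a $q$-expansion computation for the $T_\fp^{(\kappa_n)}$-action. You have helpfully spelled out the Hecke congruences $\Norm\fq^{\kappa_n-1}\equiv1$ and $\Norm\fp^{\kappa_n-1}\equiv0$ modulo~$\varpi^n$ that the paper defers to~\cite[Lem.~3.7]{DiWi}.
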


\begin{proof}
\eqref{item:doubling:a}
For $f \in S_1(\fn ,\cO/\varpi)$, the claim is precisely the content of Lemma~\ref{lem:onep}, in view of Corollaries \ref{cor:minfil1} and~\ref{cor:cusp-minfil1}. 
The induction step from $n-1$ to~$n$ follows from the $q$-expansion principle and the exact sequence
\[ 0 \to S_1(\fn ,\cO/\varpi) \otimes_{\cO} \varpi^{n-1}\cO \to  S_1(\fn ,\cO/\varpi^n) \to S_1(\fn ,\cO/\varpi^{n-1}).\]
\eqref{item:doubling:b} The injectivity follows from \eqref{item:doubling:a} applied to the first component of an element in the kernel. 
The matrix is obtained from a calculation as in~\cite[Lem.~3.7]{DiWi}.
\end{proof}

\subsection{Hecke algebras}

For $\kappa\geqslant  1$ and $n\in \NN$, we consider the following  complete Artinian (resp.\ Noetherian) semi-local $\cO$-algebras
\begin{align}
\begin{split} 
\TT^{(\kappa)}_n & = \Image\big(\cO[T_\fq, \diam{\fq}]_{\fq \nmid \fn p} \to \End_\cO(M_\kappa(\fn,\cO/\varpi^n))\big),  \\
\TT^{(\kappa)}_{\cusp,n} & = \Image\big(\cO[T_\fq, \diam{\fq}]_{\fq \nmid \fn p} \to \End_\cO(S_\kappa(\fn,\cO/\varpi^{n}))\big), \text{ resp.,} \\
\TT^{(\kappa)} & =\Image\big(\cO[T_\fq, \diam{\fq}]_{\fq \nmid \fn p} \to \End_\cO(M_\kappa(\fn ,K/\cO))\big)=\varprojlim\limits_n \TT^{(\kappa)}_n,\\
\TT^{(\kappa)}_\cusp & =\Image\big(\cO[T_\fq, \diam{\fq}]_{\fq \nmid \fn p} \to \End_\cO(S_\kappa(\fn ,K/\cO))\big)=\varprojlim\limits_n \TT^{(\kappa)}_{\cusp,n}.
\end{split}
\end{align}
Note that they all contain $\diam{\fp}$ for $\fp \mid p$ since  $p$  is relatively prime to~$\fn$.
Moreover, the restriction to the cusp space gives surjective morphisms $\TT^{(\kappa)}_n \twoheadrightarrow \TT^{(\kappa)}_{\cusp,n}$ and $\TT^{(\kappa)}\twoheadrightarrow \TT^{(\kappa)}_\cusp$. We  also consider the torsion free Hecke $\cO$-algebra: 
\[\TT^{(\kappa)}_\cO= \Image\big(\cO[T_\fq, \diam{\fq}]_{\fq \nmid \fn p} \to \End_\cO(M_\kappa(\fn ,\cO))\big). \]

 Let $I_n$ be the annihilator of $\TT^{(\kappa)}_\cO$ acting on $M_\kappa(\fn ,\cO) \otimes_\cO (\cO/\varpi^n)$. Then we have natural surjective ring homomorphisms
\[ \TT^{(\kappa)}_n \twoheadrightarrow  \TT^{(\kappa)}_\cO / I_n  \textnormal{ and } \TT^{(\kappa)}\twoheadrightarrow \TT^{(\kappa)}_\cO, \]
the latter coming from the fact that the intersection $\bigcap_{n} I_n$ is zero.
For sufficiently large~$\kappa$, both homomorphisms are isomorphisms due to Lemma~\ref{lem:lifting}.
However, this need no longer be true in our principal case of interest $\kappa=1$ since the inclusions 
\[  M_1(\fn ,\cO) \otimes_\cO (\cO/\varpi^n) \hookrightarrow M_1(\fn , \cO/\varpi^n) \textnormal{ and  }  
  M_1(\fn ,\cO) \otimes_\cO (K/\cO) \hookrightarrow M_1(\fn , K/\cO)  \]
need not be isomorphisms, in general. The kernel of $\TT^{(1)}\twoheadrightarrow \TT^{(1)}_\cO$ is a  finitely  generated  torsion $\cO$-module, which is isomorphic to the kernel of $\TT^{(1)}_n \to \TT^{(1)}_{\cO}/I_n$ for $n\in \NN$ sufficiently large. 
Recall that multiplication by the Hasse invariant $h_n$ allows us to see $M_1(\fn ,\cO/\varpi^n)$ inside $M_{\kappa_n}(\fn ,\cO/\varpi^n)$ equivariantly for all Hecke operators $T_\fq$ and $\langle\fq\rangle$ for $\fq \nmid \fn p$, yielding a surjection $\TT^{(\kappa_n)}_n \twoheadrightarrow \TT^{(1)}_n$  (see Proposition \ref{prop:doubling}).
For a prime $\fp \mid p$, consider also the  Hecke algebra: 
\begin{align}\label{eq:TT-tilde}
\widetilde{\TT}^{(\kappa_n)}_n=\TT^{(\kappa_n)}_n[T_\fp^{(\kappa_n)}] \subset  \End_\cO(M_{\kappa_n}(\fn ,\cO/\varpi^n))\big).
\end{align}

\begin{cor}\label{cor:doubling}
Let $\fp\mid p$. Then for any $n \in \NN$,
there is a surjection sending $T^{(\kappa_n)}_{\fp}$ to $U$ (considered as a polynomial variable): 
\[\widetilde{\TT}^{(\kappa_n)}_{n} \twoheadrightarrow \TT^{(1)}_{\cusp,n}[T_\fp^{(1)},U]/\big(U^2 - T_\fp^{(1)}U + \diam{\fp}\big).\]
 The same statement holds after replacing $\TT^{(1)}_{\cusp,n}$ by $\TT^{(1)}_n$, provided $(p-1)\nmid e_\fp$.
\end{cor}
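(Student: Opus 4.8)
The plan is to realise the desired surjection as the restriction of the Hecke action to the image $W_{\fp,n}$ of the doubling map, and then to identify that image with the displayed quotient ring. By Proposition~\ref{prop:doubling} the submodule $W_{\fp,n}\subseteq M_{\kappa_n}(\fn,\cO/\varpi^n)$ is stable under all $T_\fq,\diam{\fq}$ with $\fq\nmid\fn p$ and under $T_\fp^{(\kappa_n)}$, so restriction gives an $\cO$-algebra homomorphism $\widetilde{\TT}^{(\kappa_n)}_n\to\End_\cO(W_{\fp,n})$. First I would transport this through the doubling isomorphism $W_{\fp,n}\cong S_1(\fn,\cO/\varpi^n)^{\oplus 2}$: under it the operators $T_\fq,\diam{\fq}$ (for $\fq\nmid\fn p$) act diagonally through their images in $\TT^{(1)}_{\cusp,n}$, while $T_\fp^{(\kappa_n)}$ acts by the matrix $N=\left(\begin{smallmatrix} T_\fp^{(1)} & 1\\ -\diam{\fp} & 0\end{smallmatrix}\right)$, again by Proposition~\ref{prop:doubling}. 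Since $\widetilde{\TT}^{(\kappa_n)}_n$ is generated by these operators, the image of the restriction map is exactly the $\cO$-subalgebra $\TT^{(1)}_{\cusp,n}[N]$ of $\mathrm{M}_2\big(\End_\cO S_1(\fn,\cO/\varpi^n)\big)$ generated by the scalar copy of $\TT^{(1)}_{\cusp,n}$ and by $N$, and the restriction map sends $T_\fp^{(\kappa_n)}\mapsto N$.

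It then remains to produce a ring isomorphism $\TT^{(1)}_{\cusp,n}[N]\cong R$, where $R:=\TT^{(1)}_{\cusp,n}[T_\fp^{(1)},U]/(U^2-T_\fp^{(1)}U+\diam{\fp})$ and $T_\fp^{(1)}$ denotes the actual weight~$1$ Hecke operator. The Cayley--Hamilton identity $N^2-T_\fp^{(1)}N+\diam{\fp}\,I=0$ shows that $U\mapsto N$, $c\mapsto cI$ for $c\in\TT^{(1)}_{\cusp,n}$, and $T_\fp^{(1)}\mapsto T_\fp^{(1)}I$ extend to a well-defined $\cO$-algebra homomorphism $\varphi\colon R\to\mathrm{M}_2(\End_\cO S_1)$. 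Writing $B:=\TT^{(1)}_{\cusp,n}[T_\fp^{(1)}]$, the ring $R$ is free of rank $2$ over $B$ on $\{1,U\}$; since the upper-right entry of $N$ is $1$, the scalars $I$ and $N$ are $B$-linearly independent, so $\varphi$ is injective with image $B\cdot I+B\cdot N$.

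The main point, and the only step that is not purely formal, is to check that this image equals $\TT^{(1)}_{\cusp,n}[N]$, i.e.\ that $T_\fp^{(1)}I$ already lies in $\TT^{(1)}_{\cusp,n}[N]$; a priori the latter is generated only by the scalars $\TT^{(1)}_{\cusp,n}$ and by $N$, and it is not evident that $T_\fp^{(1)}$ can be recovered, since $T_\fp^{(1)}M=M^2+\diam{\fp}I$ only recovers the product $T_\fp^{(1)}N$, not $T_\fp^{(1)}I$. Here I would use that $\diam{\fp}$ is invertible (the diamond operators form a group, as is already implicit in the definition of $V_{\fp,n}$), so that $\det N=\diam{\fp}$ is a unit and $N$ is invertible in $\End_\cO(W_{\fp,n})$. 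Because $\cO[N]$ is commutative and module-finite over the Artinian local ring $\cO/\varpi^n$, it is itself Artinian; as $N$ is invertible in the ambient endomorphism ring it is a non-zero-divisor in $\cO[N]$, hence a unit there, so $N^{-1}\in\cO[N]\subseteq\TT^{(1)}_{\cusp,n}[N]$. Multiplying the Cayley--Hamilton relation by $N^{-1}$ gives $T_\fp^{(1)}I=N+\diam{\fp}\,N^{-1}$, which lies in $\TT^{(1)}_{\cusp,n}[N]$ because $\diam{\fp}\in\TT^{(1)}_{\cusp,n}$. Thus $\TT^{(1)}_{\cusp,n}[N]=B\cdot I+B\cdot N=\Image\varphi$, so $\varphi$ is an isomorphism onto the image of the restriction map, and composing the restriction map with $\varphi^{-1}$ yields the asserted surjection $\widetilde{\TT}^{(\kappa_n)}_n\twoheadrightarrow R$ sending $T_\fp^{(\kappa_n)}$ to $U$.

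Finally, for the non-cuspidal variant I would run the identical argument with $S_1$ replaced by $M_1$ and $\TT^{(1)}_{\cusp,n}$ by $\TT^{(1)}_n$: when $(p-1)\nmid e_\fp$, Proposition~\ref{prop:doubling} supplies the doubling isomorphism $W_{\fp,n}\cong M_1(\fn,\cO/\varpi^n)^{\oplus 2}$ together with the same matrix description of $T_\fp^{(\kappa_n)}$, and every step above goes through verbatim.
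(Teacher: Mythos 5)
Your proof is correct and follows essentially the same route as the paper's: restrict the Hecke action to the doubled image $W_{\fp,n}\cong S_1(\fn,\cO/\varpi^n)^{\oplus 2}$, use the matrix $N$, Cayley--Hamilton and the $B$-linear independence of $I$ and $N$ to identify the image of $\widetilde{\TT}^{(\kappa_n)}_n$ with the displayed quotient ring, and recover the diagonal $T_\fp^{(1)}$ from the identity $T_\fp^{(1)}I = N + \diam{\fp}N^{-1}$. The only cosmetic difference is that the paper justifies inverting $T_\fp^{(\kappa_n)}$ by noting the image lies in the $\fp$-ordinary part of $M_{\kappa_n}(\fn,\cO/\varpi^n)$, whereas you argue directly that $N$ is a unit in the Artinian ring $\cO[N]$; these amount to the same observation.
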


\begin{proof}
The injection from Proposition~\ref{prop:doubling} gives a  morphism $ \widetilde{\TT}^{(\kappa_n)}_{n}\to \End_\cO\big(S_{1}(\fn ,\cO/\varpi^n)^{\oplus 2}\big)$
of $\cO$-algebras compatible with $T_\fq$ and $\langle\fq\rangle$  for all $\fq \nmid \fn p$ and, hence, we get a surjection $\TT^{(\kappa_n)}_{n} \twoheadrightarrow \TT^{(1)}_{\cusp,n}$.
Moreover,  $T_\fp ^{(\kappa_n)}$ acts on the image of $S_1(\fn ,\cO/\varpi^n)^{\oplus 2}$ via the matrix $\left(\begin{smallmatrix} T_\fp^{(1)} & 1  \\ -\diam{\fp} & 0 \end{smallmatrix}\right)$, whence
it is annihilated by its  characteristic polynomial   $U^2 - T_\fp^{(1)}U + \diam{\fp}$ and does not satisfy any non-trivial linear relation over  $\TT^{(1)}_{n}[T_\fp^{(1)}]$, thus proving the existence of the desired homomorphism. 
For the surjectivity, let us observe that the image of $S_1(\fn ,\cO/\varpi^n)^{\oplus 2}$ is contained in the 
$T_\fp^{(\kappa_n)}$-ordinary subspace of $M_{\kappa_n}(\fn ,\cO/\varpi^n)$, and that the  endomorphism
$T_\fp^{(\kappa_n)} + \diam{\fp}(T_\fp^{(\kappa_n)})^{-1}$ of the latter space acts on the former as $\left(\begin{smallmatrix} T_\fp^{(1)} & 0  \\ 0& T_\fp^{(1)} \end{smallmatrix}\right)$.
Finally, assuming $(p-1)\nmid e_\fp$ allows us to apply Proposition~\ref{prop:doubling} with $M_1(\fn ,\cO/\varpi^n)$ instead of $S_1(\fn ,\cO/\varpi^n)$, leading to the validity of the result with $\TT^{(1)}_{\cusp,n}$ replaced by $\TT^{(1)}_n$.
\end{proof}

\section{Pseudo-representations for weight $1$ Hecke algebras}\label{s:pseudo-CS}

\subsection{Pseudo-representations of degree $2$}

In this section, we recall some definitions due to Che\-ne\-vier~\cite{C} and  Calegari--Specter~\cite{CaSp}. 

\begin{defi}
\label{pseudodef}
Let $R$ be a complete Noetherian local $\cO$-algebra with maximal ideal~$\fm$ and residue field $R/\fm = \FF$ considered
with its natural $\fm$-adic topology.
An {\em $R$-valued pseudo-representation of degree~$2$} of $\rG_F$ is a tuple $P=(T,D)$ consisting of continuous maps
$T,D: \rG_F \to R$ such that
\begin{enumerate}[(i)]
\item\label{prop1} $D$ is a  group homomorphism $\rG_F \to R^\times$,
\item\label{prop2}  $T(1)=2$ and $T(gh) = T(hg) = T(g)T(h) - D(g) T(g^{-1}h)$ for all $g,h \in \rG_F$.
\end{enumerate}

We extend $T : \rG_F \to R$ to an $R$-linear map $R[\rG_F] \to R$ and we denote this map by $T$ as well. 

Given $g \in \rG_F$, we define $D(g-1) := D(g)-T(g)+1$.

The pseudo-representation $P=(T,D)$ is said to be {\em unramified at~$\fp$} if
$D(h-1) = T(g(h-1))=0$ for all $g \in \rG_F$ and all $h \in \rI_\fp$.
\end{defi}

 Any continuous representation $\rho:\rG_F \to \GL_2(R)$ yields a degree~$2$
pseudo-represen\-tation $P_\rho = (\tr \circ \rho, \det \circ \rho)$. The converse is true when the semi-simple  representation $\bar\rho:\rG_F \to \GL_2(\FF)$ corresponding to the residual pseudo-representation is absolutely irreducible (see  \cite[Thm. 2.22]{C}).  
Further,  if $\rho$ is unramified outside a finite set of places $S$, then so is $P_\rho$. Again, the converse is true  in the residually absolutely irreducible case. This can be seen by applying  {\it loc.~cit.}  to the Galois group of the maximal extension of $F$ unramified outside $S$ over $F$. 

We  introduce  a  notion of ordinarity inspired from  Calegari--Specter~\cite{CaSp}.

\begin{defi}\label{defi:ordinary}
Let $\widetilde{P}=(P,\alpha_\fp)$ with $P=(T,D)$ a degree~$2$ pseudo-repre\-sen\-tation of $\rG_F$ over $R$ and $\alpha_\fp\in R$
a root of $X^2 - T(\Frob_\fp )X+D(\Frob_\fp ) \in R[X]$.

We say that $\widetilde{P}$ is {\em ordinary at~$\fp$} of  weight $\kappa\geqslant 1$, if for all $h, h' \in \rI_\fp$ and all $g \in \rG_F$ we have
\begin{enumerate}[(i)]
\item $D(h-1)=0$ and $T(h-1)= \chi_p^{\kappa-1}(h) -1$, where $\chi_p$ denotes the $p$-adic cyclotomic character,   
\item $T\big(g(h-\chi_p^{\kappa-1}(h)) (h'\Frob_\fp  - \alpha_\fp)\big) = 0$. 
\end{enumerate}
\end{defi}

\begin{rem}
Note that our notion of $\fp$-ordinary pseudo-representations implies the one of Calegari--Specter (\cite[Def. 2.5]{CaSp}).
Let $P = (T, D) : \rG_F \to R^2$ be a degree $2$ pseudo-representation and let  $(\bar T, \bar D) : \rG_F \to \FF^2$ be its residual pseudo-representation. 
Suppose there exists a lift $\Frob_{\fp} \in \rG_{F_{\fp}}$ of the arithmetic Frobenius at $\fp$ such that the polynomial $X^2-\bar T(\Frob_{\fp}) X + \bar D(\Frob_{\fp})$ has \emph{distinct} roots in $\FF$.
Then $(P,\alpha_{\fp})$ is a $\fp$-ordinary pseudo-representation in the sense of Definition~\ref{defi:ordinary} if and only if it is $\fp$-ordinary in the sense of Calegari--Specter.
However, if this hypothesis does not hold, then we expect that the two notions are not equivalent.
\end{rem}

Let $\overline{P}=(\overline{T},\overline{D}): \rG_F \to\FF^2$ be a fixed degree $2$ pseudo-representation unramified outside $\fn p\infty$.
Denote by $P^\ps=(T^\ps,D^\ps): \rG_F \to (R^\ps)^2$ the universal deformation of~$\overline{P}$ unramified outside $\fn p\infty$ in the category of complete Noetherian local $\cO$-algebras with residue field $\FF$ and consider the quotient 
\begin{align}\label{eq:pseudo-ord-univ}
R^\ps[X]/(X^2 - T^\ps(\Frob_\fp )X+D^\ps(\Frob_\fp ))\twoheadrightarrow  R^\ord
\end{align}
which classifies pairs $(P,\alpha_\fp)$ such that $P$ is a deformation of~$\overline{P}$ unramified outside $\fn p \infty$ and $(P,\alpha_\fp)$ is ordinary at~$\fp$ of weight $\kappa$. 
The universal ring $R^\ord$, classifying deformations of~$\overline{P}$ which are unramified outside $\fn p \infty$ and are ordinary at~$\fp$ of weight~$\kappa$, is the quotient of the ring $R^\ps[X]/(X^2 - T^\ps(\Frob_\fp )X+D^\ps(\Frob_\fp ))$ by the ideal generated by the set
\[ \left\{D^\ps(h-1), T^\ps(h-1)- \chi_p^{\kappa-1}(h) +1, T^\ps\big(g(h-\chi_p^{\kappa-1}(h)) (h' \Frob_\fp  - X)\big) \;|\; h, h' \in \rI_\fp, g \in \rG_F\right\}\]
and a  direct computation shows that $R^\ord$ is independent of the choice of $\Frob_\fp$.

Note that $R^\ord$ is a finite $R^\ps$-algebra. 
As $R^\ps$ is a local ring, it follows that $R^\ord$ is a semi-local ring and all of its maximal ideals contain the unique maximal ideal $\fm^{\ps}$ of $R^{\ps}$.
After going modulo $\fm^{\ps}$ in $R^\ord$, it is easy to see, using the description of the ideal from the previous paragraph, that the number of maximal ideals of $R^\ord$ is the number of distinct $\alpha \in \FF$ such that $(\overline{P},\alpha)$ is a $\fp$-ordinary pseudo-representation of weight $\kappa$.

Now suppose $\overline{P}$ is unramified at $\fp$ and $\kappa \equiv 1 \pmod{p-1}$. Then  we have
\[\overline{T}\big(g(h-\chi_p^{\kappa-1}(h)) (h' \Frob_\fp  - X)\big) = \overline{T}\big(g(h-1) h' \Frob_\fp\big) - X\overline{T}(g(h-1)) = \overline{T}(h'\Frob_\fp g(h-1)) = 0.\]
Here we are repeatedly using the fact that $\overline{T}(g(h-1))=0$ for all $g \in G_F$ and $h \in \rI_\fp$, which is a consequence of the assumption that $\overline{P}$ is unramified at $\fp$.
Thus, in this case, we see that $(\overline{P},\alpha)$ is a $\fp$-ordinary pseudo-representation of weight $\kappa$ if and only if $\alpha$ is a root of the polynomial $X^2 - \overline{T}(\Frob_\fp )X+ \overline{D}(\Frob_\fp )$.
Hence,  in this case, $R^\ord$  is a semi-local Noetherian ring with two maximal ideals if the polynomial $X^2 - \overline{T}(\Frob_\fp )X+ \overline{D}(\Frob_\fp )$ has two distinct roots and it is a local Noetherian ring otherwise.

\subsection{Existence of an ordinary Hecke algebra-valued pseudo-representation}\label{sec:ord}
We continue to use the notation from \S\ref{sec:doubling}.
Let $\fm$ be any maximal ideal of $\TT^{(1)}$ (or equivalently of $\TT^{(1)}_n$ for some $n$) and 
denote also by $\fm$ the maximal ideals of $\TT^{(\kappa_n)}$ and $\TT^{(\kappa_n)}_n$ 
defined as the pull-back of $\fm \subset \TT^{(1)}_n$.

\begin{lem}\label{lem:exps}
There exists a  $\TT^{(\kappa_n)}_{n,\fm}$-valued pseudo-representation $P^{(\kappa_n)}_{n,\fm}$ of $\rG_F$ of degree $2$
which is unramified at all  primes $\fq \nmid \fn p $ and $P^{(\kappa_n)}_{n,\fm}(\Frob_\fq)=(T_\fq, \diam{\fq})$. 
In particular, after replacing  $\cO$ by a finite unramified extension, there exists a unique semi-simple Galois representation
\[ \rhobar_{\fm}: \rG_F \to \GL_2(\TT^{(1)}/\fm)\]
unramified outside $\fn p \infty$ satisfying
\[ \tr(\rhobar_{\fm}(\Frob_\fq)) = T_\fq \pmod{\fm} \textnormal{ and  }\det(\rhobar_{\fm}(\Frob_\fq)) = \diam{\fq} \pmod{\fm} \]
for all primes $\fq \nmid \fn p$. 
\end{lem}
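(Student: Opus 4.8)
The plan is to reduce to a higher weight $\kappa_n$, where Galois representations attached to Hilbert modular eigenforms are available, and to assemble them into a pseudo-representation over the characteristic-zero Hecke algebra before reducing modulo $\varpi^n$ and localising at $\fm$. Since $\kappa_n$ may be taken larger than any given bound (Lemma~\ref{lem:Hasse}), I would assume throughout that $\kappa_n \geqslant \max(\kappa_0,2)$, so that Lemma~\ref{lem:lifting} yields the isomorphism $\TT^{(\kappa_n)}_n \cong \TT^{(\kappa_n)}_\cO \otimes_\cO \cO/\varpi^n$ recorded after its statement. It then suffices to produce a degree~$2$ pseudo-representation $(T,D)$ of $\rG_F$ valued in the torsion-free algebra $\TT^{(\kappa_n)}_\cO$, unramified outside $\fn p\infty$, with $T(\Frob_\fq)=T_\fq$ and $D(\Frob_\fq)=\diam{\fq}$ for all $\fq\nmid\fn p$; the assertion over $\TT^{(\kappa_n)}_{n,\fm}$ follows by reducing modulo $\varpi^n$ and projecting onto the local factor $\TT^{(\kappa_n)}_{n,\fm}$ of the Artinian ring $\TT^{(\kappa_n)}_n$.

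First I would use that, because the operators $T_\fq,\diam{\fq}$ with $\fq\nmid\fn p$ act semisimply on $M_{\kappa_n}(\fn,\Qbar_p)$, the algebra $\TT^{(\kappa_n)}_\cO\otimes_\cO K$ is \'etale over $K$, hence a finite product $\prod_i L_i$ of fields indexed by the Galois orbits of normalised eigenforms $f$ in $M_{\kappa_n}(\fn,\Qbar_p)$. To each such $f$ of parallel weight $\kappa_n\geqslant 2$ one attaches, by the work of Carayol, Taylor, Blasius--Rogawski and Wiles, a continuous semisimple representation $\rho_f:\rG_F\to\GL_2(\Qbar_p)$, unramified outside $\fn p\infty$, with $\tr\rho_f(\Frob_\fq)$ the $T_\fq$-eigenvalue of $f$ and $\det\rho_f(\Frob_\fq)=\diam{\fq}$ in the normalisation of the Hecke and diamond operators fixed in \cite{DiWi} (geometric Frobenius). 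Taking the product over all $f$ gives $\rho=(\rho_f)_f:\rG_F\to\GL_2(\TT^{(\kappa_n)}_\cO\otimes_\cO K)$, and I set $(T,D)=(\tr\rho,\det\rho)$, a degree~$2$ pseudo-representation valued a priori in $\TT^{(\kappa_n)}_\cO\otimes_\cO K$. By construction $T(\Frob_\fq)=T_\fq$ and $D(\Frob_\fq)=\diam{\fq}$ lie in $\TT^{(\kappa_n)}_\cO$; as the latter is a finite free $\cO$-module it is an $\cO$-lattice, hence closed for the $p$-adic topology, and since the $\Frob_\fq$ with $\fq\nmid\fn p$ are dense in the Galois group of the maximal extension of $F$ unramified outside $\fn p\infty$ by Chebotarev, while $T,D$ are continuous and factor through that quotient, it follows that $T$ and $D$ take values in $\TT^{(\kappa_n)}_\cO$. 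Unramifiedness at each $\fq\nmid\fn p$ is inherited from that of every $\rho_f$.

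Reducing $(T,D)$ modulo $\varpi^n$ and localising at $\fm$ produces the desired $P^{(\kappa_n)}_{n,\fm}$. For the residual statement I would then reduce $P^{(\kappa_n)}_{n,\fm}$ modulo the maximal ideal of $\TT^{(\kappa_n)}_{n,\fm}$; since the surjection $\TT^{(\kappa_n)}_n\twoheadrightarrow\TT^{(1)}_n$ (see Proposition~\ref{prop:doubling}) matches $T_\fq$ and $\diam{\fq}$, the residue field is $k:=\TT^{(1)}/\fm$, and one obtains a degree~$2$ pseudo-representation $\overline P=(\overline T,\overline D)$ over the field $k$ with $\overline T(\Frob_\fq)=T_\fq\bmod\fm$ and $\overline D(\Frob_\fq)=\diam{\fq}\bmod\fm$. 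By the theory of two-dimensional pseudo-representations over a field \cite{C,nyssen}, $\overline P$ is the pseudo-representation of a semisimple representation, unique up to isomorphism, and as $k$ is generated by the traces it descends to $\GL_2(k)$; this is the required $\rhobar_{\fm}$, unramified outside $\fn p\infty$, with the stated Frobenius traces and determinants.

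The main obstacle is the external input: the existence of the Galois representations $\rho_f$ in parallel weight $\kappa_n\geqslant 2$, together with the verification that their determinant is exactly $\diam{\fq}$ in the chosen (geometric Frobenius) normalisation rather than a norm-twist of it — this compatibility is precisely what makes the weight-$1$ conclusion $D(\Frob_\fq)=\diam{\fq}$ come out correctly after doubling. The remaining points (semisimplicity of the prime-to-$\fn p$ Hecke action, the Chebotarev-and-closedness descent of $(T,D)$ to $\TT^{(\kappa_n)}_\cO$, and the field-case reconstruction of $\rhobar_{\fm}$) are standard.
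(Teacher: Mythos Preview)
Your overall strategy matches the paper's: attach Galois representations to the weight-$\kappa_n$ eigenforms, glue their pseudo-representations into one valued in $\TT^{(\kappa_n)}_\cO \otimes_\cO K$, descend to $\TT^{(\kappa_n)}_\cO$ via Chebotarev and torsion-freeness, then reduce modulo~$\varpi^n$ and localise at~$\fm$. The residual part is also handled the same way.

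There is, however, a genuine gap in your determinant computation. You assert that $\det\rho_f(\Frob_\fq)=\diam{\fq}$ in characteristic zero, and then flag as an ``obstacle'' the verification that the determinant is not a norm-twist. In fact it \emph{is} a norm-twist: for a Hilbert eigenform $f$ of parallel weight $\kappa_n$ with nebentypus $\psi_f$, one has $\det\rho_f(\Frob_\fq)=\psi_f(\fq)\,\Norm(\fq)^{\kappa_n-1}$, so over $\TT^{(\kappa_n)}_\cO$ the pseudo-representation satisfies $D(\Frob_\fq)=\diam{\fq}\,\Norm(\fq)^{\kappa_n-1}$, not $\diam{\fq}$. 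Your Chebotarev descent still works (this value lies in $\TT^{(\kappa_n)}_\cO$), but you do not arrive at the stated conclusion $D(\Frob_\fq)=\diam{\fq}$ before reducing.

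The paper fixes this precisely via the choice of~$\kappa_n$ in Lemma~\ref{lem:Hasse}: since $\kappa_n-1$ is a multiple of $(p-1)p^{n-1}$, one has $\Norm(\fq)^{\kappa_n-1}\equiv 1 \pmod{p^n}$, hence $\equiv 1 \pmod{\varpi^n}$, so the norm factor disappears \emph{after} reducing modulo~$\varpi^n$. (The same observation is used again in Lemma~\ref{lem:ordps} to show that the weight-$\kappa_n$ ordinarity condition becomes a weight-$1$ ordinarity condition modulo~$\varpi^n$, via $\chi_p^{\kappa_n-1}\equiv 1\pmod{\varpi^n}$.) So the resolution of your flagged obstacle is not a normalisation issue at all; it is the congruence property of $\kappa_n$ built into Lemma~\ref{lem:Hasse}. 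Insert this step and your argument goes through.
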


\begin{proof}
After enlarging $K$, we may assume that it contains all the eigenvalues of $\TT^{(\kappa_n)}$ acting on  $M_{\kappa_n}(\fn,\cO)$. 
The $\cO$-algebra $ \TT^{(\kappa_n)}$ generated by the Hecke operators outside the level and $p$ is torsion-free and reduced, hence 
$ \TT^{(\kappa_n)}_{\fm} \otimes_{\cO}K = \prod_{g\in \mathcal{N}} K$ where $\mathcal{N}$ denotes the set of newforms  occurring in $M_{\kappa_n}(\fn,\cO)_{\fm}$. 
As is well known, one can attach to each such eigenform $g$ a $\rG_F$-pseudo-representation $P_g$ of degree~$2$ unramified outside $\fn p \infty$ such that 
$P_g(\Frob_\fq)=(a(\fq,g),\psi_g(\fq) \Norm(\fq)^{\kappa_n-1})$  for all $\fq \nmid \fn p$, 
where  $\langle \fq \rangle g= \psi_g(\fq) g$.  
Since  the natural homomorphism $ \TT^{(\kappa_n)}_{\fm} \to \TT^{(\kappa_n)}_{\fm} \otimes_{\cO}K$ is injective,  in view of the Chebotarev Density Theorem, we obtain a $\TT^{(\kappa_n)}_{\fm}$-valued $\rG_F$-pseudo-representation $P^{(\kappa_n)}_{\fm}$  unramified outside $\fn p \infty$ 
such that $P^{(\kappa_n)}_{\fm}(\Frob_\fq) = (T_{\fq}, \langle \fq \rangle\Norm(\fq)^{\kappa_n-1})$ for all $\fq \nmid \fn p$ (see  \cite[Cor.~1.14]{C}). 

Note that $\rN(\fq)^{\kappa_n-1} \equiv 1 \pmod{\varpi^n}$.
Composing $P^{(\kappa_n)}_{\fm}$ with the surjection $\TT^{(\kappa_n)}_{\fm} \twoheadrightarrow \TT^{(\kappa_n)}_{n,\fm}$, we get the desired pseudo-representation.
Finally  $\TT^{(\kappa_n)}_n/\fm=\TT^{(1)}_n/\fm$ along with \cite[Thm.~A]{C} finishes the proof  of the lemma.
\end{proof}

Let $R^\ps_\fm $ be the universal deformation ring of the corresponding degree $2$ pseudo-repre\-sen\-tation
$\overline{P}_{\fm} = (\tr\circ \rhobar_{\fm}, \det\circ \rhobar_{\fm})$ unramified outside  $\fn p\infty$ in the category of complete Noetherian local $\cO$-algebras with residue field $\FF$
(chosen large enough in order to contain the residue field of $\TT^{(1)}_\fm$). 
Using the surjection  $\TT^{(\kappa_n)}_{\fm} \twoheadrightarrow \TT^{(\kappa_n)}_{n,\fm} \twoheadrightarrow \TT^{(1)}_{n,\fm}$
and then passing to the projective limit $\TT^{(1)}_{\fm}=\varprojlim\limits_n \TT^{(1)}_{n,\fm}$, we obtain  the 
following result.

\begin{cor}\label{cor:ex}
For any  maximal ideal~$\fm$ of $\TT^{(1)}$, there exists a $\TT^{(1)}_{\fm}$-valued pseudo-representation $P^{(1)}_{\fm}$ of $\rG_F$  of  degree $2$  which is  unramified for all  primes $\fq \nmid \fn p$ and  $P^{(1)}_{\fm}(\Frob_\fq)=(T_\fq, \diam{\fq})$. It yields a surjection $R^\ps_\fm  \twoheadrightarrow  \TT^{(1)}_{\fm}$.
\end{cor}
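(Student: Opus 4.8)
The plan is to descend the pseudo-representation of Lemma~\ref{lem:exps} from weight $\kappa_n$ to weight $1$ via the doubling surjection, to assemble the resulting family over all $n$ into a single pseudo-representation valued in the inverse limit $\TT^{(1)}_\fm$, and then to read off the asserted surjection from the universal property of $R^\ps_\fm$.

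First I would fix $n$ and compose the $\TT^{(\kappa_n)}_{n,\fm}$-valued degree-$2$ pseudo-representation $P^{(\kappa_n)}_{n,\fm}$ of Lemma~\ref{lem:exps} with the $\cO$-algebra surjection $\TT^{(\kappa_n)}_{n,\fm}\twoheadrightarrow\TT^{(1)}_{n,\fm}$ coming from the doubling map (Proposition~\ref{prop:doubling} and the discussion preceding Corollary~\ref{cor:doubling}). Composing a degree-$2$ pseudo-representation with a ring homomorphism again yields one, so this produces a $\TT^{(1)}_{n,\fm}$-valued pseudo-representation $P^{(1)}_{n,\fm}$, still unramified at every $\fq\nmid\fn p$, with $P^{(1)}_{n,\fm}(\Frob_\fq)=(T_\fq,\diam{\fq})$, because the surjection sends $T_\fq\mapsto T_\fq$ and $\diam{\fq}\mapsto\diam{\fq}$.

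Next I would verify that the $P^{(1)}_{n,\fm}$ are compatible with the transition maps $\TT^{(1)}_{n,\fm}\to\TT^{(1)}_{n-1,\fm}$. Both the composite of $P^{(1)}_{n,\fm}$ with the transition map and $P^{(1)}_{n-1,\fm}$ are continuous $\TT^{(1)}_{n-1,\fm}$-valued pseudo-representations of $\rG_F$ taking the value $(T_\fq,\diam{\fq})$ on each $\Frob_\fq$ with $\fq\nmid\fn p$; since such a pseudo-representation is determined by the continuous class function $g\mapsto(T(g),D(g))$ and hence by its values on the set of Frobenius classes, which is dense by the Chebotarev Density Theorem, the two coincide. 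Passing to the inverse limit over $n$ therefore yields a continuous degree-$2$ pseudo-representation $P^{(1)}_\fm$ valued in $\TT^{(1)}_\fm=\varprojlim_n\TT^{(1)}_{n,\fm}$, unramified at all $\fq\nmid\fn p$, with $P^{(1)}_\fm(\Frob_\fq)=(T_\fq,\diam{\fq})$; its reduction modulo the maximal ideal is $\overline{P}_\fm=(\tr\circ\rhobar_\fm,\det\circ\rhobar_\fm)$, so $P^{(1)}_\fm$ is a pseudo-deformation of $\overline{P}_\fm$ unramified outside $\fn p\infty$.

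Finally, the universal property of $R^\ps_\fm$ provides a unique local $\cO$-algebra homomorphism $R^\ps_\fm\to\TT^{(1)}_\fm$ under which the universal pair $(T^\ps,D^\ps)$ pulls back to $P^{(1)}_\fm$, so that $T^\ps(\Frob_\fq)\mapsto T_\fq$ and $D^\ps(\Frob_\fq)\mapsto\diam{\fq}$. Its image is a closed $\cO$-subalgebra containing every $T_\fq$ and $\diam{\fq}$ with $\fq\nmid\fn p$; as these generate $\TT^{(1)}_\fm$ topologically, the homomorphism is surjective. I expect the only delicate point to be the inverse-limit step, where one must check that the finite-level pseudo-representations genuinely agree under the transition maps; this is precisely the uniqueness of a pseudo-representation prescribed by its Frobenius values, which is guaranteed by Chenevier's formalism~\cite{C}. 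No further obstacle is anticipated, since the substantive input---existence in each weight $\kappa_n$ and the weight-lowering doubling surjection---has already been established in Lemma~\ref{lem:exps} and in~\S\ref{sec:doubling}.
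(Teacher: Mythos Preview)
Your proposal is correct and follows essentially the same route as the paper: compose the pseudo-representation from Lemma~\ref{lem:exps} with the surjection $\TT^{(\kappa_n)}_{n,\fm}\twoheadrightarrow\TT^{(1)}_{n,\fm}$, then pass to the projective limit $\TT^{(1)}_\fm=\varprojlim_n\TT^{(1)}_{n,\fm}$. The paper compresses this into a single sentence preceding the corollary, whereas you spell out the compatibility check via Chebotarev and the surjectivity argument from the universal property; these details are correct. One minor terminological point: the surjection $\TT^{(\kappa_n)}_{n,\fm}\twoheadrightarrow\TT^{(1)}_{n,\fm}$ you invoke comes simply from the Hecke-equivariant embedding $M_1(\fn,\cO/\varpi^n)\hookrightarrow M_{\kappa_n}(\fn,\cO/\varpi^n)$ given by multiplication by the Hasse invariant $h_n$, not from the full doubling map $(h_n,V_{\fp,n})$; the latter is only needed later for the ordinarity statement, and in particular no hypothesis on $e_\fp$ is required at this stage.
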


Note that for a maximal ideal $\fm$ of  $\TT^{(\kappa_n)}_n$, the algebra $\widetilde{\TT}^{(\kappa_n)}_{n,\fm}$ 
is in general only semi-local (see \eqref{eq:TT-tilde}). By the main result of~\cite{DiWi}, $\rhobar_{\fm}$ is unramified at~$\fp$, allowing us to consider the ideal 
\begin{align}\label{eq:m-tilde}
\widetilde{\fm}=
\left(\fm,  (T_\fp^{(\kappa_n)})^2 - \widehat{\tr(\rhobar_{\fm}(\Frob_\fp ))} T_\fp^{(\kappa_n)} +\widehat{ \det(\rhobar_{\fm}(\Frob_\fp ))}\right)
\subset \widetilde{\TT}^{(\kappa_n)}_n,
\end{align}
where $\widehat{\tr(\rhobar_{\fm}(\Frob_\fp ))}$ and $\widehat{ \det(\rhobar_{\fm}(\Frob_\fp ))}$ are some lifts of $\tr(\rhobar_{\fm}(\Frob_\fp ))$ and  $\det(\rhobar_{\fm}(\Frob_\fp ))$, respectively in $\TT^{(\kappa_n)}_n$. 
Note that the ideal $\widetilde{\fm}$ does not depend on the choices of these lifts.

Let $\widetilde{\TT}^{(\kappa_n)}_{n,\widetilde{\fm}}$ be the completion of $\widetilde{\TT}^{(\kappa_n)}_{n}$ with respect to $\widetilde{\fm}$.
The algebra $\widetilde{\TT}^{(\kappa_n)}_{n,\widetilde{\fm}}$ then has at most two local components. 
Let $R^\ord_\fm $ be the universal $\cO$-algebra classifying deformations  of~$\overline{P}_{\fm}$ 
 unramified outside primes dividing $\fn p\infty$ and ordinary  at~$\fp$ of weight $1$
  (see \eqref{eq:pseudo-ord-univ}). 

\begin{lem}\label{lem:ordps}
There exists a $\fp$-ordinary $\widetilde{\TT}^{(\kappa_n)}_{n,\widetilde{\fm}}$-valued pseudo-representation 
$\widetilde{P}_{n,\widetilde\fm}^{(\kappa_n)}=(P_{n,\fm}^{(\kappa_n)},T_\fp^{(\kappa_n)})$ of degree $2$ and weight $1$ of $\rG_F$ such that $P_{n,\fm}^{(\kappa_n)}(\Frob_\fq)=(T_\fq, \diam{\fq})$ for all $\fq \nmid \fn p$. It yields a surjection $R^\ord_\fm \twoheadrightarrow \widetilde{\TT}^{(\kappa_n)}_{n,\widetilde{\fm}}$.
\end{lem}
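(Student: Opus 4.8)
The plan is to transport the pseudo-representation of Lemma~\ref{lem:exps} to the completed Hecke algebra, to take $\alpha_\fp := T_\fp^{(\kappa_n)}$ as the auxiliary root, to verify the three conditions of ordinarity of weight~$1$ from Definition~\ref{defi:ordinary} modulo $\varpi^n$, and finally to read off the surjection from the universal property of $R^\ord_\fm$.

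First I would push $P^{(\kappa_n)}_{n,\fm}$ forward along $\TT^{(\kappa_n)}_{n,\fm}\to \widetilde{\TT}^{(\kappa_n)}_{n,\widetilde{\fm}}$, keeping $P(\Frob_\fq)=(T_\fq,\diam{\fq})$ for $\fq\nmid\fn p$, and set $\widetilde{P}=(P^{(\kappa_n)}_{n,\fm},T_\fp^{(\kappa_n)})$. To check the conditions I would lift to the torsion-free algebra $\TT^{(\kappa_n)}_\cO[T_\fp^{(\kappa_n)}]$ acting on $M_{\kappa_n}(\fn,\cO)$ and localise at the preimage of $\widetilde{\fm}$, so that everything may be tested on the characteristic-zero eigenforms $g$ occurring there via $P_g=(\tr\rho_g,\det\rho_g)$. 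The key preliminary observation is that $\det\rhobar_\fm(\Frob_\fp)=\overline{\diam{\fp}}$ is a \emph{unit}: indeed $\det\rhobar_\fm(\Frob_\fq)=\overline{\diam{\fq}\,\rN(\fq)^{\kappa_n-1}}=\overline{\diam{\fq}}$ because $(p-1)\mid(\kappa_n-1)$ forces $\rN(\fq)^{\kappa_n-1}\equiv1\pmod p$, so by Chebotarev $\det\rhobar_\fm$ is the prime-to-$p$ nebentypus. Consequently both roots of $X^2-\overline{\tr\rhobar_\fm(\Frob_\fp)}X+\overline{\diam{\fp}}$ are non-zero, and an eigenform $g$ can contribute to the $\widetilde{\fm}$-completion only if $\overline{a(\fp,g)}$ equals one of them; a supersingular $g$ (with $\overline{a(\fp,g)}=0$) is thereby excluded, so the whole verification reduces to $\fp$-ordinary $g$.

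For such a $g$, the classical structure theorem for $\fp$-ordinary Hilbert eigenforms (Wiles) gives $\rho_g|_{\rG_{F_\fp}}\sim\left(\begin{smallmatrix}\chi_p^{\kappa_n-1}\eta_1 & * \\ 0 & \eta_2\end{smallmatrix}\right)$ with $\eta_1,\eta_2$ unramified and $\eta_2(\Frob_\fp)=\alpha_g$, the unit root of the Hecke polynomial $X^2-a(\fp,g)X+\psi_g(\fp)\rN(\fp)^{\kappa_n-1}$. Condition~(i) follows after reduction: $\det\rho_g=\psi_g\chi_p^{\kappa_n-1}\equiv\psi_g\pmod{\varpi^n}$ since $(p-1)p^{n-1}\mid\kappa_n-1$ gives $\chi_p^{\kappa_n-1}\equiv1\pmod{\varpi^n}$, and $\psi_g$ is unramified at~$\fp$, whence $D(h-1)\equiv0$; the same congruence turns $\tr\rho_g(h)=\chi_p^{\kappa_n-1}(h)+1$ on $\rI_\fp$ into $T(h-1)\equiv0\pmod{\varpi^n}$. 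For condition~(ii) a direct $2\times2$ computation shows $\bigl(\rho_g(h)-\chi_p^{\kappa_n-1}(h)\bigr)\bigl(\rho_g(h'\Frob_\fp)-\alpha_g\bigr)=0$ identically, the first factor annihilating the unramified quotient and the second, with $\alpha_g=\eta_2(\Frob_\fp)$, the sub.

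The main obstacle is that $\alpha_\fp=T_\fp^{(\kappa_n)}$ has eigenvalue $a(\fp,g)=\alpha_g+\beta_g$ rather than the unit root $\alpha_g$, so the identity just obtained is not the one required. The decisive point is that the complementary root satisfies $v_p(\beta_g)=f_\fp(\kappa_n-1)$, so that $\beta_g\in\varpi^n\cO$ once $\kappa_n$ is taken large (as Lemma~\ref{lem:Hasse} permits); hence $T_\fp^{(\kappa_n)}\equiv\alpha_g\pmod{\varpi^n}$, and writing $h'\Frob_\fp-T_\fp^{(\kappa_n)}=(h'\Frob_\fp-\alpha_g)-\beta_g$ together with the $\cO$-linearity of $T$ shows the extra term is $\beta_g$ times a value of $T$, hence lies in $\varpi^n\cO$, so condition~(ii) holds modulo~$\varpi^n$. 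Thus $\widetilde{P}$ is ordinary at~$\fp$ of weight~$1$ with residual pseudo-representation $\overline{P}_\fm$, and the universal property of $R^\ord_\fm$ furnishes an $\cO$-algebra homomorphism $R^\ord_\fm\to\widetilde{\TT}^{(\kappa_n)}_{n,\widetilde{\fm}}$ sending $T^\ps(\Frob_\fq)\mapsto T_\fq$, $D^\ps(\Frob_\fq)\mapsto\diam{\fq}$ and $X\mapsto T_\fp^{(\kappa_n)}$. As these elements generate $\widetilde{\TT}^{(\kappa_n)}_{n,\widetilde{\fm}}$ over $\cO$, the homomorphism is surjective.
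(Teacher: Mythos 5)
Your overall strategy --- reduce to the characteristic-zero eigenforms contributing to the $\widetilde{\fm}$-completion, observe that they are forced to be $\fp$-ordinary, invoke the local structure of $\rho_g$ at $\fp$, and exploit that the non-unit root $\beta_g$ of the Hecke polynomial has valuation $f_\fp(\kappa_n-1)\geqslant n$ --- is exactly the paper's. But your last step has a genuine gap. You verify condition (ii) of ordinarity \emph{componentwise}: on the $g$-eigenspace the error term is $\beta_g$ times a value of $T$, hence lies in $\varpi^n\cO$. What is actually required is that the element $T\big(g(h-\chi_p^{\kappa_n-1}(h))(h'\Frob_\fp-T_\fp^{(\kappa_n)})\big)$ of the torsion-free algebra $\widetilde{\TT}^{(\kappa_n)}_{\widetilde{\fm}}$ dies in the quotient $\widetilde{\TT}^{(\kappa_n)}_{n,\widetilde{\fm}}$, {\it i.e.}, that it annihilates $M_{\kappa_n}(\fn,\cO)\otimes_\cO\cO/\varpi^n$. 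An operator whose eigenvalue on each eigencomponent lies in $\varpi^n\cO$ need not lie in $\varpi^n\widetilde{\TT}^{(\kappa_n)}_{\widetilde{\fm}}$, nor act trivially modulo $\varpi^n$: the lattice $M_{\kappa_n}(\fn,\cO)$ is not the direct sum of its intersections with the eigenspaces, and congruences between eigenforms are precisely the situation at hand. So ``each component is in $\varpi^n\cO$'' does not by itself yield ``condition (ii) holds in $\widetilde{\TT}^{(\kappa_n)}_{n,\widetilde{\fm}}$''.

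The paper circumvents this by never taking a congruence componentwise. It first uses Hensel's lemma to produce the unique unit root $U\in\widetilde{\TT}^{(\kappa_n)}_{\widetilde{\fm}}$ of $X^2-T_\fp^{(\kappa_n)}X+\diam{\fp}\Norm(\fp)^{\kappa_n-1}$, an honest element of the Hecke algebra specialising to $\alpha_{\fp,g}$ on each $g$; the glued pair $(P^{(\kappa_n)}_{\fm},U)$ is then ordinary of weight $\kappa_n$ \emph{on the nose} (the relevant traces vanish in $\prod_{g}\cO$, hence in the subring), and only afterwards does one reduce modulo $\varpi^n$, identifying $U$ with $T_\fp^{(\kappa_n)}$ in $\widetilde{\TT}^{(\kappa_n)}_{n,\widetilde{\fm}}$ by the uniqueness of the unit root of $X(X-T_\fp^{(\kappa_n)})$ there. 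Your argument can be repaired along the same lines: once $U$ is introduced, the error you must control is $(T_\fp^{(\kappa_n)}-U)\cdot T(\,\cdot\,)$, and $T_\fp^{(\kappa_n)}-U=\diam{\fp}\Norm(\fp)^{\kappa_n-1}U^{-1}$ lies in $\varpi^n\widetilde{\TT}^{(\kappa_n)}_{\widetilde{\fm}}$ \emph{as a ring element}, which is what the descent to the quotient actually needs. (The rest of your write-up --- the exclusion of non-ordinary eigenforms via \eqref{eq:m-tilde}, the $2\times 2$ computation for condition (ii), and the surjectivity --- matches the paper; note only that attributing the local structure of $\rho_g$ at $\fp$ to Wiles alone is insufficient when $g$ is ordinary just at the single place $\fp$, which is why the paper also invokes local-global compatibility due to Saito and Skinner.)
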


\begin{proof}
Let $\widetilde{\TT}^{(\kappa_n)} = \TT^{(\kappa_n)}[T^{(\kappa_n)}_\fp]$ and denote also by $\widetilde\fm$ the ideal of $\widetilde\TT^{(\kappa_n)}$ defined as the pull-back of $\widetilde\fm \subset\widetilde \TT^{(\kappa_n)}_n$. Let $\widetilde{\TT}^{(\kappa_n)}_{\widetilde{\fm}}$ be the completion of $\widetilde{\TT}^{(\kappa_n)}$ with respect to $\widetilde{\fm}$.

We have  $\widetilde{\TT}^{(\kappa_n)}_{\widetilde{\fm}} \otimes_{\cO}K = \prod_{g\in \widetilde{\mathcal{N}}} K$, where $\widetilde{\mathcal{N}}$  denotes the subset of $\mathcal{N}$ (see  the proof of Lemma \ref{lem:exps}) consisting of newforms  occurring in $M_{\kappa_n}(\fn,\cO)_{\widetilde\fm}$. As $\fp$ does not divide $\fn$, any $g\in \widetilde{\mathcal{N}}$ is an eigenvector for
$T_\fp^{(\kappa_n)}$ (resp. $\langle \fp \rangle$) whose  eigenvalue $a(\fp,g)$ (resp. $\psi_g(\fp)$) is necessarily a $p$-adic unit by \eqref{eq:m-tilde}, {\it i.e.},   $g$ is $\fp$-ordinary. By a result due to  Hida and Wiles, when $g$ is ordinary at all places dividing $p$, and to 
Saito \cite{saito} and Skinner \cite{skinner} in general, $p$-adic Galois representation $\rho_g$ attached to $g$ is ordinary at~$\fp$, {\it i.e.}, its restriction to $\rG_{F_\fp}$ has a one-dimensional unramified quotient on which $\Frob_\fp$ acts by the (unique)
$p$-adic unit root $\alpha_{\fp, g}$ of the Hecke polynomial $X^2-a(\fp,g) X + \psi_g(\fp) \Norm(\fp)^{\kappa_n-1}$. 
This implies that  $\alpha_{\fp, g}$ is also a root  of 
$X^2-\tr(\rho_g)(\Frob_\fp)X+\det(\rho_g)(\Frob_\fp)$, for {\it any} choice of a Frobenius element $\Frob_\fp\in \rG_{F_\fp}$. 
Thus, the pseudo-representation $P_g=(\tr(\rho_g), \det(\rho_g))$ 
is  $\fp$-ordinary of weight $\kappa_n$ with respect to  $\alpha_{\fp, g}$ in the sense of Definition~\ref{defi:ordinary}. 

Since  $\widetilde{\TT}^{(\kappa_n)}_{\widetilde{\fm}}$ is a semi-local finite $\cO$-algebra, applying Hensel's lemma to each local component shows that the polynomial $X^2 - T_{\fp}^{(\kappa_n)} X +  \langle \fp \rangle\Norm(\fp)^{\kappa_n-1}$ admits a unique 
 unit root $U$  in $\widetilde{\TT}^{(\kappa_n)}_{\widetilde{\fm}}$.
By the Chebotarev Density Theorem, gluing the $\fp$-ordinary pseudo-representations $\widetilde{P}_g=(P_g, \alpha_{\fp, g})$ for all  $g\in \widetilde{\mathcal{N}}$ 
gives us a $\widetilde{\TT}^{(\kappa_n)}_{\widetilde{\fm}}$-valued $\fp$-ordinary pseudo-representation $(P_{\fm}^{(\kappa_n)},U)$ of weight $\kappa_n$ such that $P_{\fm}^{(\kappa_n)}(\Frob_\fq)=(T_\fq, \diam{\fq}\Norm(\fq)^{\kappa_n-1})$ for all $\fq \nmid \fn p$.
We have $\chi_p^{\kappa_n-1} (g) \equiv 1 \pmod{\varpi^n}$ for all $g \in G_F$.
Hence, the reduction of $(P_{\fm}^{(\kappa_n)},U)$ to $\widetilde{\TT}^{(\kappa_n)}_{n,\widetilde{\fm}}$ is a $\fp$-ordinary pseudo-representation of weight $1$.
Note that by Hensel's Lemma,  $U$ reduces  to $T_\fp^{(\kappa_n)}$ in $\widetilde{\TT}^{(\kappa_n)}_{n,\widetilde{\fm}}$,
since the former (resp.\ the latter) is the unique unit root of $X^2 - T_{\fp}^{(\kappa_n)} X +  \langle \fp \rangle\Norm(\fp)^{\kappa_n-1}$
in $\widetilde{\TT}^{(\kappa_n)}_{\widetilde{\fm}}$ (resp. in $\widetilde{\TT}^{(\kappa_n)}_{n,\widetilde{\fm}}$). 
As $\Norm(\fq)^{\kappa_n-1} \equiv 1 \pmod{\varpi^n}$ for all $\fq \nmid \fn p$, this completes the proof of the lemma.
\end{proof}

\subsection{Proof of the main theorem}
In the proof of Theorem~\ref{thm:main} we can assume without loss of generality that $\fn$  satisfies \eqref{ass:21}, because given any prime $q$, the Hecke algebra in level~$\fn$ is a quotient of the one in level~$\fn q$.
Moreover, since the algebra $\TT^{(1)}$ is semi-local, equal to the product of $\TT^{(1)}_{\fm}$ where $\fm$ runs over its maximal ideals, 
it is enough to prove the theorem after localisation at~$\fm$.

Recall that in Corollary~\ref{cor:ex} we constructed a $\TT^{(1)}_\fm$-valued pseudo-representation $P^{(1)}_\fm=(T,D)$ of $\rG_F$, whose image under  the surjective homomorphism $\TT^{(1)}_\fm \twoheadrightarrow \TT^{(1)}_{\cusp,\fm}\twoheadrightarrow \TT^{(1)}_{\cusp,n,\fm}$
will be denoted by  $P^{(1)}_{n,\fm}=(T_n,D_n)$, for $n\in \NN$.   This gives the first row of the following commutative diagram:
\begin{align}\label{eq:diagpf}
 \xymatrix@R=1cm@C=2cm{
R^\ps_\fm  \ar@{->>}[r]^{\textnormal{Cor.~\ref{cor:ex}}} \ar@{->}[d]& \TT^{(1)}_{\cusp,\fm} \ar@{->>}[r] & \TT^{(1)}_{\cusp,n,\fm} \ar@{^(->}[d] \\
R^\ord_\fm \ar@{->>}[r]^{\textnormal{Lemma~\ref{lem:ordps}}} & \widetilde{\TT}^{(\kappa_n)}_{n,\widetilde{\fm}} \ar@{->>}[r]^(.3){\textnormal{Cor.~\ref{cor:doubling}}} & \TT^{(1)}_{\cusp,n,\fm}[T_\fp^{(1)},U]/(U^2 - T_\fp^{(1)} U+\diam{\fp}). 
}\end{align}
The morphisms in the second row come from Lemma~\ref{lem:ordps} and Corollary~\ref{cor:doubling}.
Combining them, we see that $\widetilde{P}^{(1)}_{n,\fm} = (P^{(1)}_{n,\fm}, U)$ is a $\fp$-ordinary pseudo-representation of weight $1$.

We now perform the key `doubling' step, as presented in \cite[Prop. 2.10]{CaSp}, and slightly improved upon, since the
surjectivity of the composed map $R^\ord_\fm\to  \TT^{(1)}_{\cusp,n,\fm}[T_\fp^{(1)},U]/(U^2 - T_\fp^{(1)} U+\diam{\fp})$ will not be used in the sequel. 
One has 
\[\TT^{(1)}_{\cusp,n,\fm}[T_\fp^{(1)},U]/(U^2 - T_\fp^{(1)} U+\diam{\fp})=\TT^{(1)}_{\cusp,n,\fm}[T_\fp^{(1)}]\oplus U\cdot \TT^{(1)}_{\cusp,n,\fm}[T_\fp^{(1)}]. \]
Since $\widetilde {P}^{(1)}_{n,\fm}$ is ordinary at $\fp$ of weight $1$, for all $g\in \rG_F$ and $h\in \rI_\fp$ the following equality holds: 
\[T_n(gh \Frob_\fp)- T_n(g \Frob_\fp)= U(T_n(gh)-T_n(g))\in \TT^{(1)}_{\cusp,n,\fm}[T_\fp^{(1)}]\cap U\TT^{(1)}_{\cusp,n,\fm}[T_\fp^{(1)}]=\{ 0\}, \]
hence $T_n(gh)=T_n(g)$, {\it i.e.}, $P^{(1)}_{n,\fm}$ is unramified at~$\fp$. 

Note that $U$ satisfies the following relations
\[ U^2 - T_\fp^{(1)} U + \langle \fp \rangle = 0 \text{ and }  U^2 - T_n(\Frob_\fp) U + D_n(\Frob_\fp) = 0 \]
 in the ring $\TT^{(1)}_{\cusp,n,\fm}[T_\fp^{(1)},U]/(U^2 - T_\fp^{(1)} U+\diam{\fp})$. 
Indeed, the second relation follows from the fact that $(P^{(1)}_{n,\fm},U)$ is a $\fp$-ordinary pseudo-represen\-ta\-tion of weight $1$.
As the former polynomial is minimal, one obtains the  desired equality $(T_n(\Frob_\fp), D_n(\Frob_\fp))=(T_\fp^{(1)},\langle \fp \rangle)$, 
in particular $T_\fp^{(1)}\in \TT^{(1)}_{\cusp, n,\fm}$.  Letting $n$ vary  finishes the proof of Theorem~\ref{thm:main} for $\TT^{(1)}_{\cusp,\fm}$. 

In order to obtain the theorem for $\TT^{(1)}_{\fm}$, we replace $\TT^{(1)}_{\cusp,\fm}$ by $\TT^{(1)}_{\fm}$, $\TT^{(1)}_{\cusp,n,\fm}$ by $\TT^{(1)}_{n,\fm}$, and $S_1(\fn ,\cO/\varpi^n)$ by $M_1(\fn ,\cO/\varpi^n)$ throughout. The arguments continue to work if we assume that $p-1$ does not divide~$e_\fp$, which is used in Corollary~\ref{cor:doubling}.

\begin{cor}\label{cor:onefull}
Let $\fp\mid p$. Then $T_\fp^{(1)}  \in \TT_\cusp^{(1)}$, {\it i.e.},  for all $n \in \NN$, the Hecke operator
$T_\fp^{(1)}$ acts on $S_1(\fn ,\cO/\varpi^n)$ by an element of $\TT^{(1)}_{\cusp,n}$.
Moreover, if $(p-1)\nmid e_\fp$, then one also has $T_\fp^{(1)}  \in \TT^{(1)}$.
\end{cor}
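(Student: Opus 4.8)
The plan is to read the corollary off from the computations already carried out in the proof of Theorem~\ref{thm:main}: the substantive work is done there, and what remains is a purely formal descent from the local components $\TT^{(1)}_{\cusp,n,\fm}$ to $\TT^{(1)}_\cusp$, followed by a passage to the limit in $n$.

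First I would fix $n$ and recall that $\TT^{(1)}_{\cusp,n}$ is a complete Artinian semi-local ring, hence equal to the finite product $\prod_\fm \TT^{(1)}_{\cusp,n,\fm}$ over its maximal ideals $\fm$, with $S_1(\fn,\cO/\varpi^n)$ decomposing correspondingly into its localisations. It therefore suffices to check, for each $\fm$, that $T_\fp^{(1)}$ acts on $S_1(\fn,\cO/\varpi^n)_\fm$ through an element of $\TT^{(1)}_{\cusp,n,\fm}$. But this is precisely the conclusion reached at the end of the proof of Theorem~\ref{thm:main}: there the $\fp$-ordinary weight-$1$ pseudo-representation $\widetilde{P}^{(1)}_{n,\fm}=(P^{(1)}_{n,\fm},U)$ yields the equality $(T_n(\Frob_\fp),D_n(\Frob_\fp))=(T_\fp^{(1)},\diam{\fp})$, whose first coordinate $T_n(\Frob_\fp)$ lies in $\TT^{(1)}_{\cusp,n,\fm}$ by the very construction of $P^{(1)}_{n,\fm}$. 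Gluing over the finitely many $\fm$ then gives $T_\fp^{(1)}\in \TT^{(1)}_{\cusp,n}$.

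Next I would let $n$ vary. Since $T_\fp^{(1)}$ commutes with the reduction maps on $S_1$, the elements obtained at each finite level are compatible under the transition maps $\TT^{(1)}_{\cusp,n+1}\twoheadrightarrow \TT^{(1)}_{\cusp,n}$, so they assemble into an element of $\TT^{(1)}_\cusp=\varprojlim_n \TT^{(1)}_{\cusp,n}$ acting as $T_\fp^{(1)}$ on $S_1(\fn,K/\cO)$; this establishes the first assertion. For the ``moreover'' part I would rerun the identical argument with $M_1$ and $\TT^{(1)}_n$ in place of $S_1$ and $\TT^{(1)}_{\cusp,n}$. This is legitimate exactly when $(p-1)\nmid e_\fp$, for that is the hypothesis under which Corollary~\ref{cor:doubling} (and hence Proposition~\ref{prop:doubling}) remains valid for the full, non-cuspidal spaces; it yields $T_\fp^{(1)}\in \TT^{(1)}_{n,\fm}$ for every $\fm$ and $n$, hence $T_\fp^{(1)}\in \TT^{(1)}_n$, and $T_\fp^{(1)}\in\TT^{(1)}$ in the limit.

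I do not expect a genuine obstacle, since all the hard geometric and representation-theoretic input---the injectivity of the doubling map via Proposition~\ref{prop:inj} and Lemma~\ref{lem:onep}, and the ordinarity and gluing of the pseudo-representation in Lemma~\ref{lem:ordps}---is already in place. The one point to state with care is the compatibility of the finite-level elements under the transition maps, so that the inverse limit is well defined; this is immediate from the Hecke-equivariance of the reduction maps $S_1(\fn,\cO/\varpi^{n+1})\to S_1(\fn,\cO/\varpi^n)$.
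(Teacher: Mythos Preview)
Your proposal is correct and follows essentially the same approach as the paper. In fact, the paper does not give a separate proof for this corollary at all: the statement $T_\fp^{(1)}\in \TT^{(1)}_{\cusp,n,\fm}$ is obtained verbatim at the end of the proof of Theorem~\ref{thm:main}, and the semi-local decomposition and passage to the inverse limit you describe are exactly the formal moves the paper has already set up (the reduction to the local case is stated at the outset of that proof, and ``letting $n$ vary'' is invoked explicitly); your write-up simply makes these steps explicit.
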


\subsection{Non-Eisenstein ideals}

\begin{defi}\label{def:eis}
A maximal ideal   $\fm$  of $\TT^{(\kappa)}$ (or of $\TT_n^{(\kappa)}$) is called Eisenstein if the corresponding $(\TT_{\fm}^{(\kappa)}/\fm)$-valued  pseudo-representatation  of $\rG_F$ is the sum of two $(\TT_{\fm}^{(\kappa)}/\fm)$-valued characters. 
\end{defi}

We now prove that in  the non-Eisenstein case it suffices to consider  the cuspidal Hecke algebra. 

\begin{prop}\label{prop:loc-eis}
The localisation of the natural surjection $\TT_n^{(\kappa)}\twoheadrightarrow \TT^{(\kappa)}_{n,\mathrm{cusp}}$ at any non-Eisenstein maximal ideal $\fm$ of $\TT_n^{(\kappa)}$ is an isomorphism. 
\end{prop}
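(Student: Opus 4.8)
The plan is to compare the full space of modular forms with the cuspidal subspace through the Hecke-equivariant short exact sequence they cut out, and to show that the resulting ``Eisenstein quotient'' is supported only at Eisenstein maximal ideals. Write $\TT=\TT_n^{(\kappa)}$, $M=M_\kappa(\fn,\varpi^{-n}\cO/\cO)$, $S=S_\kappa(\fn,\varpi^{-n}\cO/\cO)$, and $E=M/S$, so that there is a short exact sequence of $\TT$-modules
\[ 0 \to S \to M \to E \to 0, \]
equivariant for all $T_\fq$ and $\diam{\fq}$ with $\fq\nmid\fn p$. By definition $\TT$ acts faithfully on $M$, and $\TT^{(\kappa)}_{\cusp,n}$ is its image in $\End_\cO(S)$, the surjection being restriction to $S$. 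The goal is to show $E_\fm=0$ for $\fm$ non-Eisenstein, from which the statement will follow formally.

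First I would record the commutative-algebra reduction. Since $\TT$ is Artinian and semilocal, it decomposes as the finite product $\TT=\prod_{\fm'}\TT_{\fm'}$ over its maximal ideals, and every finitely generated $\TT$-module splits accordingly; in particular the sequence above decomposes into its $\fm$-components, and one has embeddings $\TT_\fm\hookrightarrow\End_\cO(M_\fm)$ and $(\TT^{(\kappa)}_{\cusp,n})_\fm\hookrightarrow\End_\cO(S_\fm)$, the localized surjection being restriction along $S_\fm\subseteq M_\fm$. Hence, once I know $E_\fm=0$, i.e.\ $S_\fm=M_\fm$, the restriction map $\End_\cO(M_\fm)\to\End_\cO(S_\fm)$ is the identity, so $\TT_\fm\to(\TT^{(\kappa)}_{\cusp,n})_\fm$ is injective as well as surjective, hence an isomorphism. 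Everything therefore reduces to the claim that every maximal ideal $\fm$ in the support of $E$ is Eisenstein.

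To prove that claim I would argue via the constant terms at the cusps. By the $q$-expansion principle a form lies in $S$ exactly when its constant terms $a((0),-)$ vanish at every cusp $\infty_\fc$ ($\fc\in\cC$), so the total constant-term map realizes $E$ as a $\TT$-submodule of a finite direct sum of copies of $\varpi^{-n}\cO/\cO$ indexed by the cusps, on which the Hecke action is diagonalized by the Eisenstein eigenvalues: there are ray-class (equivalently, via class field theory, Galois) characters $\bar\eta_1,\bar\eta_2$ of $\rG_F$, unramified outside $\fn p\infty$, so that on this boundary space $T_\fq$ acts by $\bar\eta_1(\Frob_\fq)+\bar\eta_2(\Frob_\fq)$ and $\diam{\fq}$ by $\bar\eta_1(\Frob_\fq)\bar\eta_2(\Frob_\fq)$. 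Thus for $\fm\in\mathrm{Supp}\,E$ the residual eigenvalues satisfy
\[ T_\fq \equiv \bar\eta_1(\Frob_\fq)+\bar\eta_2(\Frob_\fq), \qquad \diam{\fq}\equiv\bar\eta_1(\Frob_\fq)\bar\eta_2(\Frob_\fq) \pmod{\fm}, \]
so the residual pseudo-representation $\overline{P}_\fm=(\overline{T}_\fm,\overline{D}_\fm)$ attached to $\fm$ (as in Lemma~\ref{lem:exps}) equals the sum of the two characters $\bar\eta_1,\bar\eta_2$, meaning $\fm$ is Eisenstein. Contrapositively, a non-Eisenstein $\fm$ is not in $\mathrm{Supp}\,E$, whence $E_\fm=0$ and the result follows from the previous paragraph. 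The main obstacle is precisely this last claim: pinning down that the Hecke action on the boundary of Hilbert modular forms is governed by a pair of characters whose sum and product are the residual trace and determinant, in a way compatible with the geometric normalization of $T_\fq$ and $\diam{\fq}$ used throughout \S\ref{sec:doubling}; the surrounding commutative algebra is then routine.
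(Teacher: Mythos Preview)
Your reduction to showing $E_\fm=0$ is exactly the paper's first step, and your commutative-algebra paragraph is fine. The divergence is in how you justify that every $\fm$ in the support of $E$ is Eisenstein, and there you leave a genuine gap that you yourself flag. The assertion that ``there are ray-class characters $\bar\eta_1,\bar\eta_2$ so that on this boundary space $T_\fq$ acts by $\bar\eta_1(\Frob_\fq)+\bar\eta_2(\Frob_\fq)$'' is not literally true: the boundary is not a single Hecke eigenspace but decomposes into several, one for each pair of characters arising. More seriously, the Hecke operators $T_\fq$ genuinely permute the cusps (through the narrow class group) rather than acting diagonally on the individual constant terms, so extracting the character description from your embedding into $\bigoplus_\fc \varpi^{-n}\cO/\cO$ requires real work -- essentially the computation of Hecke eigenvalues of Eisenstein series, done directly in the torsion setting.

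The paper sidesteps this entirely by a lifting trick you do not use. It first multiplies by a power of the Hasse invariant (Lemma~\ref{lem:Hasse}) to pass to a weight $\kappa$ large enough that $M_\kappa(\fn,\cO/\varpi^n)=M_\kappa(\fn,\cO)\otimes_\cO\cO/\varpi^n$ (Lemma~\ref{lem:lifting}); this is Hecke-equivariant away from~$p$ and does not change the maximal ideal $\fm$. Then $E$ becomes a quotient of $M_\kappa(\fn,\cO)/S_\kappa(\fn,\cO)$, which embeds Hecke-equivariantly into $M_\kappa(\fn,\CC)/S_\kappa(\fn,\CC)$, and the latter is classically known to be spanned by Eisenstein series with reducible Galois representations. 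This is cleaner because it reduces your ``main obstacle'' to a standard fact over~$\CC$, whereas your route would have to reprove that fact in finite characteristic, with the additional hazard that in low weight (in particular $\kappa=1$) torsion forms need not lift to characteristic zero at all.
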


\begin{proof}
It suffices to prove that the localisation of $M_\kappa(\fn,\cO/\varpi^n)/S_\kappa(\fn,\cO/\varpi^n)$ at a non-Eisenstein ideal vanishes.
By multiplication by a suitable power of $h_n$ which does not vanish at any cusp  (see Lemma \ref{lem:Hasse}),
 we can assume that $\kappa$ is sufficiently large so that Lemma~\ref{lem:lifting} applies yielding 
  $M_\kappa(\fn,\cO/\varpi^n)=M_\kappa (\fn,\cO) \otimes_\cO (\cO/\varpi^n)$. Hence the natural Hecke equivariant morphism 
\[M_\kappa(\fn,\cO)/S_\kappa(\fn,\cO) \to M_\kappa(\fn,\cO/\varpi^n)/S_\kappa(\fn,\cO/\varpi^n)\]
is surjective. The former, however, can be Hecke equivariantly embedded into $M_\kappa(\fn,\CC)/S_\kappa(\fn,\CC)$ which is 
well known to be generated by Eisenstein series whose Galois representations are reducible. This proves the proposition.
\end{proof}

Henceforth we assume $\fm$ to be a non-Eisenstein ideal of $\TT^{(1)}$, so that the corresponding 
residual Galois representation $\rhobar_\fm$ is absolutely irreducible. 
Therefore, by combining Theorem~\ref{thm:main} with a result of Chenevier (\cite[Thm. 2.22]{C}), we get a  representation 
\[\rho_\fm : \rG_F \to \GL_2(\TT^{(1)}_\fm),\]
unramified outside $ \fn p \infty $, and  uniquely characterised  by the property that for all primes $\fq \nmid \fn p$ one has  $\tr(\rho_\fm(\Frob_\fq)) = T_\fq$ and $\det(\rho_\fm(\Frob_\fq))=\langle \fq \rangle$.   
By combining Theorem~\ref{thm:main} with Proposition~\ref{prop:loc-eis}, we deduce that the pseudo-representation $P^{(1)}_{\fm}$
is unramified at all primes $\fp \mid p$ and,  by the discussion after Definition~\ref{pseudodef}, we conclude that 
$\rho_{\fm}$ is unramified at these primes as well. 
Let $S$ be the set of places of $F$ dividing $\fn\infty$ and let $R^S_{F,\rhobar_\fm}$ be the universal deformation ring of $\rhobar_\fm$ unramified outside $S$ in the category of complete Noetherian local $\cO$-algebras with residue field $\FF$. 
Hence $\rho_{\fm}$ induces an $\cO$-algebra homomorphism $R^S_{F,\rhobar_\fm} \rightarrow \TT^{(1)}_\fm$.

As $\TT^{(1)}$ is generated by $T_\fq$ and $\langle \fq \rangle$ for $\fq \nmid \fn p$ as an $\cO$-algebra, we obtain the following result. 

\begin{cor}\label{cor:RT}
There exists a surjective homomorphism $R^S_{F,\rhobar_\fm} \twoheadrightarrow \TT^{(1)}_\fm$ of $\cO$-algebras.
\end{cor}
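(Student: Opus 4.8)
The plan is to exhibit the representation $\rho_\fm$ (furnished just before the statement from Theorem~\ref{thm:main} and Nyssen~\cite{nyssen}) as a deformation of $\rhobar_\fm$ that is \emph{unramified outside $S$}, so that it is classified by $R^S_{F,\rhobar_\fm}$, and then to deduce surjectivity from the fact that $\TT^{(1)}_\fm$ is topologically generated over $\cO$ by Hecke operators lying in the image of the classifying map. Since $\fm$ is non-Eisenstein, $\rhobar_\fm$ is absolutely irreducible, which is what makes both the passage from pseudo-representation to genuine representation and the control of ramification work.

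The key point, and the only genuine input rather than a formal step, is unramifiedness at the primes above $p$. Recall that $\rho_\fm$ satisfies $\tr(\rho_\fm(\Frob_\fq)) = T_\fq$ and $\det(\rho_\fm(\Frob_\fq)) = \langle \fq \rangle$ for $\fq \nmid \fn p$ and is unramified outside $\fn p\infty$, and that it arises from the $\TT^{(1)}_\fm$-valued pseudo-representation $P^{(1)}_\fm$ of Corollary~\ref{cor:ex}. Because $\fm$ is non-Eisenstein, Proposition~\ref{prop:loc-eis} identifies $\TT^{(1)}_\fm$ with its cuspidal quotient $\TT^{(1)}_{\cusp,\fm}$, so that $P^{(1)}_\fm$ coincides with the localisation at $\fm$ of the cuspidal pseudo-representation $P^{(1)}_\cusp$. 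By the second (cuspidal) assertion of Theorem~\ref{thm:main}, $P^{(1)}_\cusp$, and hence $P^{(1)}_\fm$, is unramified at every $\fp \mid p$ with no restriction on $e_\fp$. Since $\rhobar_\fm$ is absolutely irreducible, unramifiedness of the pseudo-representation at $\fp$ is equivalent to unramifiedness of $\rho_\fm$ at $\fp$ (see the remark following the definition of a pseudo-representation). Thus $\rho_\fm$ is unramified at all $\fp \mid p$, and therefore unramified outside $S$.

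It then follows that $\rho_\fm$ is a deformation of $\rhobar_\fm$ to the complete Noetherian local $\cO$-algebra $\TT^{(1)}_\fm$ (with residue field $\FF$) that is unramified outside $S$. The universal property of $R^S_{F,\rhobar_\fm}$ yields a unique $\cO$-algebra homomorphism $\varphi : R^S_{F,\rhobar_\fm} \to \TT^{(1)}_\fm$ through which $\rho_\fm$ factors, sending $\tr(\rho^{\univ}(\Frob_\fq)) \mapsto T_\fq$ and $\det(\rho^{\univ}(\Frob_\fq)) \mapsto \langle \fq \rangle$ for all $\fq \nmid \fn p$. For surjectivity I would note that $\TT^{(1)}_\fm$ is finite over $\cO$ (it embeds in $\End_\cO$ of a cofinitely generated $\cO$-module, hence in a finite $\cO$-algebra) and, by the very definition of $\TT^{(1)}$, is topologically generated over $\cO$ by the operators $\{T_\fq, \langle \fq \rangle : \fq \nmid \fn p\}$. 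All these generators lie in $\Image(\varphi)$, which is a finitely generated $\cO$-submodule of $\TT^{(1)}_\fm$ and hence closed; it therefore contains the closure of the subalgebra they generate, namely all of $\TT^{(1)}_\fm$, so $\varphi$ is surjective.

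I do not expect a serious obstacle here: the substance is entirely contained in Theorem~\ref{thm:main}. The one step deserving care is the reduction to the cuspidal algebra via Proposition~\ref{prop:loc-eis}, since it is precisely this that lets me invoke the stronger cuspidal unramifiedness statement and obtain the conclusion at every $\fp \mid p$ without assuming $p-1 \nmid e_\fp$; everything after that is formal deformation theory together with topological generation of $\TT^{(1)}_\fm$ by the Hecke operators.
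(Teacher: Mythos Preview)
Your proposal is correct and follows essentially the same approach as the paper's proof: combine Theorem~\ref{thm:main} with Proposition~\ref{prop:loc-eis} to obtain unramifiedness at all $\fp\mid p$ (via the cuspidal identification in the non-Eisenstein case), then deduce surjectivity from the fact that $\TT^{(1)}$ is generated over~$\cO$ by the $T_\fq$ and $\diam{\fq}$ for $\fq\nmid\fn p$. One minor point: your justification of surjectivity via finiteness of $\TT^{(1)}_\fm$ over~$\cO$ is stronger than what is established in the paper; it suffices (and is cleaner) to invoke the standard fact that a local homomorphism of complete Noetherian local $\cO$-algebras is surjective once the topological generators lie in the image.
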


\bibliography{References}
\bibliographystyle{siam}

\end{document}